\newtheorem{theorem}{Theorem}[section]
\newtheorem{lemma}{Lemma}[section]
\newtheorem{remark}{Remark}[section]
\newtheorem{proposition}{Proposition}[section]
\numberwithin{equation}{section}
\theoremstyle{definition}
\numberwithin{equation}{section}
\newcommand{\subjclass}[1]{\bigskip\noindent\emph{2010 Mathematics Subject Classification:}\enspace#1}
\newcommand{\keywords}[1]{\noindent\emph{Keywords:}\enspace#1}
\begin{document}


\baselineskip=17pt


\title{Well-posedness of non-isentropic Euler equations with physical vacuum}

\author{Yongcai Geng\\
\small {Department of Mathematics, Shanghai Institute of Technology,
Shanghai 201418, P. R. China.}\\
\small {yongcaigeng@alumni.sjtu.edu.cn.}\\[5pt]
{Yachun Li}\\
{\small{School of Mathematical Sciences, MOE-LSC, and SHL-MAC,} }\\{ \small{Shanghai Jiao Tong
University, Shanghai 200240, P. R. China.}}\\
{\small {ycli@sjtu.edu.cn}}\\[5pt]
Dehua Wang\\
\small{Department of Mathematics, University of Pittsburgh,
Pittsburgh, PA 15260, U.S.A.}\\
\small {dwang@math.pitt.edu.}\\[5pt]
Runzhang Xu\\
\small{College of Mathematical Sciences, Harbin Engineering University, Harbin 150001, P. R.  China.}\\
\small {xurunzh@hrbeu.edu.cn.}}

\date{}

\maketitle


\begin{abstract}
We consider the local well-posedness of the one-dimensional non-isentropic compressible Euler equations with moving physical vacuum boundary condition. The physical vacuum singularity requires the sound speed to be scaled as the square root of the distance to the vacuum boundary. The main difficulty lies in the fact that the system of hyperbolic conservation laws becomes characteristic and degenerate at the vacuum boundary.
Our proof is based on an approximation of the Euler equations by a degenerate parabolic
regularization obtained from a specific choice of a degenerate artificial viscosity
term. Then we construct the solutions to this degenerate parabolic problem and
 establish the  estimates that are uniform with respect to the artificial viscosity parameter. Solutions to the compressible Euler equations are obtained as  the limit of the vanishing artificial viscosity.
 Different from the isentropic case \cite{Coutand4, Lei}, our momentum equation of conservation laws   has an extra term $p_{S}S_\eta$ that  leads to   some extra  terms in the energy function  and causes more difficulties even for the case of $\gamma=2$.  Moreover, we deal with this free boundary problem starting from the general cases of $2\leq\gamma<3$ and $1<\gamma<2 $ instead of only emphasizing the isentropic case of $\gamma=2$ in \cite{Coutand4, jang1, Lei}.

\subjclass{Primary 35B40, 35A05, 76Y05; Secondary 35B35, 35L65.}

\keywords{Non-isentropic Euler equations, vacuum boundary, free boundary problem,  local well-posedness.}
\end{abstract}

\section{Introduction}
We are concerned with the one-dimensional compressible flow moving inside  a dynamic vacuum boundary governed by the following non-isentropic Euler equations with initial and free boundary conditions:
\begin{equation}
\label{E:1.1}
\begin{cases} 
\rho_t+(\rho u)_\eta=0, & \text{in}\ \ I(t),\\
(\rho u)_t+(\rho u^2+p)_\eta=0, & \text{in}\ \ I(t),\\
S_t+uS_\eta=0, & \text{in}\ \ I(t),\\
(\rho,u,S)|_{t=0}=\big(\rho_0(\eta),u_0(\eta),S_0(\eta)\big), & \text{on}\ \  I(0),\\
\rho=0, & \text{on}\ \   \Gamma(t),\\
\end{cases}
\end{equation}
where $t>0$ is the time variable, $\eta$ is the space variable; the open bounded interval $I(t)\subset \mathbf{R}$ denotes the evolving domain occupied by the gas, and
$I(0)=I=\{\eta\in\mathbf{R}: 0<\eta<1\}$ is the initial spatial domain;
{$\Gamma(t):=\partial I(t)$ denotes the vacuum boundary
that moves with the fluid velocity;} $u$ represents the Eulerian velocity,
and $ \rho, S$ stand for the density and the entropy of the gas, respectively;  $p=p(\rho, S)$ is the pressure satisfying
the equation of state
\begin{equation}
\label{E:1.2}
p=C_\gamma\rho^\gamma e^{S}, \quad 1<\gamma<{3},
\end{equation}
where $C_\gamma$ is the adiabatic constant that is set to 1 in this paper, and $\gamma$ is the adiabatic gas exponent.
The density $\rho$ satisfies the following conditions:
\begin{equation}
\label{E:1.3}
\rho(\eta,t)>0 \ \ \text{in} \ \text{I(t)}\ \ \ \text{and}\ \ \rho(\eta,t)=0 \ \ \text{on} \ \ \Gamma(t).
\end{equation}
The equations  $\eqref{E:1.1}_1-\eqref{E:1.1}_3$ are the conservation laws of mass,  momentum, and energy, respectively;
$\eqref{E:1.1}_4$ is the initial condition  for the density, velocity, and entropy;
and the boundary condition $\eqref{E:1.1}_5$ states
that the density must vanish along the vacuum boundary.


To understand the behavior of a solution near the vacuum $\rho=0$,  similarly to \cite{liu} we take
$$c^2=p_\rho(\rho,S)=\gamma \rho^{\gamma-1}e^S,$$
and rewrite $\eqref{E:1.1}_1-\eqref{E:1.1}_2$ in terms of $u$ and $c$ as
\begin{equation}\begin{cases}\label{E:1.4}\displaystyle
(c^2)_t+(\gamma-1)c^2u_\eta e^{-S}+u(c^2)_\eta=0, \ \ \ \ \ \ \ \,\\[2pt]
\displaystyle
u_t+uu_\eta+\frac{1}{\gamma-1}(c^2)_\eta +\frac{c^2}{\gamma} S_\eta=0. \ \
\end{cases}\end{equation}
The trajectory of the free boundary
$$
\Gamma(\eta,t)=cl\left\{(\eta,t): \rho(\eta,t)> 0\right\}\cap cl\left\{(\eta,t): \rho(\eta,t)= 0\right\}
$$
coincides with the gas particle path
$$
\frac{d\eta(t)}{dt}=u(\eta(t),t).
$$
Thus, $\eqref{E:1.4}_2$ on $\Gamma(\eta,t) $ becomes
$$
\frac{d u}{dt}=-\frac{1}{\gamma-1}(c^2)_\eta .
$$
Generally, the acceleration $du/dt$ of $\Gamma(\eta,t)$ would be finite,    hence  we have
\begin{equation}
\label{E:1.6}
0<\Big|\frac{\partial c^2(\Gamma(\eta,t),t)}{\partial \eta}\Big|<\infty,\ \ \ \ \text{on} \ \ \Gamma(\eta,t),
\end{equation}
which defines a physical vacuum boundary condition (or singularity).  Since $\rho_0>0$ in $I$, this condition implies
that for some positive constant $C$ and $\eta\in I$ near the vacuum boundary $\Gamma=\partial I,$
\begin{equation}
\label{E:1.7}
\rho_0^{\gamma-1}\geq  C\text{dist}(\eta,\Gamma).
\end{equation}
Equivalently, the physical vacuum condition \eqref{E:1.6} implies that for some $\kappa>0$,
\begin{equation}\label{E:1.8}
0<C_\kappa\le\Big|\frac{\partial \rho_0^{\gamma-1}(\eta)}{\partial \eta}\Big|<\infty, \quad \ \ \text{if}\   \text{dist}(\eta,\partial I)\leq \kappa,
\end{equation}
and  
\begin{equation}\label{E:1.9}
\rho_0^{\gamma-1}(\eta)\geq C_\kappa>0,   \quad \ \ \ \text{if}\ \   \text{dist}(\eta,\partial I)\geq \kappa,
\end{equation}
for a constant $C_\kappa$ depending on $\kappa$.

The mathematical analysis of vacuum states   dates back to Lin \cite{Lin} and Liu-Smoller \cite{liu1} for the isentropic gas
dynamics. The definition of physical vacuum was motivated by the study of the Euler equations with damping
in \cite{liu, liu2}, and we refer the reader to  \cite{liu, luo, Lind, HLarma, HaoW, makino} for more discussions.
%
{ At the vacuum boundary, the hyperbolic system of Euler equations becomes degenerate and the characteristic speeds are singular \cite{liu2},
then the classical theory of hyperbolic systems does not apply, thus even the local existence   of smooth solutions with the physical
vacuum boundary is still largely open. }
%
%
When the data are compactly supported, there are three ways to study the problem.
The first approach consists of solving the compressible Euler system in the whole space and it requires that the system
$\eqref{E:1.1}_1$-$\eqref{E:1.1}_2$ holds in the sense of distribution for all $x\in\mathbf{R}^d$ and $t\in[0,T]$, which
is  the strategy   to construct global weak solutions in \cite{chen,Diperna,Lions}.
The second consists of symmetrizing  the system first and then obtaining the local existence by the theory of symmetric
hyperbolic system  
in the whole space (see for instances \cite{Kato, amj, tms1}
for classical systems and \cite{Geng, Phlippe}  for relativistic systems).

This paper is concerned with the the third one which consists of  requiring
the Euler equations to hold on the set $\{(x,t):\rho(x,t)>0\}$ and writing  an equation for the vacuum boundary
$\Gamma$ that is a free boundary. Here the vacuum boundary is  part of the unknowns. In this case,
an appropriate boundary condition at vacuum is necessary.
{
Suppose that the origin is the initial vacuum contact point and the sound speed $c$ behaves like  $c\sim |x|^h$.
If $h\geq 1$,  the initial contact with vacuum is sufficiently smooth, the local solution  to the Euler equations was obtained in \cite{liu3};
if $0 < h < 1$, the initial contact with vacuum is  H\"{o}lder continuous,  the physical vacuum corresponds to $h=\frac{1}{2}$
(c.f. \cite{Coutand1,Coutand4,Coutand5,jang1}), and the boundary behavior seems ill-posed \cite{jang3}
 when $0 < h < \frac{1}{2}$ and $\frac{1}{2}<h<1$; see \cite{jang2,jang3} for more discussions.
The case $h=0$ occurs in the regime of no continuous initial contact with vacuum.
In this case, a Cauchy problem  can be considered, for example,  the Riemann problem was studied
for the genuinely discontinuous initial data in \cite{Bouchut, Greenspan};
and  a free boundary problem can also be studied, such as   \cite{Lin} for  the positive
density at the vacuum boundary.
}
%
In a series of papers \cite{jang1,jang2,jang3,jang4,jang5} Jang-Masmoudi  gave a rigorous and detailed proof to the existence theory with  physical vacuum boundary.
For the one-dimensional isentropic compressible
gas with  physical vacuum boundary condition, to overcome the degeneracy
difficulty  of propagation speed, in \cite{jang1} they proposed  a new formulation such that some energy
estimates can be closed in the appropriate space, and moreover, they proved that the vacuum boundary behavior
is preserved on some short time interval.
They also investigated the multi-dimensional compressible gas flow with vacuum boundary \cite{jang2}. In \cite{jang4},
 results on free boundary problems were reviewed  and some related open problems were proposed.
Meanwhile, they extended their research to the compressible Navier-Stokes-Poisson system of spherically symmetric   isentropic flows,
and established the local-in-time well-posedness of strong solutions to the vacuum free boundary
problem \cite{jang5}.
Countand-Shkoller \cite{Coutand4,Coutand5} also did many works on this free boundary problem.
In \cite{Coutand4}, they adopted a different approach from Jiang-Masmoudi \cite{jang1} viewing the initial density function $\rho_0$
 as a parameter, thus the isentropic compressible
system becomes a momentum conservation equation, then they used the  vanishing viscosity method to
establish the local existence to the isentropic one-dimensional compressible flow with physical
vacuum. Recently, they also established the {\it a priori} estimates for the free boundary problem of the three-dimensional  compressible Euler equations
\cite{Coutand5}. This technique proposed by
Countand-Shkoller in \cite{Coutand4} has been applied to many other
systems of degenerate and characteristic hyperbolic systems of conservation laws. For example,
Luo-Xin-Zeng$ $ \cite{luo2} and Gu-Lei \cite{Lei} recently extended this method  to the
spherically symmetric system and one-dimensional compressible Euler-Poisson equations with moving
 physical vacuum boundary. Besides the
local existence theory of physical vacuum states, there are also some works on the long time behavior.
Based on self-similar behavior, via Darcy's law, Liu conjectured
\cite{liu} that solutions of Euler equations with damping should behave asymptotically like the solutions of the porous media equation. This problem was  studied by Huang, Marcati and Pan \cite{Huang} in the framework of   entropy solutions
by the method of compensated compactness 
in $L^\infty$, 
and later  by Luo and Zeng  \cite{luo1} by tracking the vacuum boundary.

In this paper, we  deal with the non-isentropic compressible Euler
equations with physical vacuum. Compared with the results already obtained in \cite{Coutand4, Lei, jang1},
there are three novel features.  The first new point is that our momentum equation
has an extra term $p_SS_{\eta}$, because the pressure function depends on not only
the density function $\rho({\eta},t)$ but also the entropy function $S({\eta},t)$ that is equal to $S_0({\eta})$, here $S_0({\eta})$ stands for the initial entropy function (in Lagrangian coordinate, the entropy function satisfies $S_t=0$).
To overcome this difficulty, in the energy function we will introduce more terms like $\sum\limits_{j=1}^2\big|\big|\omega^{2+\mu}\partial_t^{4-2j}\partial_x^{j+2}v(\cdot,t)
\big|\big|_0^2,$ $\big|\big|\omega^{1/2+\mu}\partial_tv'(\cdot,t)
\big|\big|_0^2$ and add a new artificial viscous term $\epsilon(\omega^{2+2\mu}v'e^{S_0})'$, with $'=\frac{\partial }{\partial x}$,
compared with the isentropic case. To close the energy estimates, besides the similar estimates to the isentropic case, we also need to deal with these additional
terms and the term $\epsilon(\omega^{2+2\mu}e^{S_0})'v'$ from the artificial viscous term.
In fact, even for the case of $\gamma=2$, the energy function also has
  more terms  than  the isentropic case.

The second new feature is that we will treat  general $\gamma$ ($2\leq \gamma<3$ and $1<\gamma<2$) from the beginning instead of only emphasizing
the isentropic case $\gamma=2$ as in \cite{Coutand4, jang1, Lei}. Thus, we will face  some new mathematical difficulties. For example, when $2\leq\gamma<3$, we will deal with $\Big|\Big|\frac{1}{\omega^{1/2-\mu}}\Big|\Big|_{L^\beta}$ (c.f  \eqref{E:5.25dda}) instead of $\Big|\Big|\frac{1}{\rho_0^{1/2}}\Big|\Big|_{L^{\beta'}}$ (c.f.  (6.33) in \cite{Coutand4} and (6.45) in \cite{Lei}) for the case of $\gamma=2$ in the process of energy estimates, where determining the proper  value  for $\beta$ is   technically  more difficult than that for $\beta'$
due to $-\frac{1}{4}<\mu\leq0$.

 The third new feature comes from the case
$1<\gamma<2$, since the momentum equation
can be written as the following equation with the distance function $\omega=\rho_0^{\gamma-1}$ as a parameter:
$$
\omega^{\frac{1}{\gamma-1}} v_t+\left(\frac{\rho^\gamma_0e^{S_0}}{(\eta')^\gamma}\right)_x=0,
$$
thus  we know that the coefficient of $v_t$ will degenerate fast as $\gamma\rightarrow1$
near the vacuum boundary. To obtain the $H^2$ norm of $v$ (and thus the $C^1 $norm of $v$) for small $\gamma-1$,
 from the Hardy type embedding inequality \eqref{E:3.1a}, the higher energy function $\tilde{E}(t)$
defined in \eqref{E:2.9b} for $1<\gamma<2$ implies that
$$
||v||_2^2\leq ||v||^2_{\frac{l+1}{2}-\left(\frac{1}{2}+\mu\right)}\leq C\sum_{i=0}^{\frac{l+1}{2}}
\big\vert\big\vert\omega^{1/2+\mu}\partial_x ^i v\big\vert\big\vert_0^2\leq C\tilde{E},\quad  l=3+2\lceil
\frac{1}{2}+\frac{1}{\gamma-1}\rceil,
$$
 indicating the order of derivatives $l\rightarrow \infty$ as $\gamma\rightarrow1$.
 For example, different from the case of $2<\gamma<3$, the estimates of higher order spatial derivatives for the case $\gamma=\frac{3}{2}$ are much more difficult,  for which we use \eqref{E:5.22ab} to close the energy estimates and the details can be found in  \eqref{E:7.19}-\eqref{E:7.30ab}. Finally, we also present the proof of uniqueness for the general case $1<\gamma<3$.

The rest of the paper is organized as follows.
In Section 2, we introduce the Lagrangian coordinates to transform
the free boundary problem to a fixed boundary problem,
and we provide some useful inequalities including the Sobolev embedding inequalities  and
state our main result.  In Section 3, we first present a degenerate  parabolic approximation with viscosity $\varepsilon$
to the compressible Euler equations, then use a fixed point theorem to solve this  approximate problem.
In Section 4 and Section 5, we will prove some uniform estimates  independent of $\varepsilon$ for $2\leq \gamma<3$ and $1<\gamma<2$ respectively. Then we take the limit as $\varepsilon\rightarrow 0$ to obtain the solution of the compressible Euler equations and hence establish  the local existence theorem.
In Section 6, we will prove the main result, i.e., Theorem \ref{T:1.1}.
\bigskip

\section{Preliminaries and main result}

In this section, we provide some preliminaries and state the main result.

\subsection{Lagrangian formulation} 
System \eqref{E:1.1}  is in the Eulerian coordinates $(\eta,t)$. We first rewrite it  in the Lagrangian variables $(x,t)$.
Let $\eta(x,t)$ denote the position of the gas particle $x$
 at time $t$,  then
$$
\partial_t\eta=u(\eta(x,t),t)\ \ \text{for}\ t>0\ \text{and}\ \ \eta(x,0)=x.
$$
Set the Lagrangian velocity,    Lagrangian density  and entropy in the  following:
$$
v(x,t)=u(\eta(x,t),t),\quad f(x,t)=\rho(\eta(x,t),t),\quad \tilde{S}(x,t)=S(\eta(x,t),t).
$$
Then
\begin{equation}\label{E:2.3a}\begin{split}
 v_t &=u_\eta \eta_t+u_t=u_\eta u +u_t,\quad v_x=u_\eta \eta_x,\\
f_t &=\rho_\eta \eta_t+\rho_t=\rho_\eta u+\rho_t,\quad f_x=\rho_\eta \eta_x,\\
\tilde{S}_t &=S_\eta \eta_t+S_t=S_\eta u+S_t,\quad \tilde{S}_x=S_\eta \eta_x.\end{split}\end{equation}
Using \eqref{E:2.3a}, the Lagrangian version of equations $\eqref{E:1.1}_1-\eqref{E:1.1}_3$ can be written on the fixed reference domain $I$ as
\begin{equation}\label{E:2.3}\begin{cases}\displaystyle
f_t+fv_x/\eta_x=0,\\[2pt]
\displaystyle
f v_t+(f^\gamma\exp \tilde{S})_x/\eta_x=0,\\[2pt]
\displaystyle
\tilde{S}_t=0.\\[2pt]
\displaystyle
(f,v,\tilde{S},\eta)|_{t=0}=(\rho_0,u_0,S_0,x).
\end{cases}\end{equation}

From  $\eqref{E:2.3}_3$ we  have 
\begin{equation}\label{E:2.3b}
\tilde{S}(x,t)\equiv S_0(x),
\end{equation}
then \eqref{E:2.3} is reduced to
\begin{equation}\label{E:2.4}\begin{cases}\displaystyle
f_t+fv_x/\eta_x=0,\\[2pt]
\displaystyle
f v_t+(f^\gamma e^{S_0})_x/\eta_x=0,\\[2pt]
\displaystyle
(f,v,\eta)|_{t=0}=(\rho_0,u_0,x).
\end{cases}\end{equation}
From $\eqref{E:2.4}_1$, we know that
$$
(f\eta_x)_t=f_t\eta_x+f\eta_{xt}=f_t\eta_x+fv_x=0,
$$
which implies
$$
f=\rho(\eta(x,t),t)=\rho_0/\eta_x.
$$
Hence, the initial density function $\rho_0$ can be viewed as a parameter in the Euler equations. Thus,
problem \eqref{E:2.4} can be rewritten as
\begin{equation} \label{E:2.7}
\begin{cases}\displaystyle
\rho_0v_t+\left(\frac{\rho_0^\gamma e^{S_0}}{\eta^\gamma_x}\right)_x=0,  &\text{in} \ \ I\times(0,T],\\
\displaystyle
(\eta,v)=(x,u_0),&\text{in}\ \ I\times\{t=0\},\\
\displaystyle
\rho_0^{\gamma-1}=0,& \text{on}\ \ \Gamma,
\end{cases}\end{equation}
with $\rho_0^{\gamma-1}\geq C\text{dist}(x,\Gamma)$ for $x$ near $\Gamma.$ In the following,  we adopt
the notation
\begin{equation} \label{E:2.7a}\begin{split}\omega=\rho_0^{\gamma-1}.
\end{split}\end{equation}
It is obvious that  $\omega=\rho_0$ for $\gamma=2.$

As  noted above, the initial domain $I\subset \mathbf{R}$ at   $t=0$ is
\begin{equation} \label{E:2.8}
I=(0,1),
\end{equation}
and the initial boundary points are   $\Gamma=\partial I=\{0,1\}$.

\subsection{Embedding and interpolation inequalities}
For integers $k\geq 0,$  let the Sobolev space $H^k(I)$  be the completion of $C^\infty(I)$
under the norm
$$
||u||_k:=\left(\sum_{a\leq k}\int_{I} |D^a u(x)|^2dx\right)^{\frac{1}{2}}.
$$
For a real number  $s\geq 0,$ the Sobolev spaces $H^s(I)$ and the norms $||\cdot||_s$ are defined by interpolation.
We use $H_0^1(I)$
to denote the subspace of $H^1(I)$ consisting of those functions $u(x)$  vanishing at $x=0$ and $x=1.$\\

We denote by $||\cdot||_0$  the $L^2$ norm.
We first review some useful embedding and interpolation inequalities.
For Sobolev spaces, one has
\begin{equation}\label{E:3.2a}
||u||_{L^r}\leq C||u||_{\frac{1}{2}}, \quad 1< r<+\infty.
\end{equation}
We will also   use  the   interpolation inequality \cite{danchin}:
\begin{equation}\label{E:3.3}
||u||_{s}\leq C||u||^{1-\theta}_{s_0}||u||^{\theta}_{s_1},
\end{equation}
with $0\leq s_0\leq s\leq s_1$ and $s=(1-\theta)s_0+\theta s_1$ $(0\leq \theta \leq 1)$. In particular, some useful inequalities in this paper are
\begin{equation}\label{E:3.3ab}
||u||_{\frac{3}{4}}\leq C||u||^{\frac{1}{2}}_{\frac{1}{2}}||u||^{\frac{1}{2}}_{1},\quad
||u||_{\frac{1}{2}}\leq C||u||^{\frac{1}{2}}_{0}||u||^{\frac{1}{2}}_{1}.
\end{equation}
For simplicity, we denote by $||\cdot||_\infty$ the  $L^\infty$ norm, then
\begin{equation}\label{E:3.3a}
||u||_{\infty}\leq C_p||u||_1.
\end{equation}
Using \eqref{E:3.3ab}, one has
\begin{equation}\label{E:3.3b}
||u||_{\infty}\leq C_p||u||_{\frac{3}{4}}\leq C||u||^{1/2}_{1/2}||u||^{1/2}_1
\leq C||u||^{1/4}_{0}||u||^{3/4}_1.
\end{equation}

Let $\omega$ denote the distance function to the boundary $\Gamma$. For any $a>0$ and nonnegative integer $b$, the weighted Sobolev space $H^{a,b}(I)$ is given by
 $$H^{a,b}(I):=\{\omega^{a/2}F\in L^2(I):\int_{I}\omega^a(x)|D^k F(x)|^2dx<+\infty,\ \ 0\leq k\leq b \}
$$
with the norm
$$
||F||^{a,b}(I):=\sum\limits_{k=0}^b\int_{I} \omega^a(x)|D^k F(x)|^2dx.
$$
Then for any $b>\frac{a}2,$ one has the following embedding \cite{Kufner}:
\begin{equation}\label{E:3.1a}
H^{a,b}(I)\hookrightarrow H^{b-a/2}(I).
\end{equation}

For the estimates on the higher order spatial derivatives of $v$, we introduce the following lemma.
\begin{lemma}[Lemma 1 in \cite{Coutand2}] \label{L:3.2}  
Let $\varepsilon>0$ and $g\in L^\infty(0, T; H^s(I))$
be given, and let $f\in H^1(0,T;H^s(I))$ be such that
$$
f+\frac{\varepsilon}{\gamma}f_t=g\ \ \ \ \text{in}\ (0,T)\times I.
$$
Then,
$$
||f||_{L^\infty(0,T;H^s(I))}\leq C\max\{||f(0)||_s,||g||_{L^\infty(0,T;H^s(I))}\}.
$$
\end{lemma}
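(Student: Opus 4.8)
The equation $f+\frac{\varepsilon}{\gamma}f_t=g$ is, for fixed spatial profile, a linear first-order ordinary differential equation in the time variable $t$ with values in the Banach space $H^s(I)$. The plan is therefore to solve it explicitly by the integrating-factor (Duhamel) method and then propagate the estimate through the $H^s$-norm. First I would rewrite the equation as $f_t+\frac{\gamma}{\varepsilon}f=\frac{\gamma}{\varepsilon}g$ and multiply by the integrating factor $e^{\gamma t/\varepsilon}$, so that $\bigl(e^{\gamma t/\varepsilon}f\bigr)_t=\frac{\gamma}{\varepsilon}e^{\gamma t/\varepsilon}g$. Since $f\in H^1(0,T;H^s(I))$ embeds into $C([0,T];H^s(I))$, the initial value $f(0)$ is well defined in $H^s(I)$ and the fundamental theorem of calculus for Bochner--Sobolev functions applies; integrating from $0$ to $t$ yields the representation formula
$$
f(t)=e^{-\gamma t/\varepsilon}f(0)+\frac{\gamma}{\varepsilon}\int_0^t e^{-\gamma(t-\tau)/\varepsilon}\,g(\tau)\,d\tau.
$$

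Next I would take the $H^s(I)$-norm of both sides. Using the triangle inequality together with the Bochner integral estimate $\bigl\|\int_0^t\cdots\,d\tau\bigr\|_s\le\int_0^t\|\cdots\|_s\,d\tau$, and then bounding $\|g(\tau)\|_s\le\|g\|_{L^\infty(0,T;H^s(I))}$, one obtains
$$
\|f(t)\|_s\le e^{-\gamma t/\varepsilon}\|f(0)\|_s+\frac{\gamma}{\varepsilon}\Bigl(\int_0^t e^{-\gamma(t-\tau)/\varepsilon}\,d\tau\Bigr)\|g\|_{L^\infty(0,T;H^s(I))}.
$$
The remaining time integral is elementary, $\frac{\gamma}{\varepsilon}\int_0^t e^{-\gamma(t-\tau)/\varepsilon}\,d\tau=1-e^{-\gamma t/\varepsilon}$, so the two coefficients $e^{-\gamma t/\varepsilon}$ and $1-e^{-\gamma t/\varepsilon}$ are nonnegative and sum to $1$. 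Thus the right-hand side is a convex combination of $\|f(0)\|_s$ and $\|g\|_{L^\infty(0,T;H^s(I))}$, and bounding a convex combination by the larger of its two endpoints gives $\|f(t)\|_s\le\max\{\|f(0)\|_s,\|g\|_{L^\infty(0,T;H^s(I))}\}$ for every $t\in[0,T]$. Taking the supremum over $t$ yields the claim, in fact with constant $C=1$.

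I do not expect any serious obstacle; the only points requiring care are mild regularity matters rather than hard analysis. Specifically, one must justify that the given $f\in H^1(0,T;H^s(I))$ is genuinely the solution produced by the integrating-factor computation (uniqueness for the linear Bochner-valued ODE, which follows from the same Duhamel formula applied to the difference of two solutions with $g=0$ and equal initial data), and that the trace $f(0)$ and the fundamental theorem of calculus are valid in $H^1(0,T;H^s(I))$. Since $s\ge 0$ may be non-integer and $H^s(I)$ is defined by interpolation, I would note explicitly that the argument uses only that $H^s(I)$ is a Banach space for which the triangle inequality and the Bochner integral estimate hold, so the non-integer case is no different from the integer one.
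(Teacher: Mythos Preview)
Your argument is correct and is precisely the standard integrating-factor/Duhamel proof; the convex-combination observation even shows one may take $C=1$. Note, however, that the paper does not supply its own proof of this lemma at all: it is simply quoted as Lemma~1 from \cite{Coutand2}, so there is nothing in the paper to compare against beyond confirming that your approach is the expected one.
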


\subsection{Higher-order energy functions and main result}\label{S:2.2}
To close the energy estimates and state the main theorem, we define the   energy functions
for the two cases of $2\leq\gamma<3$ and $1<\gamma<2$, respectively.
We consider the following higher-order energy functions:\\
{\bf Case I: } $2\leq \gamma<3.$
\begin{equation} \label{E:2.9a}\begin{split}
\hat{E}(t)=&\big\vert\big|\omega^{1+\mu}\partial_t^4v'(\cdot,t)\big\vert\big|^2_0+\big\vert\big|
\omega^{1+\mu}\partial_t^4v(\cdot,t)\big\vert\big|^2_0\\
&+\sum_{j=1}^2\Big\{\big\vert\big|\omega^{3/2+\mu}\partial_t^{5-2j}
\partial_x^{j+1}v(\cdot,t)\big|\big\vert^2_0
+\sum_{i=1}^j\big|\big\vert\omega^{1/2+\mu}
\partial_t^{5-2j}\partial_x^iv(\cdot,t)\big|\big\vert^2_0\Big\}\\
&+\sum_{j=1}^2\Big\{\big\vert\big|\omega^{2+\mu}\partial_t^{4-2j}
\partial_x^{j+2}v(\cdot,t)\big|\big\vert^2_0
+\sum_{i=-1}^j\big|\big\vert\omega^{1+\mu}
\partial_t^{4-2j}\partial_x^{i+1}v(\cdot,t)\big|\big\vert^2_0\Big\},
\end{split}\end{equation}
where $-\frac{1}{4}<\mu=\frac{2-\gamma}{2(\gamma-1)}\leq 0$.\\

\noindent
{\bf Case II: } $1<\gamma< 2.$
\begin{equation} \label{E:2.9b}\begin{split}
\tilde{{E}}(t)=&\big\vert\big|\omega^{1+\mu}\partial_t^lv'(\cdot,t)\big\vert\big|^2_0
+\big\vert\big|\omega^{1+\mu}\partial_t^lv(\cdot,t)\big\vert\big|^2_0\\
&+\sum_{j=1}^{\frac{l+1}{2}}\Big\{\big\vert\big|\omega^{3/2+\mu}\partial_t^{l+1-2j}
\partial_x^{j+1}v(\cdot,t)\big|\big\vert^2_0
+\sum_{i=1}^j\big|\big\vert\omega^{1/2+\mu}
\partial_t^{l+1-2j}\partial_x^iv(\cdot,t)\big|\big\vert^2_0\Big\}\\
&+\sum_{j=1}^{\frac{l-1}{2}}\Big\{\big\vert\big|\omega^{2+\mu}\partial_t^{l-2j}
\partial_x^{j+2}v(\cdot,t)\big|\big\vert^2_0
+\sum_{i=-1}^j\big|\big\vert\omega^{1+\mu}
\partial_t^{l-2j}\partial_x^{i+1}v(\cdot,t)\big|\big\vert^2_0\Big\},
\end{split}\end{equation}
where $$\mu=\frac{2-\gamma}{2(\gamma-1)}\geq 0,\quad l=3+2\lceil1/2+\mu\rceil,$$
and $\lceil\cdot\rceil$ is the following ceiling function of  $q\geq 0$:
$$
\lceil q\rceil:=\min\{m: m\geq q, m\ \text{is}\ \text{an}\ \text{integer}\}.
$$
\begin{remark}\label{R:1.1a}
From the definitions of $\mu, l$ in the {\bf case II}, we find that {$\gamma\rightarrow 1$  implies
$l\rightarrow \infty$}. Thus, for  $1<\gamma<2$, the higher-order derivatives will be needed,  which will cause  technical difficulties
  compared with the case of  $\,\,2\leq \gamma<3.$
\end{remark}

\begin{remark}\label{R:1.1} Even when  $\gamma=2,$ the higher-order energy function in
{\bf Case I} is different from the isentropic case in \cite{Coutand4, Lei}, and the former contains
the latter. In the non-isentropic case, some additional terms like $\sum\limits_{j=1}^2\big\vert\big|\rho_0^{2}\partial_t^{4-2j}
\partial_x^{j+2}v(\cdot,t)\big|\big\vert^2_0, \big|\big|{\rho_0^{1/2}}\partial_t v'\big|\big|_0^2$ will appear due to the additional term $p_SS_\eta$
 even for $\gamma=2$.
\end{remark}
In fact, when $\gamma=2$, the energy function for the isentropic case in \cite{Coutand4, Lei} is  
\begin{equation} \label{E:2.9}\begin{split}
E_v(t)=&\sum_{s=0}^4\big\vert\big|\partial_t^sv(\cdot,t)\big\vert\big|^2_{{2-s/2}}+\sum_{s=0}^2\big\vert\big|\rho_0
\partial_t^{2s}v(\cdot,t)\big\vert\big|^2_{{3-s}}\\
&+\big\vert\big|{\rho^{3/2}_0}\partial_t\partial_x^3 v(\cdot,t)\big\vert\big|^2_{0}+\big\vert\big|\rho^{3/2}_0\partial^3_t\partial^2_x
 v(\cdot,t)\big\vert\big|^2_{0}\\
&+\big\vert\big|{\rho^{1/2}_0}\partial_t\partial_x^2 v(\cdot,t)\big\vert\big|^2_{0}+\big\vert\big|\rho^{1/2}_0\partial^3_t\partial_x
 v(\cdot,t)\big\vert\big|^2_{0}.\\
\end{split}\end{equation}

\noindent
Since $\gamma=2$ implies $\mu=0$ and $\omega=\rho_0$,   the energy function in {\bf Case I} is
\begin{equation} \label{E:2.9ab}\begin{split}
\hat{E}(t)=&\big\vert\big|\rho_0\partial_t^4v'(\cdot,t)\big\vert\big|^2_0+
\big\vert\big|\rho_0\partial_t^4v(\cdot,t)\big\vert\big|^2_0\\
&+\sum_{j=1}^2\Big\{\big\vert\big|\rho_0^{3/2}\partial_t^{5-2j}
\partial_x^{j+1}v(\cdot,t)\big|\big\vert^2_0
+\sum_{i=1}^j\big|\big\vert\rho_0^{1/2}
\partial_t^{5-2j}\partial_x^iv(\cdot,t)\big|\big\vert^2_0\Big\}\\
&+\sum_{j=1}^2\Big\{\big\vert\big|\rho_0^{2}\partial_t^{4-2j}
\partial_x^{j+2}v(\cdot,t)\big|\big\vert^2_0
+\sum_{i=-1}^j\big|\big\vert\rho_0
\partial_t^{4-2j}\partial_x^{i+1}v(\cdot,t)\big|\big\vert^2_0\Big\}.
\end{split}\end{equation}
From the weighted Sobolev embedding inequality \eqref{E:3.1a}, we know that
$$
\big|\big|\rho_0\partial_t^4 v'\big|\big|_0^2\geq ||\partial_t^4 v||^2_{1-1}=||\partial_t^4 v||^2_0,
$$
$$
\sum_{j=1}^2\big\vert\big|\rho_0^{2}\partial_t^{4-2j}
\partial_x^{j+2}v(\cdot,t)\big|\big\vert^2_0\geq\sum_{j=1}^2\big\vert\big|\partial_t^{4-2j}
v(\cdot,t)\big|\big\vert^2_{j+2-2}=\sum_{j=1}^2\big\vert\big|\partial_t^{4-2j}
v(\cdot,t)\big|\big\vert^2_{j},
$$
and
$$
\sum_{j=1}^2\big\vert\big|\rho_0^{3/2}\partial_t^{5-2j}
\partial_x^{j+1}v(\cdot,t)\big|\big\vert^2_0\geq \sum_{j=1}^2\big\vert\big|\partial_t^{5-2j}
v(\cdot,t)\big|\big\vert^2_{j-1/2}.
$$
Thus, we have
$$
\sum_{s=0}^4\vert|\partial_t^sv(\cdot,t)\vert|^2_{{2-s/2}}\leq \hat{E}(t).
$$
When $j=1, 2$,  the first term on the second line in \eqref{E:2.9ab} deduces to
the second line of \eqref{E:2.9}. When $j=2, i=2,3$, the second term in the second line in
\eqref{E:2.9ab} leads to the third line of \eqref{E:2.9}. Comparing \eqref{E:2.9} with
\eqref{E:2.9ab}, the latter has some   additional terms  $\sum_{j=1}^2\big\vert\big|\rho_0^{2}\partial_t^{4-2j}
\partial_x^{j+2}v(\cdot,t)\big|\big\vert^2_0,$ $\big|\big|\omega^{1/2+\mu}\partial_t v'\big|\big|_0^2$
caused by $p_SS_\eta$ in the non-isentropic case. The detailed analysis can be seen from the higher-order spatial derivative estimates in Subsection 4.2.

For simplicity, we introduce
\begin{equation} \label{E:2.10}
E(t)=\begin{cases}\hat{E}(t),\quad 2\leq \gamma<3,\\
\tilde{E}(t),\quad 1<\gamma<2.
\end{cases}
\end{equation}

We now state our main result as follows.
\begin{theorem}\label{T:1.1}
Given the initial data $(\rho_0,u_0, S_0)$ with  $E(0)<\infty, \rho_0(x)>0$ in $I$, if the physical vacuum
condition \eqref{E:1.7}  holds for $\rho_0,$  and
\begin{equation}
\label{E:1.3a}
\underline{S}\leq S_0'(x)\leq \bar{S},
\end{equation}
in $I$ for some positive constants $\underline{S}$ and $\bar{S}$,
there exists a unique solution to \eqref{E:2.7} (and hence \eqref{E:1.1}) on $[0,T]$
for some $T>0$   sufficiently small,  such that,
\begin{equation}\label{E:2.11}
\sup\limits_{t\in[0,T]}{E}(t)\leq P(E(0)), 
\end{equation}
where $P$ is  some polynomial function of its argument. 
\end{theorem}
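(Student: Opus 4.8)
The plan is to construct the solution by a vanishing artificial viscosity method. For each $\varepsilon>0$ I would first replace the degenerate momentum equation in \eqref{E:2.7} by the degenerate parabolic regularization
\begin{equation*}
\rho_0 v_t+\left(\frac{\rho_0^\gamma e^{S_0}}{\eta_x^\gamma}\right)_x=\varepsilon\big(\omega^{2+2\mu}v'e^{S_0}\big)',\qquad \eta(x,t)=x+\int_0^t v(x,\tau)\,d\tau ,
\end{equation*}
keeping the same initial and boundary data. The weight $\omega^{2+2\mu}$ degenerates at $\Gamma$ at exactly the rate compatible with the physical vacuum scaling, so the regularized problem is uniformly parabolic in the interior while remaining consistent with the degeneracy at the boundary. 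Solving this approximate problem is the content of Section 3: freezing the nonlinear coefficient $\eta_x^{-\gamma}$ at a given $\bar{\eta}$ yields a linear degenerate parabolic equation for $v$ whose solvability follows from the theory of such equations, and one then shows that the solution map $\bar{\eta}\mapsto\eta$ is a contraction on a small time interval and invokes a fixed-point theorem to obtain a unique smooth solution $v^\varepsilon$ on some $[0,T_\varepsilon]$.

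The heart of the argument, carried out in Sections 4 and 5, is to establish a priori estimates for the energy $E^\varepsilon(t)$ of the approximate solution (that is, $E(t)$ evaluated along $v^\varepsilon$, together with the viscous contributions) that are \emph{uniform in} $\varepsilon$, which in particular forces a common lower bound $T>0$ on the existence times. To this end I would differentiate the regularized equation $k$ times in $t$ and $m$ times in $x$, multiply by the matching weighted multiplier (morally $\omega^{2a}\partial_t^k\partial_x^m v$, with the weight $a$ read off from \eqref{E:2.9a}--\eqref{E:2.9b}), and integrate by parts over $I$. The weights are chosen precisely so that the boundary terms at $\Gamma$ vanish and the viscous term $\varepsilon(\omega^{2+2\mu}v'e^{S_0})'$ produces a nonnegative dissipation plus lower-order commutators. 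The non-isentropic structure forces two genuinely new families of terms: those arising from $p_SS_\eta$ (equivalently, from the factor $e^{S_0}$ and from $S_0'$, controlled by \eqref{E:1.3a}), which account for the extra pieces $\sum_{j}\|\omega^{2+\mu}\partial_t^{4-2j}\partial_x^{j+2}v\|_0^2$ and $\|\omega^{1/2+\mu}\partial_tv'\|_0^2$ in the energy, and those arising from differentiating the viscous coefficient, namely $\varepsilon(\omega^{2+2\mu}e^{S_0})'v'$. Each such term must be absorbed into $\frac{d}{dt}E^\varepsilon(t)$ or bounded by $P(E^\varepsilon(t))$; the higher spatial derivatives not directly controlled by the time-differentiated energy are recovered from the elliptic/parabolic structure of the equation, using Lemma \ref{L:3.2} to convert a relation of the form $f+\frac{\varepsilon}{\gamma}f_t=g$ into an $H^s$ bound independent of $\varepsilon$. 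Summing the weighted estimates and applying the weighted Hardy embedding \eqref{E:3.1a} together with the interpolation inequalities yields a differential inequality $\frac{d}{dt}E^\varepsilon(t)\le P(E^\varepsilon(t))$, which, integrated on a sufficiently short interval and combined with a continuity argument, gives $\sup_{[0,T]}E^\varepsilon(t)\le P(E^\varepsilon(0))$ uniformly in $\varepsilon$.

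With these uniform bounds in hand I would pass to the limit $\varepsilon\to0$: the energy controls, through \eqref{E:3.1a}, the full Sobolev norms of $v$ and its derivatives, so weak-$*$ compactness produces a subsequence $v^\varepsilon\rightharpoonup v$; the viscous term $\varepsilon(\omega^{2+2\mu}v'e^{S_0})'$ is $O(\varepsilon)$ in a negative-order space and drops out, so the limit $v$ solves \eqref{E:2.7}, and by weak lower semicontinuity $v$ satisfies the bound \eqref{E:2.11}. Uniqueness for the full range $1<\gamma<3$ then follows from an energy estimate on the difference of two solutions, controlled by Gronwall's inequality in a suitable weighted $L^2$-type norm.

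The main obstacle will be closing the uniform energy estimates, and it appears in a sharp form in the two boundary regimes of $\gamma$. For $2\le\gamma<3$ one is forced to bound quantities such as $\|\omega^{-(1/2-\mu)}\|_{L^\beta}$ (see \eqref{E:5.25dda}) rather than the cleaner $\|\rho_0^{-1/2}\|_{L^{\beta'}}$ of the isentropic case, and since $-\tfrac14<\mu\le0$ the admissible exponent $\beta$ is constrained and must be chosen carefully so that $1/\omega^{1/2-\mu}$ remains integrable near $\Gamma$. For $1<\gamma<2$ the coefficient $\omega^{1/(\gamma-1)}$ of $v_t$ degenerates arbitrarily fast as $\gamma\to1$, so by Remark \ref{R:1.1a} the order $l=3+2\lceil 1/2+\mu\rceil$ of time derivatives needed to recover the $H^2$ (hence $C^1$) norm of $v$ via \eqref{E:3.1a} tends to infinity; one must therefore run the weighted estimates at this $\gamma$-dependent order and handle the extra high-order spatial derivatives (the borderline case $\gamma=\tfrac32$ being representative) without losing uniformity in $\varepsilon$.
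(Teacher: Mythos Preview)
Your proposal is correct and follows essentially the same architecture as the paper: degenerate parabolic regularization with the weight $\omega^{2+2\mu}e^{S_0}$, fixed-point solvability of the approximate problem, $\varepsilon$-uniform weighted energy estimates on the $\partial_t^k$-problems together with elliptic recovery of spatial derivatives via Lemma~\ref{L:3.2}, passage to the limit, and uniqueness by estimating the difference. You have also correctly identified the two genuine technical obstructions (the choice of $\beta$ in \eqref{E:5.25dda} for $2\le\gamma<3$, and the unbounded order $l$ as $\gamma\to 1$).

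One small point of divergence worth flagging: the paper does not close the estimates through a differential inequality $\frac{d}{dt}E^\varepsilon\le P(E^\varepsilon)$ and Gronwall. Instead, the energy and elliptic propositions each produce bounds of the integrated form
\[
\text{(piece of }E)\ \le\ E^{\alpha}\Bigl(M_0+Ct\,P\bigl(\sup_{[0,t]}E\bigr)\Bigr)+Ct\,P\bigl(\sup_{[0,t]}E\bigr),\qquad 0<\alpha<1,
\]
and the sublinear factor $E^{\alpha}$ is absorbed by Young's inequality to yield $\sup_{[0,t]}E\le M_0+Ct\,P(\sup_{[0,t]}E)$, from which the $\varepsilon$-independent time $T_1$ and the bound \eqref{E:8.1} follow by a continuation argument. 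This is a slightly different (and in practice more robust) closing mechanism than the one you sketched, but it leads to the same conclusion.
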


We note that we shall use $P(\cdot)$ to denote a generic polynomial function of its argument, and $P$ will change from line to line with no  explicit expressions
necessarily given in the paper.

\bigskip


\section{The degenerate parabolic approximation of the system}


For   convenience, we write
$'=\frac{\partial }{\partial x}$.
Now for $\varepsilon>0,$ we consider the following nonlinear degenerate parabolic approximation of   \eqref{E:2.7}: 
\begin{equation}\label{E:3.10a}\begin{split}
\rho_0 v_t+\left(\frac{\rho^\gamma_0e^{S_0}}{(\eta')^\gamma}\right)'&=\varepsilon(\rho^{\gamma}_0v'e^{S_0})'
\quad \text{in}\ \ I\times[0,T].\\
\end{split}\end{equation}
With $\rho_0^{\gamma-1}=\omega\geq C\text{dist}(x,\Gamma)$ for $x\in I$ near $\partial I,$ $\eqref{E:3.10a}$ with the initial and boundary conditions becomes
\begin{equation}\label{E:3.10}
\begin{cases}\displaystyle
\omega^{1+2\mu} v_t+\left(\frac{\omega^{2+2\mu}e^{S_0}}{(\eta')^\gamma}\right)'=\varepsilon(\omega^{2+2\mu}v'e^{S_0})',
& \text{in}\ \ I\times[0,T],\\ \displaystyle
(v,\eta)=(u_0,x),  & \text{in}\ I\times\{t=0\},\\ \displaystyle
\rho_0=0, & \text{on}\ \ \Gamma,
\end{cases}\end{equation}
where $\mu=\frac{2-\gamma}{2(\gamma-1)}$. Compared with the isentropic case \cite{Coutand4, Lei},
this nonlinear equation has an additional term 
$\varepsilon\omega^{2+2\mu}v'e^{S_0}S_0'$,  which will cause more technical
difficulties in the following proofs and computations.

Given $u_0$ and $\rho_0$ and using the fact that $\eta(x,0)=x, \eta'(x,0)=1$ we can compute the quantity $v_t|_{t=0}$
for the degenerate parabolic $\varepsilon-$problem \eqref{E:3.10} by using $\eqref{E:3.10a}$:
\begin{equation}\label{E:4.1}\begin{split}
v_t|_{t=0}&=\left(-\frac{1}{\rho_0}\left(\left(\frac{\rho_0}{\eta'}\right)^{\gamma}e^{S_0} \right)'+{\varepsilon}\frac{1}{\rho_0}(\rho^{\gamma}_0v'e^{S_0})'
\right)\Bigg\vert_{t=0}\\
&=-\omega e^{S_0} S_0'+\frac{\gamma}{\gamma-1}\omega'(\varepsilon u_0'-1)e^{S_0}+{\varepsilon}\omega(u_0''+u_0'S_0')e^{S_0}.
\end{split}\end{equation}
Similarly, for all $k\in \mathbb{N},$
\begin{equation}\label{E:4.2}
u_k:=\partial_t^k v|_{t=0}=\partial_t^{k-1}\left(-\frac{1}{\rho_0}\left(\left(\frac{\rho_0}{\eta'}\right)^{\gamma}e^{S_0} \right)'+
\frac{\varepsilon}{\rho_0}(\rho^{\gamma}_0v'e^{S_0})'
\right)\Bigg\vert_{t=0}.
\end{equation}
These formulas show that each $\partial_t^k v|_{t=0}$ is a function of spatial derivatives of $u_0$ and $\rho_0$.

As in \cite{Coutand3, Coutand4, Lei}, the linearized problem of \eqref{E:3.10} is
\begin{equation*}\begin{cases}\displaystyle
\omega^{1+2\mu} v_t+\left(\frac{\omega^{2+2\mu}e^{S_0}}{(\bar{\eta}')^\gamma}\right)'=\varepsilon(\omega^{2+2\mu}v'e^{S_0})',
& \text{in}\ \ I\times[0,T],\\ \displaystyle
(v,\eta)=(u_0,x), & \text{in}\ I\times\{t=0\},\\ \displaystyle
\rho_0=0, & \text{on}\ \ \Gamma,
\end{cases}\end{equation*}
where $\bar{\eta}=x+\int_0^t \bar{v}(x,\tau)d\tau$ and $\bar{v}$ is given,
  the existence and the uniqueness of the solution $v^\varepsilon$ can be obtained by the standard arguments
for the above degenerate parabolic problem \eqref{E:3.10} in a time interval $[0,T^\varepsilon]$ with sufficiently smooth initial data,  using the fixed point argument. Henceforth, we assume that
$T^\varepsilon>0$ is sufficiently small such that,  independent of the choice of $v^\varepsilon$,
\begin{equation}\label{E:4.6}
\eta^\varepsilon(x,t)=x+\int_0^tv^\varepsilon(x,s)ds
\end{equation}
is injective for $t\in[0,T^\varepsilon]$, and   $\frac{1}{2}\leq \eta'(x,t)\leq \frac{3}{2}$ for $t\in[0,T^\varepsilon]$ and $x\in\bar{I}.$

{
We will establish the \textit{ a priori} estimates uniform in $\varepsilon$ to show that the time of existence does not depend
on $\varepsilon$, and then take the weak limit as $\varepsilon\rightarrow 0$ of the sequence of solutions
to \eqref{E:3.10} to obtain the existence of  solution to the Euler system \eqref{E:1.1}.
}





\section{Uniform estimates of $v^\varepsilon$  for $2\leq\gamma<3$}

Our objective in this section is to prove the uniform estimates of $v^\varepsilon$ for $2\leq\gamma<3$.
For the sake of notational convenience, we omit the superscript $\varepsilon$.
We first give some analysis
on $\hat{E}(t)$ in \eqref{E:2.9a} in order to establish the desired estimates.
From the weighted Sobolev embedding inequality \eqref{E:3.1a} and $-\frac{1}{4}\leq\mu\leq0$ as $2\leq\gamma<3$, we have,
for the first term in \eqref{E:2.9a}, 
\begin{equation}\label{E:5.1a}\begin{split}
\big|\big|\omega^{1+\mu}\partial_t^{4}
v'\big|\big|^2_0\geq ||\partial_t^{4}
v||^2_{ 1-1- \mu}\geq ||\partial_t^{4}
v||^2_0,
\end{split}\end{equation}
for the first term of the second line in \eqref{E:2.9a}, 
\begin{equation}\label{E:5.1b}\begin{split}
 \sum_{j=1}^2\big|\big\vert\omega^{3/2+\mu}
\partial_t^{5-2j}\partial_x^{j+2}v(\cdot,t)\big|\big\vert^2_0
\geq\big|\big\vert|
\partial_t^{5-2j}v(\cdot,t)\big|\big\vert^2_{j+2-3/2-\mu}\geq\sum_{j=1}^2 ||\partial_t^{5-2j}v||^2_{j-1/2},
\end{split}\end{equation}
and  for the first term of the third line in \eqref{E:2.9a}, 
\begin{equation}\label{E:5.1c}\begin{split}
\sum_{j=1}^2
\big|\big\vert\omega^{2+\mu}
\partial_t^{4-2j}\partial_x^{i+2}v(\cdot,t)\big|\big\vert^2_{0}\geq\sum_{j=1}^2 ||\partial_t^{4-2j}v||^2_{j+2-2-\mu}
\geq\sum_{j=1}^2 ||\partial_t^{4-2j}v||^2_{j}.
\end{split}\end{equation}
Thus, \eqref{E:5.1a}-\eqref{E:5.1c} lead to
\begin{equation}\label{E:5.1d}\begin{split}
\hat{E}(t)\geq \sum_{s=0}^4
||\partial_t^{s}v||^2_{2-s/2}.
\end{split}\end{equation}

\subsection{Some $\varepsilon-$independent energy estimates on the $\partial_t^k-$problem}

\begin{proposition}\label{L:5.1}
For $2\leq\gamma<3$, there exists a constant $\alpha\in(0,1)$, such that  one has the following $\varepsilon-$independent energy estimate for
$\partial_t^5-$problem of $\eqref{E:3.10}_1$:
 \begin{equation}\label{E:5.2a}\begin{split}
\big|\big|\omega^{1/2+\mu}\partial_t^5 v\big|\big|_0^2+&\big|\big|\omega^{1+\mu}\partial_t^4v'\big|\big|_0^2+\big|\big|\omega^{1+\mu}
\partial_t^4v\big|\big|_0^2
+\varepsilon\int_0^t\big|\big|\omega^{1+\mu}\partial^5_t v'\big|\big|_0^2\\
&\leq \hat{E}^{\alpha}\left(\hat{M}_0+CtP\Big(\sup\limits_{[0,t]}\hat{E}\Big)\right),
\end{split}
\end{equation}
{with  $\hat{M}_0=P(\hat{E}(0))$ and $P(\cdot)$ some polynomial function.}
\end{proposition}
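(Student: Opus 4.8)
The plan is to run the standard Coutand--Shkoller time-differentiated energy scheme on the regularized equation $\eqref{E:3.10}_1$, while carefully carrying along the extra factors $e^{S_0}$ and $S_0'$ produced by the non-isentropic pressure and artificial viscosity. Since $\omega=\rho_0^{\gamma-1}$, $\rho_0$ and $S_0$ are independent of $t$, and $\partial_t\eta'=v'$ so that $\partial_t^5\eta'=\partial_t^4v'$, applying $\partial_t^5$ to $\eqref{E:3.10}_1$ yields, schematically,
\begin{equation*}
\omega^{1+2\mu}\partial_t^6 v-\gamma\big[\omega^{2+2\mu}e^{S_0}(\eta')^{-\gamma-1}\partial_t^4 v'\big]'=\varepsilon\big[\omega^{2+2\mu}e^{S_0}\partial_t^5 v'\big]'+\mathcal{R},
\end{equation*}
where $\mathcal{R}$ collects all lower-order commutators: products of $\partial_t^j v'$ with $j\le 3$ against bounded powers of $(\eta')^{-1}$, together with the $\varepsilon$-terms in which a spatial derivative lands on $\omega^{2+2\mu}$ or on $e^{S_0}$ (the latter producing the genuinely non-isentropic factor $S_0'$). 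Here I use the identities $\omega^{1+2\mu}=\rho_0$, $\omega^{2+2\mu}=\rho_0^\gamma=(\omega^{1+\mu})^2$, which make all the weights in $\hat E$ transparent.

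Next I multiply by $\partial_t^5 v$ and integrate over $I$, integrating the second and third terms by parts in $x$; there is no boundary contribution because $\omega^{2+2\mu}=\rho_0^\gamma$ vanishes on $\Gamma$. The first term gives $\tfrac12\frac{d}{dt}\|\omega^{1/2+\mu}\partial_t^5 v\|_0^2$; the top-order pressure term, after the integration by parts and using $\partial_t^5 v'=\partial_t(\partial_t^4 v')$, gives $\tfrac{\gamma}{2}\frac{d}{dt}\int_I\omega^{2+2\mu}e^{S_0}(\eta')^{-\gamma-1}|\partial_t^4 v'|^2\,dx$ plus a commutator $\mathcal{C}$ in which $\partial_t$ hits $(\eta')^{-\gamma-1}$; the viscous term gives the nonnegative dissipation $\varepsilon\|\omega^{1+\mu}\partial_t^5 v'\|_0^2$, which I move to the left. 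Integrating over $[0,t]$ and using $\tfrac12\le\eta'\le\tfrac32$ together with $\underline S\le S_0'\le\bar S$ from $\eqref{E:1.3a}$, the weighted pressure integral is comparable to $\|\omega^{1+\mu}\partial_t^4 v'\|_0^2$, so the first two good terms of $\eqref{E:5.2a}$ and the $\varepsilon$-dissipation appear on the left with the initial values absorbed into $\hat M_0=P(\hat E(0))$. The remaining term $\|\omega^{1+\mu}\partial_t^4 v\|_0^2$ is then recovered almost for free from $\partial_t^4 v(t)=\partial_t^4 v(0)+\int_0^t\partial_t^5 v\,ds$ and the pointwise bound $\omega^{1+\mu}\le C\omega^{1/2+\mu}$, which accounts for the factor $Ct$.

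It then remains to estimate $\mathcal{R}$, $\mathcal{C}$ and the $\varepsilon$-commutators. For each such term I would control the lower-order factors $\partial_t^j v'$ ($j\le3$) in $L^\infty$ or in intermediate Sobolev norms via $\eqref{E:3.3a}$, $\eqref{E:3.3b}$ and the interpolation inequality $\eqref{E:3.3}$, and pair them against one top-order factor measured in the weighted norms that constitute $\hat E$, using the Hardy-type embedding $\eqref{E:3.1a}$. The $\varepsilon$-error terms carrying $(\omega^{2+2\mu})'=(2+2\mu)\omega^{1+2\mu}\omega'$ or the new factor $S_0'$ are absorbed into the dissipation $\varepsilon\|\omega^{1+\mu}\partial_t^5 v'\|_0^2$ by Young's inequality, the bound $S_0'\le\bar S$ being exactly what keeps the entropy contribution controlled. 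After absorbing the genuine top-order pieces into the left-hand side, the leftover bulk is $\le Ct\,P(\sup_{[0,t]}\hat E)$, and interpolating the lower-order factors against the top-order energy produces the sublinear prefactor $\hat E^\alpha$, $\alpha\in(0,1)$, which yields $\eqref{E:5.2a}$.

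The main obstacle will be precisely the error terms in which a spatial derivative falls on the degenerate weight $\omega^{2+2\mu}=\rho_0^\gamma$, lowering its power to $\omega^{1+2\mu}=\rho_0$ and eroding the weight the vacuum degeneracy needs. Controlling these forces the weighted embedding $\eqref{E:3.1a}$ to be applied at the sharp exponents, and it is exactly the range $-\tfrac14<\mu\le0$ (equivalently $2\le\gamma<3$) that makes the resulting $H^{b-a/2}$ indices admissible. Tracking all these weights simultaneously while ensuring the aggregate error retains the sublinear power $\hat E^\alpha$, so that the subsequent continuation argument closes, is the delicate part, here further complicated by the additional $S_0$- and $S_0'$-dependent terms that are absent from the isentropic problem.
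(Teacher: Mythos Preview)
Your high-level scheme matches the paper's, but there is a genuine gap in how you propose to handle the pressure commutators~$\mathcal R$. After testing with $\partial_t^5 v$ and integrating by parts in $x$, each such term becomes
\[
\int_0^t\!\int_I \omega^{2+2\mu}e^{S_0}\,\partial_t^i\!\big((\eta')^{-\gamma-1}\big)\,\partial_t^{4-i}v'\,\partial_t^5 v'\,dx\,ds,\qquad i=1,\dots,4,
\]
which carries the top-order factor $\partial_t^5 v'$. This quantity is \emph{not} controlled by $\hat E$: the only place it appears is in the dissipation $\varepsilon\int_0^t\|\omega^{1+\mu}\partial_t^5 v'\|_0^2$, and absorbing into that via Young produces a $1/\varepsilon$ on the remaining factors, destroying $\varepsilon$-independence. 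Your sentence ``pair them against one top-order factor measured in the weighted norms that constitute $\hat E$'' therefore cannot be carried out as written. The paper resolves this by an additional \emph{integration by parts in time} (its $I_4$ step, equation~\eqref{E:5.9}), trading $\partial_t^5 v'$ for $\partial_t^4 v'$ (which \emph{is} in $\hat E$) at the cost of a boundary-in-time term evaluated at $t$. That boundary term is precisely what generates the sublinear prefactor $\hat E^{\alpha}$ in~\eqref{E:5.2a}: one of its three factors is rewritten via $\partial_t^3 v'=u_3'+\int_0^t\partial_t^4 v'$ to extract $\hat M_0+CtP(\sup\hat E)$, while the remaining factors contribute strictly less than a full power of $\hat E$. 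Your explanation that ``interpolating the lower-order factors against the top-order energy produces the sublinear prefactor $\hat E^\alpha$'' does not capture this mechanism.

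Two minor remarks. First, since $\omega$ and $S_0$ are time-independent and the artificial viscosity is linear in $v'$, applying $\partial_t^5$ to $\varepsilon(\omega^{2+2\mu}e^{S_0}v')'$ produces exactly $\varepsilon(\omega^{2+2\mu}e^{S_0}\partial_t^5 v')'$; there are no $\varepsilon$-commutators of the type you describe, so that part of your discussion is vacuous. Second, the hypothesis~\eqref{E:1.3a} bounds $S_0'$, not $S_0$ itself; you use it correctly when invoking $S_0'\le\bar S$, but the comparability of the pressure energy $\int\omega^{2+2\mu}e^{S_0}(\eta')^{-\gamma-1}|\partial_t^4 v'|^2$ with $\|\omega^{1+\mu}\partial_t^4 v'\|_0^2$ requires a two-sided bound on $e^{S_0}$, which follows from~\eqref{E:1.3a} on the bounded interval $I$ but should be stated.
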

\begin{proof}
We take the fifth partial derivative $\partial_t^5$ in \eqref{E:3.10}    and multiply it by $\partial_t^5 v$, after integrating by parts, we have the following identity:
\begin{equation}\label{E:5.2b}\begin{split}
 \frac{1}{2}\frac{d}{dt}\int_{I} \omega^{1+2\mu}|\partial_t^5 v|^2-\int_I\partial_t^5\left(\frac{\omega^{2+2\mu}}{(\eta')^\gamma}\right)
\partial_t^5 v'e^{S_0}+\varepsilon\int_I\omega^{2+2\mu}|\partial^5_tv'|^2e^{S_0}=0,\\
\end{split}\end{equation}
where $\omega=\rho_0^{\gamma-1}, -\frac{1}{4}<\mu=\frac{2-\gamma}{2(\gamma-1)}\leq 0.$
We see that the second term on the { left}-hand side of \eqref{E:5.2b} can be written as
\begin{equation}\label{E:5.3}\begin{split}
-\int_I\partial_t^5\left(\frac{\omega^{2+2\mu}}{(\eta')^\gamma}\right)\partial^5_t v'e^{S_0}=&\gamma\int_I\frac{\omega^{2+2\mu}}{(\eta')^{\gamma+1}}
\partial^4_t v'\partial^5_t v'e^{S_0}\\
&+\sum_{ i=1}^4c_{ i}\int_I\omega^{2+2\mu}\partial_t^{ i}\frac{1}{(\eta')^{\gamma+1}}\partial_t^{4-{ i}}v'\partial^5_t v'e^{S_0}\\
=&\frac{\gamma}{2}\frac{d}{dt}\int_I\frac{\omega^{2+2\mu}}{(\eta')^{\gamma+1}}\big|\partial_t^4v'\big|^2e^{S_0}\\
&+\frac{(\gamma+1)\gamma}{2}\int_I
\frac{\omega^{2+2\mu}}{(\eta')^{\gamma+2}}v'\big|\partial_t^4v'\big|^2e^{S_0}\\
&+\sum_{ i=1}^4 c_{ i}\int_I\partial_t^{ i}\frac{1}{(\eta')^{\gamma+1}}\partial_t^{4-{ i}}v'\omega^{2+2\mu}\partial^5_t v'
e^{S_0}.\\
\end{split}\end{equation}
Hence, substituting \eqref{E:5.3} into \eqref{E:5.2b}, integrating it from $0$ to $t$, we find that
\begin{equation}\label{E:5.5a}\begin{split}
\frac{1}{2}\int_{I} &\omega^{1+2\mu}\big|\partial_t^5 v\big|^2+\frac{\gamma}{2}\int_I\big|\partial_t^4v'\big|^2\frac{\omega^{2+2\mu}}
{(\eta')^{\gamma+1}}e^{S_0}
+\varepsilon\int_0^t\int_I\omega^{2+2\mu}\big|\partial^5_t v'\big|^2e^{S_0}\\
=&\frac{1}{2}\int_I\omega^{1+2\mu}\big|\partial_t^5 v_0\big|^2+\frac{\gamma}{2}\int_I\frac{\omega^{2+2\mu}}{(\eta')^{\gamma+1}}\Big|_{t=0}
|\partial_t^4v_0'|^2e^{S_0} \\
&-\frac{(\gamma+1)\gamma}{2}\int_0^t\int_I\frac{\omega^{2+2\mu}}{(\eta')^{\gamma+2}}v'\big|\partial_t^4v'\big|^2e^{S_0} \\
&-\sum_{{i}=1}^4 c_{ i}\int_0^t\int_I\partial_t^{ i}\frac{1}{(\eta')^{\gamma+1}}\partial_t^{4- i}v'\omega^{2+2\mu}
\partial^5_t v'e^{S_0} \\
=&\sum_{j=1}^4I_j.
\end{split}\end{equation}
It is easy to verify that $I_1, I_2$ can be controlled by $\hat{M}_0.$
Now we estimate $I_3, I_4$ on the right hand-side of {\eqref{E:5.5a}}.
\begin{equation}\label{E:5.6}\begin{split}
I_3&=-\frac{(\gamma+1)\gamma}{2}\int_0^t\int_I\frac{\omega^{2+2\mu}}{(\eta')^{\gamma+2}}v'|\partial_t^4v'|^2e^{S_0}
\leq C\int_0^t||v'||_{\infty}
\big\vert\big\vert\omega^{1+\mu}\partial_t^4 v'\big\vert\big\vert^2_{0}\\
&\leq C\int_0^t ||v||_2\big\vert\big\vert\omega^{1+\mu}\partial_t^4 v'\big\vert\big\vert^2_{0}
\leq Ct P\Big(\sup\limits_{[0,t]} \hat{E}\Big),\\
\end{split}\end{equation}
where we have used \eqref{E:2.11}, \eqref{E:5.1d} and $1/2\leq\eta'\leq 3/2$ .
Then,  using integration by part in time, we have
\begin{equation}\label{E:5.9}\begin{split}
I_{4}=&-\sum_{i=1}^4 c_i\int_0^t\int_I\partial_t^i\frac{1}{(\eta')^{\gamma+1}}\partial_t^{4-i}v'
\omega^{2+2\mu}\partial^5_t v'
e^{S_0}\\
=&\underbrace{ \int_0^t\int_I\left(\sum_{i=1}^4c_i\partial_t^i\frac{1}{(\eta')^{\gamma+1}}\partial_t^{4-i}v'\right)_t
\omega^{2+2\mu}
\partial^4_t v'e^{S_0}}_{J}\\
&+\int_I\sum_{i=1}^4c_i\partial_t^i\frac{1}{(\eta')^{\gamma+1}}\partial_t^{4-{i}}v'\omega^{2+2\mu}
\partial^4_t v'\Big\vert_0^te^{S_0}.\\
\end{split}\end{equation}
where
$$J:=\int_0^t \sum_{i=1}^{10}J_idt,\quad \text{and}\ J_i=\int_I R(\eta')j_i\omega^{2+2\mu}\partial_t^4 v' dx,$$
the terms $j_i, i=1,2\cdots ,10 $ are the functions of $v', \partial_t^k v', k=1,2,3,4$,  as in the following:
\begin{equation*}\begin{split}
j_1&=\partial_t^4 v'v', \quad j_2=\partial_t^3v' (v')^2,\quad j_3=\partial_t^3 v'\partial_t v',
\quad j_4=\partial_t^2 v' \partial_t v' v',\quad j_5=(\partial_t^2 v')^2,\\
j_6&=\partial^2_t v'(v')^3,\quad j_7=(\partial_t v')^3,\quad j_8=(\partial_t v')^2(v')^2,
\quad j_9=\partial_t v' (v')^4,\quad j_{10}=(v')^6,\\
\end{split}\end{equation*}
where $R(\eta')$ denotes some power functions of $\eta'.$
We first note that
\begin{equation}\label{E:5.10}\begin{split}
|J_1|\leq C ||v'||_{\infty}\big|\big|R(\eta')\big|\big|_{\infty}
\big|\big|\omega^{1+\mu}\partial_t^4 v'\big|\big|_{0}^2
\leq C P\Big(\sup\limits_{[0,t]}\hat{E}\Big),
\end{split}\end{equation}
where we have used \eqref{E:3.3b} and \eqref{E:5.1d}, which means that
\begin{equation*}\label{E:5.10aa}
\begin{split}
||v'||_\infty\leq C ||v'||^{1/4}_{0}||v'||^{3/4}_1
\leq C\hat{E}.
\end{split}\end{equation*}
For $J_2$, we have
\begin{equation*}\label{E:5.11a}\begin{split}
|J_2|\leq  ||R(\eta')||_\infty\big\vert\big\vert\omega^{1/2+\mu}\partial_t^3 v'\big\vert\big\vert_0||
\omega||^{1/2}_\infty||v'||_\infty^2\big\vert\big\vert
\omega^{1+\mu}\partial_t^4 v'\big\vert\big\vert_0
\leq C P\Big(\sup\limits_{\in[0,t]}\hat{E}\Big).
\end{split}\end{equation*}
Similarly, we have
\begin{equation*}\label{E:5.11}\begin{split}
|J_3|&\leq  ||R(\eta')||_\infty\big\vert\big\vert\omega^{1+\mu}\partial_t^3 v'\big\vert\big\vert_{L^4}||\partial_t v'||_{L^4}\big\vert\big\vert
\omega^{1+\mu}\partial_t^4 v'\big\vert\big\vert_0\\
&\leq C
\big|\big|\omega^{1+\mu}\partial_t^3 v'\big|\big|_{1/2}\big|\big|\partial_t v'\big|\big|_{\frac{1}{2}}
\big\vert\big\vert
\omega^{1+\mu}\partial_t^4 v'\big\vert\big\vert^2_0
\leq C P\Big(\sup\limits_{[0,t]}\hat{E}\Big),
\end{split}\end{equation*}
where we have used the physical vacuum condition \eqref{E:1.8} and
$$
\big\vert\big\vert
\omega^{1/2}\partial_x(\omega^{1+\mu}\partial_t^3 v')\big\vert\big\vert^2_0\leq \big\vert\big\vert
\omega^{3/2+\mu}\partial_t^3 v''\big\vert\big\vert^2_0+\big\vert\big\vert
\omega^{1/2+\mu}\omega'\partial_t^3 v'\big\vert\big\vert^2_0\leq \hat{E}.
$$
Using \eqref{E:3.1a}, we have
\begin{equation}\label{E:5.11bc}\begin{split}
\big|\big|\omega^{1+\mu}\partial_t^3 v'\big|\big|_{1/2}=\big|\big|\omega^{1+\mu}\partial_t^3 v'\big|\big|_{1-\frac{1}{2}}\leq \big\vert\big\vert
\omega^{1/2}\partial_x(\omega^{1+\mu}\partial_t^3 v')\big\vert\big\vert_0\leq \hat{E},
\end{split}\end{equation}
and similarly,
\begin{equation*}\label{E:5.11a2}\begin{split}
|J_4|\leq C\big|\big|R(\eta')\big|\big|_{\infty}\big\vert\big\vert
\omega^{1+\mu}\partial_t^2 v'\big\vert\big\vert_{L^4}||\partial_t v'||_{L^4}||v'||_\infty \big\vert\big\vert
\omega^{1+\mu}\partial_t^4 v'\big\vert\big\vert_0
\leq\hat{M}_0+ Ct P\Big(\sup\limits_{[0,t]}\hat{E}\Big),
\end{split}\end{equation*}
where we have used the fact
$$
\big|\big|\omega^{1+\mu}\partial_t^2 v'\big\vert\big\vert_{1/2}=\Big|\Big|\omega^{1+\mu}u_2'+\int_0^t
\omega^{1+\mu}\partial_t^3 v'\Big|\Big|_{1/2}.
$$
Similarly, $J_5$ and $J_6$ can be estimated as
\begin{equation*}\label{E:5.11b}\begin{split}
|J_5|\leq C\big|\big|R(\eta')\big|\big|_{\infty}\big\vert\big\vert
\omega^{1+\mu}\partial_t^2 v'\big\vert\big\vert^2_{L^4}\big\vert\big\vert
\omega^{1+\mu}\partial_t^4 v'\big\vert\big\vert_0
\leq \hat{M}_0+Ct P\Big(\sup\limits_{[0,t]}\hat{E}\Big),
\end{split}\end{equation*}
\begin{equation*}\label{E:5.11ba}\begin{split}
|J_6|\leq C\big|\big|R(\eta')\big|\big|_{\infty}\big\vert\big\vert
\omega^{1+\mu}\partial^2_t v'\big\vert\big\vert_{0}||v'||^3_\infty\big\vert\big\vert
\omega^{1+\mu}\partial_t^4 v'\big\vert\big\vert_0
\leq Ct P\Big(\sup\limits_{[0,t]}\hat{E}\Big).
\end{split}\end{equation*}
For ${J_7-J_{10}}$, using \eqref{E:3.2a} and \eqref{E:5.1d},  we have
\begin{equation*}\label{E:5.11bab}\begin{split}
|J_7|&\leq C\big|\big|R(\eta')\big|\big|_{\infty}\big\vert\big\vert
\partial_t v'\big\vert\big\vert^3_{L^6}||
\omega||^{1+\mu}_\infty\big\vert\big\vert
\omega^{1+\mu}\partial_t^4 v'\big\vert\big\vert_0\\
&\leq C \big\vert\big\vert
\partial_t v'\big\vert\big\vert^3_{1/2}\big\vert\big\vert
\omega^{1+\mu}\partial_t^4 v'\big\vert\big\vert_0
\leq C P\Big(\sup\limits_{[0,t]}\hat{E}\Big),
\end{split}\end{equation*}
and
\begin{equation*}\label{E:5.11bac}\begin{split}
|J_8|\leq& C\big|\big|R(\eta')\big|\big|_{\infty}\big\vert\big\vert
\partial_t v'\big\vert\big\vert^2_{L^4}||v'||^2_\infty\vert\vert
\omega\vert\vert^{1+\mu}_\infty\big\vert\big\vert
\omega^{1+\mu}\partial_t^4 v'\big\vert\big\vert_0
\leq C P\Big(\sup\limits_{[0,t]}\hat{E}\Big),\\
|J_9|\leq& C\big|\big|R(\eta')\big|\big|_{\infty}||\omega||^{1/2}_\infty||v'||_\infty^4\big\vert\big\vert
\omega^{1/2+\mu}\partial_t v'\big\vert\big\vert_{0}\vert\vert
\omega\vert\vert^{1+\mu}_\infty\big\vert\big\vert
\omega^{1+\mu}\partial_t^4 v'\big\vert\big\vert_0
\leq C P\Big(\sup\limits_{[0,t]}\hat{E}\Big),\\
\end{split}\end{equation*}
and
\begin{equation*}\label{E:5.11bae}\begin{split}
|J_{10}|&\leq C\big|\big|R(\eta')\big|\big|_{\infty}\vert\vert
 v'\vert\vert^6_{L^{12}}\vert\vert
\omega\vert\vert^{1+\mu}_\infty\big\vert\big\vert
\omega^{1+\mu}\partial_t^4 v'\big\vert\big\vert_0
\leq C \big\vert\big\vert
\omega^{1+\mu}\partial_t^4 v'\big\vert\big\vert_0\vert\vert
 v'\vert\vert^6_{1/2}\leq C P\Big(\sup\limits_{[0,t]}\hat{E}\Big).\\
\end{split}\end{equation*}
Next, we treat the second term on the right-hand side of  \eqref{E:5.9}, beginning with the case
of $i=1.$ We see that for $\delta>0$,
\begin{equation*}
\begin{split}
\int_I  \partial_t\frac{1}{\eta'^{(\gamma+1)}}\partial_t^3 v'\omega^{2+2\mu}\partial_t^4 v'\big\vert_0^t
=&\int_I \Big(R(\eta')\partial_t^3 v' v'\omega^{2+2\mu}\partial_t^4 v'\Big)(t)\\
&-\int_I \Big(R(\eta')\partial_t^3 v' v'\omega^{2+2\mu}\partial_t^4 v'\Big)(0),
\end{split}\end{equation*}
and
 \begin{equation}\label{E:5.12a}\begin{split}\int_I \Big(R(\eta')\partial_t^3 v' v'\omega^{2+2\mu}\partial_t^4 v'\Big)(t)\leq& C||v'||_{\infty}\big|\big|\omega^{1+\mu}\partial_t^3 v'\big|\big|_{0}
\big|\big|\omega^{1+\mu}\partial_t^4 v'\big|\big|_0\\
\leq & C\hat{E}(t)^{3/4}\left(M_0+CtP\Big(\sup\limits_{[0,t]}\hat{E}\right).\end{split}\end{equation}
The case when $i=2, 3, 4$ can be estimated in the same fashion.
Using \eqref{E:1.3a} again, there exists a constant $\alpha$, such that all the estimates \eqref{E:5.5a}-\eqref{E:5.12a} together  yield
\begin{equation}\label{E:5.14c}\begin{split}
\big|\big|\omega^{1/2+\mu}\partial_t^5 v\big|\big|^2+&\big|\big|\omega^{1+\mu}\partial_t^4v'\big|\big|^2
+\varepsilon\int_0^t\big|\big|\omega^{1+\mu}\partial^5_t v'\big|\big|^2\\
\leq& \hat{E}^\alpha\left(\hat{M}_0+CtP\Big(\sup\limits_{[0,t]}\hat{E}\Big)\right),\quad
0<\alpha<1.
\end{split}\end{equation}
Since $\partial_t^l v=u_l+\int_0^t \partial_t^{l+1} v,$   we have
$$
\big|\big|\omega^{1+\mu}\partial^l_t v\big|\big|_0^2\leq \hat{M}_0+Ct||\omega||_\infty\big|\big|\omega^{1/2+\mu}\partial_t^{l+1} v\big|\big|_0^2.
$$
Letting $l=4$ and using  first term in the energy estimate \eqref{E:5.14c}, we have
\begin{equation}\label{E:5.36}
\big|\big|\omega^{1+\mu}\partial_t^{4} v\big|\big|_0^2\leq \hat{E}^\alpha\left(\hat{M}_0+CtP\Big(\sup\limits_{[0,t]}\hat{E}\Big)\right).
\end{equation}
Thus, the proof of Proposition \ref{L:5.1} is complete.
\end{proof}

Using the same argument as proving Proposition \ref{L:5.1}, we can consider the $\varepsilon-$independent energy estimates for the $\partial_t^3-$problem and $\partial_t-$problem of $\eqref{E:3.10}$ and  obtain the following  estimates:
\begin{proposition}\label{L:5.3} 
For $2\leq\gamma<3$, there exists a constant $\alpha\in(0,1)$,  such that one has the  following energy estimates uniform in $\varepsilon$:

 \begin{equation*}\label{E:5.2ab}
 \begin{split}
\big|\big|\omega^{1/2+\mu}\partial_t^3 v\big|\big|_0^2+&\big|\big|\omega^{1+\mu}\partial_t^2v'\big|\big|_0^2+\big|\big|\omega^{1+\mu}\partial_t^2v\big|
\big|_0^2
+\varepsilon\int_0^t\big|\big|\omega^{1+\mu}\partial^3_t v'\big|\big|_0^2\leq \hat{E}^\alpha\left(\hat{M}_0+CtP\Big(\sup\limits_{[0,t]}\hat{E}\Big)\right)
\end{split}\end{equation*}
and 
 \begin{equation*}\label{E:5.2ac}\begin{split}
\big|\big|\omega^{1/2+\mu}\partial_t v\big|\big|_0^2+&\big|\big|\omega^{1+\mu}v'\big|\big|_0^2+\big|\big|\omega^{1+\mu}v\big|\big|_0^2
+\varepsilon\int_0^t\big|\big|\omega^{1+\mu}\partial_t v'\big|\big|_0^2
\leq \hat{E}^\alpha\left(\hat{M}_0+CtP\Big(\sup\limits_{[0,t]}\hat{E}\Big)\right).\\
\end{split}\end{equation*}
\end{proposition}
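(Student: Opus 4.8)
The plan is to mimic the energy argument of Proposition~\ref{L:5.1} verbatim, lowering the order of time differentiation from five to three and then to one. Because the lower-order $\partial_t^k$-problems carry more favorable weights and fewer commutator terms than the $\partial_t^5$-problem, no genuinely new estimate is required; the difficulty is purely \emph{bookkeeping}.

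For the $\partial_t^3$-problem I would apply $\partial_t^3$ to $\eqref{E:3.10}_1$, multiply by $\partial_t^3 v$, and integrate by parts over $I$ to obtain the analogue of \eqref{E:5.2b}:
$$\frac{1}{2}\frac{d}{dt}\int_I \omega^{1+2\mu}|\partial_t^3 v|^2 - \int_I \partial_t^3\!\left(\frac{\omega^{2+2\mu}}{(\eta')^\gamma}\right)\partial_t^3 v'\,e^{S_0} + \varepsilon\int_I \omega^{2+2\mu}|\partial_t^3 v'|^2 e^{S_0}=0.$$
The first term produces $\|\omega^{1/2+\mu}\partial_t^3 v\|_0^2$ and the viscous term produces the desired $\varepsilon$-integral. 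Expanding the pressure term by the Leibniz rule exactly as in \eqref{E:5.3}, the leading contribution is the perfect time derivative $\frac{\gamma}{2}\frac{d}{dt}\int_I \frac{\omega^{2+2\mu}}{(\eta')^{\gamma+1}}|\partial_t^2 v'|^2 e^{S_0}$, which after integration in time furnishes the positive definite quantity $\|\omega^{1+\mu}\partial_t^2 v'\|_0^2$ (using $\tfrac12\le\eta'\le\tfrac32$). The remaining contributions split into a cubic term of the type $I_3$ in \eqref{E:5.6} and a commutator sum of the type $I_4$ in \eqref{E:5.9}; since we now differentiate only three times, the commutator index runs over $i=1,2$ only, so the list of nonlinear factors $j_i$ is a strict sub-list of the one appearing in Proposition~\ref{L:5.1}.

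Integrating from $0$ to $t$, the data terms are controlled by $\hat{M}_0$, the cubic term is bounded by $CtP(\sup_{[0,t]}\hat{E})$ through $\|v'\|_\infty\lesssim\|v\|_2$ and \eqref{E:5.1d} exactly as in \eqref{E:5.6}, and the commutator terms are handled by integration by parts in time as in \eqref{E:5.9}. Each resulting $J$-type and boundary-in-time term is closed by the same toolkit already used: the embeddings \eqref{E:3.2a} and \eqref{E:3.3b}, the weighted embedding \eqref{E:3.1a}, and the physical vacuum bound \eqref{E:1.8}; the boundary-in-time terms, estimated as in \eqref{E:5.12a}, are precisely what produces the prefactor $\hat{E}^\alpha$ with some $\alpha\in(0,1)$ after invoking \eqref{E:1.3a}. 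The $\partial_t$-problem is even simpler: applying $\partial_t$ and multiplying by $\partial_t v$, the Leibniz expansion of the pressure term contains only the perfect derivative $\frac{\gamma}{2}\frac{d}{dt}\int_I \frac{\omega^{2+2\mu}}{(\eta')^{\gamma+1}}|v'|^2 e^{S_0}$ (yielding $\|\omega^{1+\mu}v'\|_0^2$) together with a single cubic remainder, with no $I_4$-commutator present at all.

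Finally, the two remaining terms $\|\omega^{1+\mu}\partial_t^2 v\|_0^2$ and $\|\omega^{1+\mu}v\|_0^2$ are recovered from the controlled top-order terms by writing $\partial_t^l v = u_l + \int_0^t \partial_t^{l+1}v$ with $l=2$ and $l=0$ and arguing exactly as in the derivation of \eqref{E:5.36}. I expect no essential obstacle: the entire content of this proposition is keeping track of the weight exponents so that every nonlinear factor lands in a norm controlled by $\hat{E}$, and verifying that the reduced commutator sum for $\partial_t^3$ (and its complete absence for $\partial_t$) introduces nothing beyond what was already estimated in Proposition~\ref{L:5.1}.
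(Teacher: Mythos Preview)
Your proposal is correct and follows exactly the approach the paper takes: the paper itself provides no detailed proof of this proposition, stating only that ``using the same argument as proving Proposition~\ref{L:5.1}'' one obtains the $\partial_t^3$- and $\partial_t$-estimates. Your outline of lowering the order of time differentiation, noting that the commutator sum shrinks to $i=1,2$ (and vanishes entirely for $\partial_t$), and recovering the zeroth-order weighted norms via the $\partial_t^l v = u_l + \int_0^t\partial_t^{l+1}v$ argument of \eqref{E:5.36} is precisely the intended reduction.
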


\subsection{Estimates of higher-order spatial derivatives}

Having obtained the uniform energy estimates in Propositions \ref{L:5.1}-\ref{L:5.3}, we can begin our estimates of higher-order spatial derivatives.
 We consider $\partial^{k+1}_t-$problem of $\eqref{E:3.10}$:
\begin{equation}\label{E:5.15a}\begin{split}
\big(\omega^{2+2\mu}\partial^k_t v'e^{S_0}\big)'&-\frac{\varepsilon}{\gamma}\partial_t\big(\omega^{2+2\mu}\partial_t^k v'
e^{S_0}\big)'=g,\\
\end{split}\end{equation}
where
\begin{equation*}\begin{split}
g=&-\frac{1}{\gamma}\omega^{1+2\mu}\partial_t^{k+2}v+\sum_{i=1}^k\left(c_{i}\omega^{2+2\mu}\partial_t^{{i}}
\frac{1}{(\eta')^{\gamma+1}}\partial^{k-{i}}_tv'e^{S_0}\right)'\\
&+\left[\left(1-\frac{1}{(\eta')^{\gamma+1}}\right)\big(\omega^{2+2\mu}
\partial_t^{k}v'e^{S_0}\big)'\right]
-(\gamma+1)\frac{\omega^{2+2\mu}\partial_t^k v'e^{S_0}\eta''}{(\eta')^{\gamma+2}}.\\
\end{split}\end{equation*}
Applying Lemma \ref{L:3.2} directly, we have
$$
\big|\big|\omega^{2+2\mu}\partial_t^k v'\big|\big|_0\leq C\max\{||g||_0,||f(0)||_0\}.
$$
However, this estimate is not good enough to obtain the corresponding estimates
with weights of $\omega^{1/2+\mu},
\omega^{3/2+\mu}, \omega^{1+\mu}, \omega^{2+\mu}$ in the energy function $\hat{E}(t)$. To obtain the desired estimates, we shall reduce  \eqref{E:5.15a}
to some new equations of the form in Lemma \ref{L:3.2} by multiplying \eqref{E:5.15a} by some suitable multipliers.

\begin{proposition}\label{L:5.4} For $2\leq\gamma<3,$ there exists a constant $\alpha\in(0,1)$, such that  one has the
following estimates:
\begin{equation}
\label{E:5.32a}\begin{split}
\sup\limits_{[0,t]}\Big(\big|\big|\omega^{1/2+\mu}\partial_t^3 v'\big|\big|_0^2+\big|\big|\omega^{3/2+\mu}\partial_t^3 v''\big|\big|_0^2\Big)
\leq&
C\big(\hat{E}^{\alpha}+1\big)\left(\hat{M_0}+ CtP\Big(\sup\limits_{[0,t]}\hat{E}\Big)\right)+CtP\Big(\sup\limits_{[0,t]}\hat{E}\Big),
\end{split}\end{equation}
 \begin{equation}
\label{E:5.32abcd}\begin{split}
\sup\limits_{[0,t]}\left(\big|\big|\omega^{1/2+\mu}\partial_t v''\big|\big|_0^2+\big|\big|\omega^{3/2+\mu}
\partial_t v'''\big|\big|_0^2\right)\leq&
C\big(\hat{E}^{\alpha}+1\big)\left(\hat{M_0}+ CtP\Big(\sup\limits_{[0,t]}\hat{E}\Big)\right)+CtP\Big(\sup\limits_{[0,t]}\hat{E}\Big).
\end{split}\end{equation}
\end{proposition}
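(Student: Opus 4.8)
The plan is to read \eqref{E:5.32a} and \eqref{E:5.32abcd} as elliptic-type recovery estimates for the spatial derivatives of $\partial_t^k v$, extracted from the $\partial_t^{k+1}$-problem \eqref{E:5.15a} with $k=3$ for \eqref{E:5.32a} and $k=1$ for \eqref{E:5.32abcd}, and fed into Lemma \ref{L:3.2}. As already observed after \eqref{E:5.15a}, applying that lemma directly controls only $\omega^{2+2\mu}\partial_t^k v'$, whose weight is far too heavy for $\hat E$. The device is to \emph{retarget the weight} by multiplying by fixed powers of $\omega$: since $\omega$ is independent of $t$, such a multiplier commutes with $\partial_t$ and therefore preserves the structure $f+\tfrac{\varepsilon}{\gamma}f_t=g$ required by Lemma \ref{L:3.2}.

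Concretely, I would first integrate \eqref{E:5.15a} once in $x$; since $\omega$ vanishes on $\Gamma$ the boundary term drops and one obtains $W-\tfrac{\varepsilon}{\gamma}W_t=\bar g$ with $W=\omega^{2+2\mu}\partial_t^k v'e^{S_0}$ and $\bar g=\int_0^x g$. Multiplying by $\omega^{-3/2-\mu}$ turns this into an equation for $f=\omega^{1/2+\mu}\partial_t^k v'e^{S_0}$, and Lemma \ref{L:3.2} at regularity $s=0$ yields the first quantity $\|\omega^{1/2+\mu}\partial_t^k v'\|_0$. Multiplying instead by $\omega^{-1/2-\mu}$ gives an equation for $\hat f=\omega^{3/2+\mu}\partial_t^k v'$, and Lemma \ref{L:3.2} at $s=1$ controls $\|\hat f'\|_0$; writing $\omega^{3/2+\mu}\partial_t^k v''=\hat f'-(3/2+\mu)\omega^{1/2+\mu}\omega'\partial_t^k v'$ and using that $\omega,\omega'$ are bounded (\eqref{E:1.8}) then produces the second quantity. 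The point of carrying the heavier weight $\omega^{3/2+\mu}$ on $\partial_t^k v''$ is exactly that the correction term keeps the \emph{non-singular} weight $\omega^{1/2+\mu}$, which is harmless precisely because $\tfrac12+\mu>\tfrac14$, i.e. $\gamma<3$. For \eqref{E:5.32abcd} the same two-multiplier scheme is run after differentiating \eqref{E:5.15a} once more in $x$, so that $\partial_t v''=(\partial_t v')'$ becomes the base object; the extra spatial derivative of the weight $\omega^{2+2\mu}$ lowers its power and is what makes \eqref{E:5.32abcd} genuinely harder than \eqref{E:5.32a}.

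It remains to bound the forcing in $H^s$ by the energy, and this is where most of the work sits. The term $\tfrac1\gamma\omega^{1+2\mu}\partial_t^{k+2}v$ in $g$ is the only genuinely top-order contribution and must be matched against the highest norms already controlled in Propositions \ref{L:5.1}--\ref{L:5.3} (for $k=3$, against $\|\omega^{1/2+\mu}\partial_t^5 v\|_0$); the weighted embedding \eqref{E:3.1a} together with the range $-\tfrac14<\mu\le0$ is exactly what makes the weight exponents line up. The commutator terms $\sum_i c_i\,\omega^{2+2\mu}\partial_t^i(\eta')^{-\gamma-1}\partial_t^{k-i}v'e^{S_0}$ are products of strictly lower-order factors, each handled by \eqref{E:3.2a} and \eqref{E:3.3b} together with the lower-order energy bounds, and it is these that generate the sub-unit power $\hat E^\alpha$. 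The remaining terms carry the factors $1-(\eta')^{-\gamma-1}=O(t)$ and $\eta''=\int_0^t v''$, which supply the $CtP(\sup_{[0,t]}\hat E)$ contribution. Combining with the initial-data bound $\|f(0)\|_s\le \hat M_0=P(\hat E(0))$ from \eqref{E:4.2} and the $\max$ in Lemma \ref{L:3.2} gives the stated form $C(\hat E^\alpha+1)(\hat M_0+CtP(\sup_{[0,t]}\hat E))+CtP(\sup_{[0,t]}\hat E)$.

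The step I expect to be the main obstacle is the appearance of the singular negative weight. In \eqref{E:5.32abcd}, differentiating once more in $x$ produces a contamination term whose weight is $\omega^{-(1/2-\mu)}$, and to absorb it one must estimate $\|\omega^{-(1/2-\mu)}\|_{L^\beta}$ by H\"older against a suitable $L^{\beta'}$ norm of a lower-order derivative of $v$. Finiteness of this weight forces $\beta(\tfrac12-\mu)<1$, while controlling the companion $L^{\beta'}$ factor by $\hat E$ forces $\beta$ not too small; threading this needle for every admissible $\mu\in(-\tfrac14,0]$ is the delicate point, and is precisely the estimate recorded in \eqref{E:5.25dda}. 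Once $\beta$ is fixed the two estimates close, completing the proof of Proposition \ref{L:5.4}.
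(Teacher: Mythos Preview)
Your high-level plan---read the spatial estimates off the $\partial_t^{k+1}$-problem \eqref{E:5.15a}, retarget the weight by a $t$-independent power of $\omega$, and feed into Lemma~\ref{L:3.2}---is the paper's strategy. But the implementation you sketch is not the one the paper uses, and yours carries extra baggage.

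The paper does \emph{not} integrate \eqref{E:5.15a} in $x$. It multiplies \eqref{E:5.15a} directly by the single factor $\omega^{-(1/2+\mu)}$ and applies Lemma~\ref{L:3.2} at $s=0$ to
\[
F:=\omega^{-(1/2+\mu)}\bigl(\omega^{2+2\mu}\partial_t^3 v'\,e^{S_0}\bigr)'.
\]
Both target norms are then recovered at once by expanding $\|F\|_0^2$: the product rule gives the three pieces $(2+2\mu)\omega^{1/2+\mu}\omega'\partial_t^3 v'$, $\omega^{3/2+\mu}\partial_t^3 v''$, and $\omega^{3/2+\mu}\partial_t^3 v' S_0'$, whose squares dominate $\|\omega^{1/2+\mu}\partial_t^3 v'\|_0^2+\|\omega^{3/2+\mu}\partial_t^3 v''\|_0^2$ while the cross terms carry an extra $\omega^{1/2}$ and are absorbed as lower order. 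This single-multiplier trick is the efficient point you are missing.

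Your two-step route (integrate to $W$, then multiply by $\omega^{-3/2-\mu}$ and separately by $\omega^{-1/2-\mu}$, invoking the lemma at $s=0$ and $s=1$) can in principle be made to work, but it forces you through a weighted Hardy inequality to control $\|\omega^{-3/2-\mu}\bar g\|_0$ (for the non-total-derivative pieces of $g$, notably $\omega^{1+2\mu}\partial_t^5 v$), and the $s=1$ step requires $\omega^{-1/2-\mu}\bar g\in H^1$, which again produces the singular weight via $(\omega^{-1/2-\mu})'$. None of this is set up in the paper, and your sketch does not address it. The paper's direct approach avoids all of it: after multiplying by $\omega^{-(1/2+\mu)}$, the leading forcing is simply $\omega^{1/2+\mu}\partial_t^5 v$, already bounded by Proposition~\ref{L:5.1}.

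One further inaccuracy: the singular-weight estimate \eqref{E:5.25dda} does not enter where you place it (as a consequence of differentiating once more in $x$ for \eqref{E:5.32abcd}). In the paper it already appears in the proof of \eqref{E:5.32a}, inside the commutator block $I_7$ (specifically the $(\partial_t v')^2$ subterm $I_{78}$), where one needs $\|\omega^{1/2+\mu}\partial_t v'\|_{L^\infty}\le C\|\omega^{1/2+\mu}\partial_t v'\|_{3/4}$ and then interpolates; the $L^\beta$ bound on $\omega^{-(1/2-\mu)}$ with $\beta(\tfrac12-\mu)<1$ is exactly what closes that step. For \eqref{E:5.32abcd} the paper simply repeats the same single-multiplier argument on the $x$-differentiated $\partial_t^2$-problem, with the top forcing now $\omega^{1/2+\mu}\partial_t^3 v'$, which was just bounded in \eqref{E:5.32a}.
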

\begin{proof}
First, choosing $k=3$ in \eqref{E:5.15a}, and multiplying both sides of \eqref{E:5.15a} by
$\omega^{-(1/2+\mu)}$, we have
\begin{equation}\label{E:5.15}\begin{split}
\omega^{-(1/2+\mu)}\big(\omega^{2+2\mu}\partial^3_t v'e^{S_0}\big)'&-
\frac{\varepsilon}{\gamma}\partial_t\Big[\omega^{-(1/2+\mu)}\big(\omega^{2+2\mu}\partial_t^3 v'
e^{S_0}\big)'\Big]\\
=&-\frac{1}{\gamma}\omega^{1/2+\mu}\partial_t^{5}v-(\gamma+1)\frac{\omega^{3/2+\mu}\partial_t^3 v'e^{S_0}\eta''}{(\eta')^{\gamma+2}}\\
&+\sum_{i=1}^3c_i\omega^{-(1/2+\mu)}\left(\omega^{2+2\mu}\partial_t^i
\frac{1}{(\eta')^{\gamma+1}}\partial^{3-i}_tv'e^{S_0}\right)'\\
&+\omega^{-(1/2+\mu)}\left[\left(1-\frac{1}{(\eta')^{\gamma+1}}\right)\big(\omega^{2+2\mu}
\partial_t^{3}v'e^{S_0}\big)'\right].\\
\end{split}\end{equation}
Using Lemma \ref{L:3.2} and fundamental theorem of calculus for the
terms on the right-hand side of \eqref{E:5.15}, we obtain that for any $t\in[0,T^\varepsilon]$,
\begin{equation}\label{E:5.19}\begin{split}
\sup\limits_{[0,t]}\Big|\Big|\omega^{-(1/2+\mu)}\big(\omega^{2+2\mu}\partial_t^3v'e^{S_0}\big)'\Big|\Big|_{0}
\leq&  C\sup\limits_{[0,t]}\big|\big|\omega^{1/2+\mu}\partial^5_t v\big|\big|_{0}+C\sup\limits_{[0,t]}\Bigg|\Bigg|\frac{\omega^{3/2+\mu}\partial_t^3v'e^{S_0}\eta''}{\eta'^{(\gamma+2)}}\Bigg|\Bigg|_{0}\\
&+C\sup\limits_{[0,t]}\sum_{i=1}^3\Bigg|\Bigg|\omega^{-(1/2+\mu)}\left(\omega^{2+2\mu}\partial_t^i
\frac{1}{(\eta')^{\gamma+1}}\partial^{3-i}_tv'e^{S_0}\right)'
\Bigg|\Bigg|_{0}\\
&+C\sup\limits_{[0,t]}\Bigg|\Bigg|\omega^{-(1/2+\mu)}
\left[\left(1-\frac{1}{(\eta')^{\gamma+1}}\right)\big(\omega^{2+2\mu}
\partial_t^{3}v'e^{S_0}\big)'\right]\Bigg|\Bigg|_{0}\\
=:&\sum\limits_{j=5}^8 I_j.\\
\end{split}\end{equation}
We  estimate each term on the right-hand side of \eqref{E:5.19}. Noting that
$
-\frac{1}{4}<\mu\leq0,   \ \text{as}\ 2\leq\gamma<3,
$
and  using the first term of the estimate \eqref{E:5.14c}, one has,  for each $t\in[0,T_\varepsilon]$,
\begin{equation}\begin{split}\label{E:5.14a}
I_5=\sup\limits_{[0,t]}\big|\big|\omega^{1/2+\mu}\partial^5_t v\big|\big|_{0}
\leq C\hat{E}^{\alpha/2}\left(\hat{M}_0+CtP\Big(\sup\limits_{[0,t]}\hat{E}\Big)\right).
\end{split}\end{equation}
The second term $I_6$ on the right-hand side of \eqref{E:5.19} can be estimated as
\begin{equation}\label{E:5.29b}\begin{split}
I_{6}=&\Bigg|\Bigg|\frac{\omega^{3/2+\mu}\partial_t^3 v'\eta''e^{S_0}}{(\eta')^{\gamma+2}}\Bigg|\Bigg|_{0}
\leq C \big|\big|\omega^{3/2+\mu}\partial_t^3 v'\big|\big|_\infty\Big|\Big|\int_0^tv''\Big|\Big|_0\\
\leq& Ct\left(\big|\big|\omega^{1/2+\mu}\omega'\partial_t^3 v'\big|\big|_0+\big|\big|\omega^{3/2+\mu}\partial_t^3 v''\big|\big|_0\right)||v''||_0
\leq C tP\Big(\sup\limits_{[0,t]}\hat{E}\Big).
\end{split}\end{equation}
For the third term, we have,
\begin{equation}\label{E:5.22a}\begin{split}
I_7=&\sum_{i=1}^3\Bigg|\Bigg|\omega^{-(1/2+\mu)}\left(\omega^{2+2\mu}\partial_t^i
\frac{1}{(\eta')^{\gamma+1}}\partial^{3-i}_tv'e^{S_0}\right)'
\Bigg|\Bigg|_{0}\\
\leq& C\big|\big|\omega^{3/2+\mu}
{(v')^3}v''
\big|\big|_{0}
+C\big|\big|\omega^{3/2+\mu}
\big(|{\partial_tv''(v')^2}|+|\partial_t v' v''v'|\big)
\big|\big|_{0}\\
&+C\big|\big|\omega^{3/2+\mu}
\big(|\partial^2_t v''v'|+|\partial_t^2 v'v''|\big)
\big|\big|_{0}
+C\big|\big|\omega^{3/2+\mu}
\partial_t v'\partial_tv''
\big|\big|_{0}\\
&+C\big|\big|\big(\omega^{1/2+\mu}\omega'+\omega^{3/2}+\omega^{3/2+\mu}\eta''\big)|(v')^4|
\big|\big|_{0}\\
&+C\big|\big|\big(\omega^{1/2+\mu}\omega'+\omega^{3/2+\mu}+\omega^{3/2+\mu}\eta''\big)
{|\partial_tv'(v')^2|}
\big|\big|_{0}\\
&+C\big|\big|\big(\omega^{1/2+\mu}\omega'+\omega^{3/2+\mu}+\omega^{3/2+\mu}\eta''\big)
{|\partial^2_t v'v'|}
\big|\big|_{0}\\
&+C\big|\big|\big(\omega^{1/2+\mu}\omega'+\omega^{3/2+\mu}+\omega^{3/2+\mu}\eta''\big)
|(\partial_t v')^2|
\big|\big|_{0}\\
=:&\sum_{i=1}^8 I_{7i},\\
\end{split}\end{equation}
where we have used the fact $1/2\leq\eta'\leq 3/2$  and \eqref{E:2.11}.
To obtain the desired estimates, we shall use the following
form of estimates:
\begin{equation}\label{E:5.23abc}\begin{split}
\sum_{j=1}^2\Big\{\big|\big|\omega^{3/2+\mu}\partial_t^{4-2j}\partial_x^{j+1}v\big|\big|_0^2
&+\sum_{i=1}^j\big|\big|\omega^{1/2+\mu}\partial_t^{4-2j}\partial_x^i v\big|\big|_0^2
+\sum_{i=-1}^j\big|\big|\omega^{1+\mu}\partial_t^{3-2j}\partial_x^{i+1} v\big|\big|_0^2\Big\}\\
\leq & C\sum_{j=1}^2\Big\{\Big|\Big|\omega^{3/2+\mu}u^{(j+1)}_{4-2j}+\int_0^t\omega^{3/2+\mu}\partial_t^{5-2j}
\partial_x^{j+1}v\Big|\Big|_0^2\\
&+\sum_{i=1}^j\Big|\Big|\omega^{1/2+\mu} u^{(i)}_{4-2j}+\int_0^t\omega^{1/2+\mu}\partial_t^{5-2j}\partial_x^i v\Big|\Big|_0^2\\
&+\sum_{i=-1}^j\Big|\Big|\omega^{1+\mu} u^{(i+1)}_{3-2j}+\int_0^t\omega^{1/2+\mu}
\partial_t^{4-2j}\partial_x^{i+1}v\Big|\Big|_0^2\Big\}\\
\leq & \hat{M_0}+CtP\Big(\sup\limits_{[0,t]}\hat{E}\Big).\\
\end{split}\end{equation}

For $I_{71}$, using \eqref{E:3.3b}, we have
\begin{equation}\label{E:5.23a}\begin{split}
I_{71}=&\big|\big|\omega^{3/2+\mu}
{(v')^3v''}\big|\big|_{0}\leq\big|\big|\omega^{3/2+\mu}
{v''}\big|\big|_{\infty}||v'||^3_{L^6}\\
\leq& C\big|\big|\omega^{3/2+\mu}v''
\big|\big|_{1}||v'||^3_{1/2}
\leq \hat{M_0}+C tP\Big(\sup\limits_{[0,t]}\hat{E}\Big),
\end{split}\end{equation}
where we have used the physical vacuum condition \eqref{E:1.8} and \eqref{E:5.23abc}, as well as the following two estimates:
\begin{equation*}\label{E:5.23b}\begin{split}
\big|\big|\omega^{3/2+\mu}
{v''}\big|\big|_{\infty}\leq&\big|\big|\omega^{3/2+\mu}
{v''}\big|\big|_{1}\leq C\big|\big|\omega^{1/2+\mu}\omega'
{v''}+\omega^{3/2+\mu}
{v'''}\big|\big|_{0}\leq \hat{M_0}+C tP\Big(\sup\limits_{[0,t]}\hat{E}\Big)\\
\end{split}\end{equation*}
and
\begin{equation}\label{E:5.23c}\begin{split}
||v'||_{1/2}\leq&\Big|\Big|u_0'+\int_0^t\partial_t v'
\Big|\Big|_{1/2}
\leq C\hat{M_0}+C tP\Big(\sup\limits_{[0,t]}\hat{E}\Big).\\
\end{split}\end{equation}
For $I_{72}$, using \eqref{E:5.23c}, we have
\begin{equation*}\label{E:5.24a}\begin{split}
I_{72}=&\big|\big|\omega^{3/2+\mu}
\big({\partial_tv''(v')^2}+\partial_t v' v''v'\big)
\big|\big|_{0}\\
\leq& C\big|\big|
\omega^{\frac{3}{2}+\mu}\partial_t v''\big|\big|_{L^4}||v'||^2_{L^8}
+C\big|\big|
\omega^{\frac{3}{2}+\mu} v''\big|\big|_{\infty}\big|\big|\partial_tv'\big|\big|_{0}||v'||_\infty\\
\leq& C\big|\big|\omega^{\frac{3}{2}+\mu}\partial_t v''\big|\big|^{1/2}_{0}\big|\big|\omega^{\frac{3}{2}+\mu}\partial_t v''\big|\big|^{1/2}_{1}||v'||^2_{1/2}+C\big|\big|
\omega^{\frac{3}{2}+\mu} v''\big|\big|_{1}\big|\big|\partial_tv'\big|\big|_{0}||v'||^{1/2}_{1/2}||v'||^{1/2}_1\\
\leq& \big(1+\hat{E}^{1/4}\big)\left(\hat{M_0}+ tP\Big(\sup\limits_{[0,t]}\hat{E}\Big)\right),\\
\end{split}\end{equation*}
where we used \eqref{E:5.23abc} to derive the following two estimates
$$
\big|\big|\omega^{\frac{3}{2}+\mu}\partial_tv''\big|\big|^{1/2}_0\leq C
||\omega||_\infty^{\frac{1}{4}}\big|\big|\omega^{1+\mu}
\partial_t v''\big|\big|_0^{1/2}\leq \hat{M_0}+ tP\Big(\sup\limits_{[0,t]}\hat{E}\Big),
$$
\begin{equation}\begin{split}\label{E:5.24abcd}
||\partial_t v'||_{0}\leq C\Big|\Big|u_1'+\int_0^t
\partial^2_t v'\Big|\Big|_{0}\leq\hat{M_0}+C tP\Big(\sup\limits_{[0,t]}\hat{E}\Big),
\end{split}\end{equation}
and used the physical vacuum condition \eqref{E:1.8} to get
$$
\big|\big|
\omega^{3/2+\mu} \partial_tv''\big|\big|_{1}^{1/2}=\big|\big|
\omega^{1/2+\mu}\omega' \partial_tv''+
\omega^{3/2+\mu}\partial_t v'''\big|\big|_{0}^{1/2}
\leq  C\hat{E}^{1/4}.
$$
Using \eqref{E:5.23abc} to estimate $\big|\big|\omega^{3/2+\mu}
{\partial^2_t v''}
\big|\big|_{0}, \big|\big|\omega^{1/2+\mu}\partial_t^2 v'\big|\big|_0$
and $\big|\big|\omega^{1/2+\mu}
{ v''}\big|\big|_{0}$, one has, 
\begin{equation*}\label{E:5.25a}\begin{split}
I_{73}=&\big|\big|\omega^{3/2+\mu}
\big(\partial^2_t v''v'+\partial_t^2 v' v''\big)
\big|\big|_{0}\\
\leq& C\big|\big|\omega^{3/2+\mu}
{\partial^2_t v''}
\big|\big|_{0}||v'||_\infty+C\big|\big|\omega^{1/2+\mu}\partial_t^2 v'\big|\big|_0\big|\big|\omega v''\big|\big|_\infty\\
\leq &\big(\hat{E}^{3/8}+1\big)\left(\hat{M_0}+C tP\Big(\sup\limits_{[0,t]}\hat{E}\Big)\right),\\
\end{split}\end{equation*}
where we have used the fact that
\begin{equation*}\label{E:5.25aaa}\begin{split}
\big|\big|\omega v''\big|\big|_\infty\leq& C||\omega||^{-\mu/4}\big|\big|\omega^{1+\mu} v''\big|\big|^{1/4}_0
\big|\big| v''+\omega v'''\big|\big|^{3/4}_0\\
\leq & C\big|\big|\omega^{1/2+\mu} v''\big|\big|^{1/4}_0
\Big(||v''||_0^{3/4}+||\omega||^{-3\mu/4}\big|\big|\omega^{1+\mu}  v'''\big|\big|_0^{3/4}\big)\\
\leq & \hat{E}^{3/8}\left(\hat{M_0}+CtP\Big(\sup\limits_{[0,t]}\hat{E}\Big)\right).
\end{split}\end{equation*}
For $I_{74}$, using \eqref{E:5.24abcd} and \eqref{E:5.23abc} to estimate $\big|\big|\omega^{1+\mu}\partial_t v''\big|\big|_0$, we have
\begin{equation*}\label{E:5.25db}\begin{split}
I_{74}&=\big|\big|\omega^{3/2+\mu}
\partial_t v'\partial_t v''
\big|\big|_{0}
\leq C\big|\big|
\omega^{3/2+\mu}\partial_tv''
\big|\big|_{\infty}||{\partial_t v'}||_{0}\\
\leq&C\big|\big|
\omega^{3/2+\mu}\partial_tv''
\big|\big|^{1/4}_{0}\big|\big|
\omega^{3/2+\mu}\partial_tv''
\big|\big|^{3/4}_{1}||{\partial_t v'}||_{0}\\
\leq &C||\omega||^{1/8}_\infty\big|\big|\omega^{1+\mu}\partial_t v''\big|\big|^{1/4}_0
\big|\big| \omega^{1/2+\mu}
{\partial_t v''}+\omega^{3/2}\partial_tv'''
\big|\big|^{3/4}_{0}||{\partial_t v'}||_{0}\\
\leq& \hat{E}^{\frac{3}{8}}\left(\hat{M_0}+ CtP\Big(\sup\limits_{[0,t]}\hat{E}\Big)\right).
\end{split}\end{equation*}
For $I_{75},$ using \eqref{E:5.23c} to estimate $||v'||_{L^8}$, we have
\begin{equation*}\label{E:5.25bb}\begin{split}
I_{75}=&\big|\big|\big(\omega^{1/2+\mu}\omega'+\omega^{3/2+\mu}+\omega^{3/2+\mu}\eta''\big)(v')^4
\big|\big|_{0}\\
\leq& C\big|\big|\omega^{1/2+\mu}
\big|\big|_{\infty}||v'||^4_{L^8}+C\Big|\Big|\int_0^t \omega^{1+\mu}
{ v''}
\Big|\Big|_{0}||v'||^4_\infty||\omega||^{3/2}_\infty\\
\leq&\hat{M_0}+ CtP\Big(\sup\limits_{[0,t]}\hat{E}\Big).\\
\end{split}\end{equation*}
Similarly, using \eqref{E:5.23c} and \eqref{E:5.24abcd} again,  one has
\begin{equation*}\label{E:5.25ac}\begin{split}
I_{76}=&\big|\big|\big(\omega^{1/2+\mu}\omega'+\omega^{3/2+\mu}+\omega^{3/2+\mu}\eta''\big)
{\partial_tv'(v')^2}
\big|\big|_{0}\\
\leq& C\big|\big|\omega^{1/2+\mu}
 \partial_tv'\big|\big|_{L^4}||v'||^2_{L^8}+C\Big|\Big|\int_0^t
\omega^{3/2+\mu}{ v''}
\Big|\Big|_{L^4}\big|\big|
 \partial_tv'\big|\big|_{L^4}||v'||^2_\infty\\
\leq& \hat{M_0}+C tP\Big(\sup\limits_{[0,t]}\hat{E}\Big),\\
\end{split}\end{equation*}
where we have used \eqref{E:3.1a},  the method similar  to  \eqref{E:5.11bc} to deal with $\big|\big|\omega^{1/2+\mu}
 \partial_tv'\big|\big|_{1/2}$, and \eqref{E:5.23abc} to estimate $\big|\big|\omega^{1+\mu}\partial_t v'\big|\big|_0$,  as well as
\begin{equation*}\label{E:5.25acd}\begin{split}
\big|\big|\omega^{1/2+\mu}
 \partial_tv'\big|\big|_{L^4}\leq&C \big|\big|\omega^{1/2+\mu}
 \partial_tv'\big|\big|_{1/2}\leq C\big|\big|\omega^{\mu}
 \partial_tv'+\omega^{1+\mu}\partial_tv''\big|\big|_{0}
 \leq \hat{M_0}+C tP\Big(\sup\limits_{[0,t]}\hat{E}\Big)
\end{split}\end{equation*}
and
$$
\Big|\Big|\int_0^t
\omega^{3/2+\mu}{ v''}
\Big|\Big|_{L^4}\leq \Big|\Big|\int_0^t
\omega^{3/2+\mu}{ v''}
\Big|\Big|_{1}
\leq \Big|\Big|\int_0^t
\big(\omega^{1/2+\mu}\omega'{ v''}
+\omega^{3/2+\mu}{ v'''}\big)
\Big|\Big|_{0}\leq CtP\Big(\sup\limits_{[0,t]}\hat{E}\Big).
$$
For $I_{77},$  using \eqref{E:5.23abc} to deal with $\big|\big|\omega^{1/2+\mu}
 \partial^2_tv' \big|\big|_{0}$ and \eqref{E:5.23c}, we have
\begin{equation*}\label{E:5.25bd7}\begin{split}
I_{77}=&\big|\big|\big(\omega^{1/2+\mu}\omega'+\omega^{3/2+\mu}+\omega^{3/2+\mu}\eta''\big)
{\partial^2_t v'v'}
\big|\big|_{0}\\
\leq& C\big|\big|\omega^{1/2+\mu}
 \partial^2_tv'\big|\big|_{0}||v'||_\infty\left(1+\Big|\Big|\int_0^t  v''\Big|\Big|_0\right)\\
\leq& \hat{E}^{1/4}\left(\hat{M_0}+C tP\Big(\sup\limits_{[0,t]}\hat{E}\Big)\right)
+Ct P\Big(\sup\limits_{[0,t]}\hat{E}\Big).\\
\end{split}\end{equation*}
For $I_{78}$, using \eqref{E:5.24abcd}, we have
\begin{equation}\label{E:5.25bd8}\begin{split}
I_{78}=&\big|\big|\big(\omega^{1/2+\mu}\omega'+\omega^{3/2+\mu}+\omega^{3/2+\mu}\eta''\big)
(\partial_t v')^2
\big|\big|_{0}\\
\leq& C\big|\big|\omega^{1/2+\mu}
 \partial_tv'\big|\big|_{\infty}||\partial_tv'||_0+C\big|\big|\omega^{3/2+\mu}
 \partial_tv'\big|\big|_{\infty}||\partial_tv'||_0\\
 &+C\Big|\Big|\int_0^t \omega^{3/2+\mu} v''\Big|\Big|_\infty\big|
 \big|\partial_t v'\big|\big|^2_{L^4}\\
\leq& C\big|\big|\omega^{1/2+\mu}
 \partial_tv'\big|\big|_{3/4}||\partial_tv'||_0+C\big|\big|\omega^{3/2+\mu}
 \partial_t v'\big|\big|_{1}||\partial_tv'||_0\\
 &+C t\big|\big|\omega^{3/2+\mu} v''\big|\big|_1\big|
 \big|\partial_t v'\big|\big|^2_{1/2}\\
 \leq& C\Big|\Big|u_1'+\int_0^t \partial^2_t v'\Big|\Big|_0^{1-\alpha}\Big|\Big|\left(\omega^{1/2}
 \partial_tv'\right)'\Big|\Big|^\alpha_{L^{r}}||\partial_tv'||_0\\
 &+C\left(\big|\big|\omega^{1/2+\mu}\omega'
 \partial_tv'\big|\big|_{0}+\big|\big|\omega^{1/2+\mu}\omega
 \partial_tv''\big|\big|_{0}\right)||\partial_tv'||_0\\
 &+C t \left(\big|\big|\omega^{1/2+\mu}\omega'
v''\big|\big|_{0}+\big|\big|\omega^{3/2+\mu}
 v'''\big|\big|_{0}\right)\big|
 \big|\partial_t v'\big|\big|^2_{1/2}\\
\leq& \Big(\hat{E}^{\frac{\alpha}{2}}+1 \Big)\left(\hat{M_0}+C tP\Big(\sup\limits_{[0,t]}\hat{E}\Big)\right)+C tP\Big(\sup\limits_{[0,t]}\hat{E}\Big),\\
\end{split}\end{equation}
where $\alpha_0<\alpha<1$ with $\frac{3}{4}\leq\alpha_0=\frac{3}{4(1+\mu)}=\frac{3}{2}\big(1-\frac{1}{\gamma}\big)<1$ as $2\leq  \gamma <3$ and $\frac{1}{r}=\frac{3}{2}-\frac{3}{4\alpha}$, and we have used the following estimate:
\begin{equation}\label{E:5.25dda}\begin{split}
\Bigg|\Bigg|\left(\omega^{1/2+\mu}
 \partial_tv'\right)'\Bigg|\Bigg|_{L^{r}}&\leq C\Bigg|\Bigg|\frac{\partial_t v'}{\omega^{1/2+\mu}}\Bigg|\Bigg|_{L^{r}}
+C\big|\big|\omega^{1/2+\mu}
 \partial_tv''\big|\big|_{0}\\
 &\leq C\Big|\Big|\frac{1}{\omega^{1/2-\mu}}\Big|\Big|_{L^{\beta}}||\partial_t v'||_{L^{\beta'}}
+C\big|\big|\omega^{1/2+\mu}
 \partial_tv''\big|\big|_{0}\\
 &\leq C\Big|\Big|\frac{1}{\omega^{1/2-\mu}}\Big|\Big|_{L^{\beta}}||\partial_t v'||_{1/2}
+C||\omega||^{-\mu}_\infty\big|\big|\omega^{1/2+\mu}
 \partial_tv''\big|\big|_{0},\\
\end{split}\end{equation}
where, to ensure $\Big|\Big|\frac{1}{\omega^{1/2-\mu}}\Big|\Big|_{L^{\beta}}$ be meaningful, we need $\beta\big(\frac{1}{2}-\mu\big))<1$, so we choose $\frac{1}{\beta}\in\big(\frac{1}{2}-\mu,\frac{1}{r}\big)$,  and
$
\frac{1}{r}=\frac{1}{\beta}+\frac{1}{\beta'}$, with  $\beta'>1$ and 
$||\partial_t v'||_{L^{\beta'}}\leq||\partial_t v'||_{1/2}$ in \eqref{E:3.2a}.
This analysis is different from the isentropic case of $\gamma=2$.
Substituting \eqref{E:5.23a}-\eqref{E:5.25bd8} into \eqref{E:5.22a}, we have
\begin{equation}\label{E:5.22}\begin{split}
I_7\leq \big(\hat{E}^{\frac{\alpha}{2}}+1\big)\left(\hat{M_0}+ C tP\Big(\sup\limits_{[0,t]}\hat{E}\Big)\right)+ CtP\Big(\sup\limits_{[0,t]}\hat{E}\Big).
\end{split}\end{equation}

Now for the last term on the right-hand side of \eqref{E:5.19}, due to
\begin{equation}\label{E:5.22ab}
1-\frac{1}{\eta'^{(\gamma+1)}}=(\gamma+1)\int_0^t\frac{v'}{\eta'^{(\gamma+2)}},
\end{equation}
it can be estimated as
\begin{equation}\label{E:5.23}\begin{split}
I_8=&\Bigg|\Bigg|\omega^{-(1/2+\mu)}
\left[\left(1-\frac{1}{(\eta')^{\gamma+1}}\right)\big(\omega^{2+2\mu}
\partial_t^{3}v'e^{S_0}\big)'\right]\Bigg|\Bigg|_{0}\\
\leq& C \Bigg|\Bigg|(\gamma+1)\int_0^t\frac{v'}{(\eta')^{\gamma+2}}\Bigg|\Bigg|_{\infty}\Big(\big|\big|
\omega^{1/2+\mu}\partial^3_t v'\big|\big|_{0}||\omega'||_\infty+\Big|\Big|
\omega^{1/2+\mu}\partial^3_t v''\Big|\Big|_{0}||\omega||^{1/2}_\infty\\
&+\Big|\Big|
\omega^{1/2+\mu}\partial^3_t v'\Big|\Big|_{0}||\omega||^{1/2}_\infty\Big)
\leq C tP\Big(\sup\limits_{[0,t]}\hat{E}\Big).\\
\end{split}\end{equation}

%
Substituting \eqref{E:5.14a}, \eqref{E:5.29b}, \eqref{E:5.22} and \eqref{E:5.23} into \eqref{E:5.19} leads to
\begin{equation}\label{E:5.29}\begin{split}
\sup\limits_{t\in[0,t]}&\Big|\Big|\omega^{-(1/2+\mu)}\big(\omega^{2+2\mu}\partial_t^3v'e^{S_0}\big)'\Big|\Big|^2_{0}\\
\leq& C\big(\hat{E}^{\alpha/2}+1\big)^2\left(\hat{M_0}+ CtP\Big(\sup\limits_{[0,t]}\hat{E}\Big)\right)+C tP\Big(\sup\limits_{[0,t]}\hat{E}\Big).
\end{split}\end{equation}
Expanding the left-hand side of \eqref{E:5.29} and using physical vacuum condition \eqref{E:1.8}, we have
\begin{equation*}\label{E:5.31a}\begin{split}
\sup\limits_{t\in[0,t]}\Big|\Big|\omega^{-(1/2+\mu)}\big(\omega^{2+2\mu}\partial_t^3v'e^{S_0}\big)'
\Big|\Big|^2_{0}
\geq&{4(1+\mu)^2}\big|\big|\omega^{1/2+\mu}\omega'\partial_t^3v'e^{S_0}\big|\big|_0^2+\big|\big|\omega^{3/2+\mu}\partial_t^3 v''e^{S_0}\big|\big|_0^2\\
&+\big|\big|\omega^{3/2+\mu}\partial_t^3 v'e^{S_0} S_0'\omega^{-\mu}\big|\big|_0^2\\
&+{4(1+\mu)}\int_I\omega^{2+\mu}\partial_t^3v'\partial_t^3v''\omega'
\exp 2S_0\\
&+4(1+\mu)\int_I\omega^{2+2\mu}|\partial_t^3v'|^2\omega'\exp 2S_0 S_0'\\
&+2\int_I\omega^{3+2\mu}\partial_t^3v'\partial_t^3v''\exp 2S_0\\
\geq& C\big|\big|\omega^{1/2+\mu}\partial_t^3v'\big|\big|_0^2
+C\big|\big|\omega^{3/2+\mu}\partial_t^3v''\big|\big|_0^2\\
&-C(1+||\omega||_\infty)\Big(\big|\big|\omega^{1+\mu}\partial_t^3v'\big|\big|_0^2
+\big|\big|\omega^{1+\mu}\partial_t^3v'\big|\big|_0^2\Big)\\
\geq& C\big|\big|\omega^{1/2+\mu}\partial_t^3v'\big|\big|_0^2
+C\big|\big|\omega^{3/2+\mu}\partial_t^3v''\big|\big|_0^2\\
&-\hat{M}_0-CtP\Big(\sup\limits_{[0,t]}\hat{E}\Big),
\end{split}\end{equation*}
and thus, by \eqref{E:5.29},
\begin{equation}\label{E:5.32}\begin{split}
\sup\limits_{[0,t]}&\Big(\big|\big|\omega^{1/2+\mu}\partial_t^3 v'\big|\big|_0^2+\big|\big|\omega^{3/2+\mu}\partial_t^3 v''\big|\big|_0^2\Big)\\
&\leq
C\big(\hat{E}^{\alpha}+1\big)\left(\hat{M_0}+ CtP\Big(\sup\limits_{[0,t]}\hat{E}\Big)\right)+CtP\Big(\sup\limits_{[0,t]}\hat{E}\Big).
\end{split}\end{equation}

Choosing the multiplier $\omega^{-(1/2+\mu)}$
and letting the first term $\omega^{1+2\mu}\partial_t^k v$ in $g$ be
$\omega^{1+2\mu}\partial_t v'$, using the same method as
that of proving \eqref{E:5.32a}, we can prove \eqref{E:5.32abcd}.
\end{proof}

Similarly,
multiplying both sides of \eqref{E:5.15a} by $\omega^{-\mu}$ and replacing the first term $\omega^{1+2\mu}\partial_t^k v$ in $g$ by
$\omega^{1+2\mu}\partial_t^2 v$, $\omega^{1+2\mu}\partial_t^2 v'$ and $\omega^{1+2\mu}v'$, respectively,  one has
\begin{proposition}\label{L:5.5}
For  $2\leq \gamma<3$, there exists some $\alpha\in (0,1)$, such that the following estimate holds,
\begin{equation*}
\label{E:5.32ab}\begin{split}
&\sup\limits_{[0,t]}\Big(\big|\big|\omega^{1+\mu}\partial_t^2 v'\big|\big|_0^2
+\big|\big|\omega^{2+\mu}\partial_t^2 v''\big|\big|_0^2+\big|\big|\omega^{2+\mu}\partial_t^2 v'''\big|\big|_0^2+\big|\big|\omega^{1+\mu}
\partial_t^2 v''\big|\big|_0^2+\big|\big|\omega^{1+\mu} v''\big|\big|_0^2\\
&\qquad\qquad +\big|\big|\omega^{1+\mu}
v'''\big|\big|_0^2+\big|\big|\omega^{2+\mu} \partial_x^4v\big|\big|_0^2\Big)\\
&\leq
C\big(\hat{E}^{\alpha}+1\big)\left(\hat{M_0}+ tP\Big(\sup\limits_{[0,t]}\hat{E}\Big)\right)+CtP\Big(\sup\limits_{[0,t]}\hat{E}\Big).\\
\end{split}\end{equation*}
\end{proposition}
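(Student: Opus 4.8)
The plan is to reproduce the scheme of Proposition \ref{L:5.4} verbatim in structure, only with the multiplier $\omega^{-(1/2+\mu)}$ replaced by $\omega^{-\mu}$, which raises every weight by half a power of $\omega$ and thereby lands exactly on the $\omega^{1+\mu}/\omega^{2+\mu}$ norms appearing in the statement. First I would multiply \eqref{E:5.15a} by $\omega^{-\mu}$ to put it in the form required by Lemma \ref{L:3.2}, with $f=\omega^{-\mu}\big(\omega^{2+2\mu}\partial_t^k v'e^{S_0}\big)'$ and source $\omega^{-\mu}g$, so that
$$
\sup_{[0,t]}\big\|\omega^{-\mu}\big(\omega^{2+2\mu}\partial_t^k v'e^{S_0}\big)'\big\|_0\le C\max\Big\{\|f(0)\|_0,\ \sup_{[0,t]}\|\omega^{-\mu}g\|_0\Big\}.
$$
I would then run this for the three differentiated versions of \eqref{E:5.15a} indicated in the text, whose leading inertial terms are $\omega^{1+2\mu}\partial_t^2 v$, $\omega^{1+2\mu}\partial_t^2 v'$ and $\omega^{1+2\mu}v'$; expanding $f$ for each (see the third step) produces, between them, the seven weighted norms listed, a short bootstrap being used to pass from the spatial object $v''$ up to $v'''$ and then to $\partial_x^4 v$.

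Second, I would bound $\|\omega^{-\mu}g\|_0$ term by term exactly as in the estimates \eqref{E:5.19}--\eqref{E:5.23}. The leading contribution $\omega^{-\mu}\cdot\big(-\tfrac1\gamma\omega^{1+2\mu}\partial_t^{\,\cdot}v\big)=-\tfrac1\gamma\omega^{1+\mu}\partial_t^{\,\cdot}v$ is controlled directly by the $\varepsilon$-independent energy estimates of Propositions \ref{L:5.1}--\ref{L:5.3}; the commutator terms $\sum_i(\cdots)'$ and the $\eta''$ term are dispatched with the interpolation and weighted-Sobolev inequalities \eqref{E:3.2a}, \eqref{E:3.3b}, \eqref{E:3.1a}, the physical vacuum condition \eqref{E:1.8}, and the auxiliary bounds collected in \eqref{E:5.23abc}, each producing a factor $\hat E^{\alpha}+1$ times $\hat M_0+CtP(\sup\hat E)$. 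The piece carrying $1-(\eta')^{-(\gamma+1)}$ is again rewritten through \eqref{E:5.22ab}, which extracts the small-time factor $\int_0^t v'/(\eta')^{\gamma+2}$ and thus contributes only $CtP(\sup\hat E)$.

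Third, I would expand the left-hand side by the product rule,
$$
\omega^{-\mu}\big(\omega^{2+2\mu}\partial_t^k v'e^{S_0}\big)'=(2+2\mu)\omega^{1+\mu}\omega'e^{S_0}\partial_t^k v'+\omega^{2+\mu}e^{S_0}\partial_t^k v''+\omega^{2+\mu}e^{S_0}S_0'\partial_t^k v',
$$
square and integrate, and use the physical vacuum condition \eqref{E:1.8} (so that $|\omega'|$ is bounded below near $\partial I$, and $\omega$ is bounded below away from it) together with the entropy bound \eqref{E:1.3a} to conclude that $\|f\|_0^2$ dominates $c\big(\|\omega^{1+\mu}\partial_t^k v'\|_0^2+\|\omega^{2+\mu}\partial_t^k v''\|_0^2\big)$ modulo cross terms. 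The genuinely non-isentropic complication is the third summand $\omega^{2+\mu}e^{S_0}S_0'\partial_t^k v'$: its square is bounded by $\bar S^2\|\omega^{2+\mu}\partial_t^k v'\|_0^2\le C\|\omega^{1+\mu}\partial_t^k v'\|_0^2$ and hence is lower order, while its cross product with $\omega^{2+\mu}\partial_t^k v''$ is handled by integration by parts in $x$ (the $\partial_t^k v'\,\partial_t^k v''$ factor being a perfect derivative) and absorbed into $\hat M_0+CtP(\sup\hat E)$.

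The seven terms must be produced in the correct order, since the bound for the top spatial derivative $\omega^{2+\mu}\partial_x^4 v$ (arising from the object $v'''$) consumes the already-established bounds for $\omega^{1+\mu}v''$, $\omega^{1+\mu}v'''$ and the purely temporal norms of Propositions \ref{L:5.1}--\ref{L:5.4}. The hard part, exactly as in Proposition \ref{L:5.4}, is the most singular contribution to $\omega^{-\mu}g$, where one must estimate a term of the type $\|(\omega^{1/2}\cdots)'\|_{L^r}$ through a weighted $L^\beta$ bound on an inverse power of $\omega$, as in \eqref{E:5.25dda}; with the multiplier $\omega^{-\mu}$ the relevant weight is $\omega^{\mu}=\omega^{-|\mu|}$, so the integrability requirement becomes $\beta|\mu|<1$, which is comfortably satisfied because $-\tfrac14<\mu\le0$. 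This is precisely the point at which the argument departs from the isentropic $\gamma=2$ analysis, and once it is settled, collecting the estimates for all choices yields the claimed bound with the same structure $C(\hat E^{\alpha}+1)\big(\hat M_0+tP(\sup\hat E)\big)+CtP(\sup\hat E)$ as in \eqref{E:5.32a}.
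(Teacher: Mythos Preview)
Your proposal is correct and follows exactly the approach the paper takes: the paper's proof consists only of the one-sentence remark that one multiplies \eqref{E:5.15a} by $\omega^{-\mu}$ and replaces the leading inertial term $\omega^{1+2\mu}\partial_t^k v$ in $g$ by $\omega^{1+2\mu}\partial_t^2 v$, $\omega^{1+2\mu}\partial_t^2 v'$ and $\omega^{1+2\mu}v'$, respectively. Your write-up simply fills in the details of this scheme (the product-rule expansion, the handling of the $S_0'$ cross term, and the adjusted integrability condition $\beta|\mu|<1$ in the analogue of \eqref{E:5.25dda}), all of which are accurate.
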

\bigskip

\section{Uniform estimates of $v^\varepsilon$  for  $1<\gamma<2$}

In this section, we shall establish the uniform estimates for
the case   $1<\gamma<2$.
{
As noted in \cite{luo2},  the constant $\gamma$ affects the   degeneracy rate near the vacuum boundary, since $\rho_0$
looks to be the coefficient   of $\partial_t v$ in $\eqref{E:2.7}_1$ and the physical vacuum condition  indicates
that
$$
\rho_0({\eta})\sim \text{dist}({\eta},\partial I)^{\frac{1}{\gamma-1}},\quad \quad {\eta}\rightarrow \partial I.
$$
 Smaller values of $\gamma$  cause   more degeneracy of  $\eqref{E:2.7}_1$  near the vacuum boundary,
then  higher-order derivatives in the energy function are needed to control the $H^2-$norm   and hence the $C^1-$norm of $v$.
From the embedding inequality \eqref{E:3.1a}, the higher energy function $\tilde{E}(t)$
defined in \eqref{E:2.9b} for $1<\gamma<2$ implies that
\begin{equation*}\label{E:7.2}
||v||_2^2\leq ||v||^2_{\frac{l+1}{2}-\left(\frac{1}{2}+\mu\right)}\leq C\sum_{i=0}^{\frac{l+1}{2}}
\big\vert\big\vert\omega^{1/2+\mu}\partial_x ^i v\big\vert\big\vert_0^2\leq C\tilde{E},\quad  l=3+2\lceil
\frac{1}{2}+\mu\rceil.
\end{equation*}
This suggests  that the higher-order energy function $\tilde{E}$ is appropriate for   the physical
vacuum problem \eqref{E:2.7} when $\gamma\in(1,2)$, and as $\gamma\rightarrow 1$ (which means $\mu\rightarrow\infty$) the estimate of $||v||_2^2$ needs infinite higher-order derivatives.
}

\subsection{Energy estimates}

In order to obtain a series of estimates independent of $\varepsilon,$ we first need some energy estimates as in Propositions
\ref{L:5.1}-\ref{L:5.3}.

\begin{proposition}\label{L:7.1}
For $\gamma\in(1,2)$, we have the following $\varepsilon-$independent energy estimate for $\partial_t^{l+1}-$problem of $\eqref{E:3.10}$,
 \begin{equation*}\label{E:5.2aa}\begin{split}
\big|\big|\omega^{1/2+\mu}\partial_t^{l+1} v\big|\big|^2_0+&\big|\big|\omega^{1+\mu}\partial_t^lv'\big|\big|^2_0+\big|\big|\omega^{1+\mu}
\partial_t^lv\big|\big|_0^2
+\varepsilon\int_0^t\big|\big|\omega^{1+\mu}\partial^{l+1}_t v'\big|\big|_0^2\\
\leq& \tilde{E}^\alpha\left( \tilde{M}_0+CtP\Big(\sup\limits_{[0,t]}{\tilde{E}}\Big)\right),\\
\end{split}\end{equation*}
with  $\tilde{M}_0=P(\tilde{E}(0))$ and $P(\cdot)$ some polynomial function.
\end{proposition}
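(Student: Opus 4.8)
The plan is to follow the proof of Proposition \ref{L:5.1} line by line, with the fifth time derivative replaced by the $(l+1)$-st and the fourth by the $l$-th. First I would apply $\partial_t^{l+1}$ to $\eqref{E:3.10}_1$, multiply by $\partial_t^{l+1}v$ and integrate over $I$, integrating by parts in $x$ on the flux term; as in \eqref{E:5.2b} this gives
\begin{equation*}
\frac12\frac{d}{dt}\int_I\omega^{1+2\mu}|\partial_t^{l+1}v|^2
-\int_I\partial_t^{l+1}\!\left(\frac{\omega^{2+2\mu}}{(\eta')^\gamma}\right)\partial_t^{l+1}v'\,e^{S_0}
+\varepsilon\int_I\omega^{2+2\mu}|\partial_t^{l+1}v'|^2e^{S_0}=0,
\end{equation*}
the boundary contributions vanishing because $\omega=0$ on $\Gamma$. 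Since $\omega^{1+2\mu}=(\omega^{1/2+\mu})^2$ and $\omega^{2+2\mu}=(\omega^{1+\mu})^2$, the first term is exactly $\frac12\frac{d}{dt}\|\omega^{1/2+\mu}\partial_t^{l+1}v\|_0^2$ and the $\varepsilon$-term supplies the dissipation $\varepsilon\int_0^t\|\omega^{1+\mu}\partial_t^{l+1}v'\|_0^2$.

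Next I would extract from the middle term its leading piece, the one in which all the time derivatives fall on $(\eta')^{-\gamma}$ and produce $\partial_t^l v'$: exactly as in \eqref{E:5.3} this equals $\frac{\gamma}{2}\frac{d}{dt}\int_I\frac{\omega^{2+2\mu}}{(\eta')^{\gamma+1}}|\partial_t^l v'|^2e^{S_0}$ up to a cubic remainder $\tfrac{(\gamma+1)\gamma}{2}\frac{\omega^{2+2\mu}}{(\eta')^{\gamma+2}}v'|\partial_t^l v'|^2$, and hence produces the left-hand norm $\|\omega^{1+\mu}\partial_t^l v'\|_0^2$ after integration in $t$. Integrating the whole identity over $[0,t]$ gives the analogue of \eqref{E:5.5a}: the initial-data terms are bounded by $\tilde M_0=P(\tilde E(0))$ through the formulas \eqref{E:4.2} for $u_k=\partial_t^k v|_{t=0}$, the cubic remainder is treated just like $I_3$ in \eqref{E:5.6} using $\|v'\|_\infty\le C\|v\|_2\le C\tilde E^{1/2}$ together with $\tfrac12\le\eta'\le\tfrac32$, and the remaining commutator sum is integrated by parts in time as in \eqref{E:5.9}, so that the over-high factor $\partial_t^{l+1}v'$ is traded for $\partial_t^l v'$ plus boundary terms handled as in \eqref{E:5.12a}.

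The essential new difficulty, and the hard part, is that the Leibniz/Fa\`a di Bruno expansion of $\partial_t^{l+1}\big((\eta')^{-\gamma}\big)$ now contains a number of commutator terms that grows with $l$, hence without bound as $\gamma\to1$, so the explicit list $j_1,\dots,j_{10}$ of Proposition \ref{L:5.1} is unavailable. Instead I would establish a \emph{uniform} bound for the generic term
$$
\int_I R(\eta')\,\Big(\prod_r \partial_t^{k_r}v'\Big)\,\omega^{2+2\mu}\partial_t^{l}v'\,e^{S_0},
\qquad \textstyle\sum_r k_r\le l ,
$$
where $R(\eta')$ is a bounded power of $\eta'$. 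The strategy is to place the single highest-order factor $\partial_t^l v'$ in $L^2$ against one weight $\omega^{1+\mu}$, thereby reproducing the energy norm, and to estimate every lower-order factor $\partial_t^{k_r}v'$ ($k_r<l$) in $L^p$ via \eqref{E:3.2a}, \eqref{E:3.3a}, \eqref{E:3.3b} and the weighted embedding \eqref{E:3.1a}, carrying whatever weight $\omega^{1/2+\mu}$, $\omega^{1+\mu}$ or $\omega^{3/2+\mu}$ is needed to match one of the norms in $\tilde E$ from \eqref{E:2.9b}; the residual power of $\omega$ is then pulled out as a bounded $L^\infty$ constant, which is legitimate because $\omega$ is bounded on $\bar I$ even though $\mu\ge0$ may be large. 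The delicate bookkeeping is to verify that, whatever the splitting of $l+1$ into the $k_r$, the weight exponents can always be apportioned so that each factor lands in a norm contained in $\tilde E$; it is precisely to make this possible for all the intermediate orders that the many spatial-derivative terms of $\tilde E$ and the large value of $l$ are needed. Each such term then yields a bound of the form $\tilde E^{\alpha}\big(\tilde M_0+CtP(\sup_{[0,t]}\tilde E)\big)$ for some $\alpha\in(0,1)$, and the same holds for the boundary terms. Finally, writing $\partial_t^l v=u_l+\int_0^t\partial_t^{l+1}v$ and invoking the first, already-controlled norm gives $\|\omega^{1+\mu}\partial_t^l v\|_0^2\le\tilde E^\alpha(\tilde M_0+CtP(\sup_{[0,t]}\tilde E))$, completing the estimate.
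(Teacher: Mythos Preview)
Your proposal is correct and follows essentially the same route as the paper: apply $\partial_t^{l+1}$, multiply by $\partial_t^{l+1}v$, integrate by parts, extract the coercive piece $\frac{\gamma}{2}\frac{d}{dt}\int_I\frac{\omega^{2+2\mu}}{(\eta')^{\gamma+1}}|\partial_t^l v'|^2e^{S_0}$, and then integrate the commutator sum by parts in time before estimating each resulting term against $\tilde E$. The only minor difference is that the paper does not write out a generic-term argument for the commutators $I_{41}$; it simply asserts the analogue of \eqref{E:5.10}--\eqref{E:5.11bae} and defers the concrete bookkeeping to the worked example $\gamma=\tfrac32$ in Subsection~5.2, whereas you sketch how the weights are to be distributed in general.
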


\begin{proof}
Similar to the derivation of \eqref{E:5.2b},
we first take $(l+1)-$th time derivative of equation $\eqref{E:3.10}_1$, then  multiply it by $\partial_t^{l+1} v$ and integrate
this resulting equation with respect to time and space to get
\begin{equation}\label{E:7.5a}\begin{split}
\frac{1}{2}\frac{d}{dt}\int_I\omega^{1+2\mu} |\partial^{l+1}_t v|^2 -\int_I\partial_t^{l+1}\left(\frac{\omega^{2+2\mu}}{(\eta')^\gamma}\right)\partial_t^{l+1}v'e^{S_0}+\varepsilon \int_I
|\omega^{1+\mu}\partial_t^{l+1}v'|^2e^{S_0}=0.
\end{split}\end{equation}
Using similar argument as in \eqref{E:5.3}  to deal with the second item on the left-hand side of \eqref{E:7.5a}, we get
\begin{equation}\label{E:7.7}\begin{split}
\frac{1}{2}\int_I\omega^{1+2\mu} |\partial^{l+1}_t v|^2 &+\frac{\gamma}{2}\int_I \frac{\omega^{2+2\mu}}{(\eta')^{\gamma+1}}|\partial_t^l v'|^2
e^{S_0}+\varepsilon \int_0^t\int_I|\omega^{1+\mu}\partial_t^{l+1}v'|^2\\
=&\frac{1}{2}\int_I\omega^{1+2\mu} |\partial^{l+1}_t v_0|^2+ \frac{\gamma}{2}\int_I \frac{\omega^{2+2\mu}}{(\eta')^{\gamma+1}}|\partial_t^l v'_0|^2
e^{S_0}\\
 &-\frac{\gamma}{\gamma+1}\int_0^t\int_I\omega^{2+2\mu}\frac{v'}{(\eta')^{\gamma+2}}|\partial_t^l v'|^2e^{S_0}\\
&-\sum_{i=1}^lc_i\int_0^t\int_I\omega^{2+2\mu}\partial_t^i\frac{1}{(\eta')^{\gamma+1}}\partial_t^{l-i} v'\partial_t^{l+1}v'e^{S_0}\\
=:&\sum_{i=1}^4I_i.\\
\end{split}\end{equation}
It is obvious   that
$I_1, I_2$ can be controlled by $\tilde{M}_0.$ Now we estimate $I_3$ and $I_4.$
Similar to \eqref{E:5.6}, we have
\begin{equation}\label{E:7.8}\begin{split}
I_3=&-\frac{\gamma}{\gamma+1}\int_0^t\int_I\omega^{2+2\mu}\frac{v'}{(\eta')^{\gamma+2}}|\partial_t^l v'|^2e^{S_0}\\
\leq& C\int_0^t  ||v'||_\infty\big|\big|\omega^{1+\mu}\partial_t^l v'\big|\big|^2_0
\leq C\int_0^t ||v||_2\big|\big|\omega^{1+\mu}\partial_t^l v'\big|\big|^2_0\\
\leq& Ct P\left(\sup\limits_{[0,t]}\tilde{E}\right).
\end{split}\end{equation}
Using integrating by parts in time, we have
\begin{equation}\label{E:7.9}\begin{split}
I_4=&-\sum_{\alpha=1}^lc_i\int_0^t\int_I\omega^{2+2\mu}\partial_t^i\frac{1}{(\eta')^{\gamma+1}}\partial_t^{l-i} v'\partial_t^{l+1}v'e^{S_0}\\
=&\sum_{i=1}^lc_i\int_0^t\int_I\omega^{2+2\mu}\left(\partial_t^i\frac{1}{(\eta')^{\gamma+1}}\partial_t^{l-i} v'\right)_t\partial_t^{l}v'e^{S_0}\\
&-\sum_{i=1}^lc_i\int_I\omega^{2+2\mu}\partial_t^i\frac{1}{(\eta')^{\gamma+1}}\partial_t^{l-i} v'\partial_t^{l}v'e^{S_0}\Big\vert_0^t\\
=:&I_{41}+I_{42}.\\
\end{split}\end{equation}
After detailed computations, using similar analysis as for \eqref{E:5.10}-\eqref{E:5.11bae} and the definition
$\tilde{E}$ in \eqref{E:2.9b}, $I_{41}$ can be estimated as
\begin{equation}\label{E:7.9a}\begin{split}
I_{41}\leq &C\sum_{i=1}^l\int_0^t\int_I\omega^{1+\mu}\Bigg\vert\left(\partial_t^i\frac{1}{(\eta')^{\gamma+1}}\partial_t^{k-i} v'\right)_t\Bigg\vert\big|\omega^{1+\mu}\partial_t^{l}v'\big|\\
\leq& C\int_0^t\Big(\big|\big|\omega^{2+2\mu}\partial_t^lv' v'\big|\big|_0
+\big|\big|\omega^{2+2\mu}\partial_t^{l-1}v' \partial_tv'\big|\big|_0+\cdots\Big)
\big|\big|\omega^{2+2\mu}\partial_t^lv' \big|\big|_0\\
\leq& \tilde{M}_0+Ct P\left(\sup\limits_{[0,t]}\tilde{E}\right).
\end{split}\end{equation}
To show clearly the idea of proving the estimate \eqref{E:7.9a}, we will take $1<\gamma=\frac{3}{2}<2$ as an example
 in the Subsection 5.2.
Using the similar method to \eqref{E:5.12a}, for $0<\alpha<1$, we have
\begin{equation}\label{E:7.13}\begin{split}
I_{42}
\leq \tilde{E}^{{\alpha}}\left(\tilde{M}_0+C t P\left(\sup\limits_{[0,t]}\tilde{E}\right)\right),
\end{split}\end{equation}
where $\tilde{M}_0=P(\tilde{E}(0))$.
Substituting \eqref{E:7.8}-\eqref{E:7.13} into \eqref{E:7.7}, and using the same derivation as \eqref{E:5.36},
we can prove Proposition \ref{L:7.1}.
%

\end{proof}

\subsection{The case of $\gamma=\frac{3}{2}$}
When $\gamma=\frac{3}{2}$, then $\mu=\frac{1}{2}, l=5.$ The higher-order energy function $\tilde{E}(t)$
in \eqref{E:2.9b} is
\begin{equation} \label{E:2.9bc}\begin{split}
\tilde{{E}}(t)=&\big\vert\big|\omega^{1+\mu}\partial_t^5v'(\cdot,t)\big\vert\big|^2_0+
\big\vert\big|\omega^{1+\mu}\partial_t^5v(\cdot,t)\big\vert\big|^2_0\\
&+\sum_{j=1}^{3}\Big\{\big\vert\big|\omega^{3/2+\mu}\partial_t^{6-2j}
\partial_x^{j+1}v(\cdot,t)\big|\big\vert^2_0
+\sum_{i=1}^j\big|\big\vert\omega^{1/2+\mu}
\partial_t^{6-2j}\partial_x^iv(\cdot,t)\big|\big\vert^2_0\Big\}\\
&+\sum_{j=1}^{2}\Big\{\big\vert\big|\omega^{2+\mu}\partial_t^{5-2j}
\partial_x^{j+2}v(\cdot,t)\big|\big\vert^2_0
+\sum_{i=-1}^j\big|\big\vert\omega^{1+\mu}
\partial_t^{5-2j}\partial_x^{i+1}v(\cdot,t)\big|\big\vert^2_0\Big\}.
\end{split}\end{equation}
We note that \eqref{E:2.9bc} also implies the inequality \eqref{E:5.1d}
which will be usually used in the following elliptic estimates.
Now we prove the  energy estimate:
\begin{proposition}\label{L:7.2}
For $\gamma=3/2,$ there exists a constant $\alpha\in(0,1)$,  such that one has the following $\varepsilon-$independent energy estimate on the
$\partial_t^6-$problem of $\eqref{E:3.10}$,
\begin{equation} \label{E:7.15}  
\begin{split}
\big|\big|\omega^{1/2+\mu}\partial_t^6v\big|\big|_0^2&+\big|\big|\omega^{1+\mu}\partial_t^5 v'\big|\big|_0^2
+\big|\big|\omega^{1+\mu}\partial_t^5 v\big|\big|_0^2
+\varepsilon\int_0^t\big|\big|\omega^{1+\mu}\partial_t^6v'\big|\big|_0^2 \\
&\leq  
\tilde{E}^\alpha\left( \tilde{M}_0+CtP\Big(\sup\limits_{[0,t]}{\tilde{E}}\Big)\right),\\
\end{split}\end{equation}
with  $\tilde{M}_0=P(\tilde{E}(0))$ and $P(\cdot)$ some polynomial function.
\end{proposition}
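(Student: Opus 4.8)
The plan is to repeat the energy method of Proposition \ref{L:5.1}, now for the $\partial_t^6$-problem and with the weights corresponding to $\gamma=\frac32$, i.e. $\mu=\frac12$, so that $\omega^{1/2+\mu}=\omega$, $\omega^{1+\mu}=\omega^{3/2}$, $\omega^{3/2+\mu}=\omega^{2}$, $\omega^{2+\mu}=\omega^{5/2}$ and $\omega^{2+2\mu}=\omega^{3}$. First I would differentiate $\eqref{E:3.10}_1$ six times in $t$, multiply by $\partial_t^6 v$ and integrate by parts in space, obtaining, exactly as in \eqref{E:5.2b}, the identity
\begin{equation*}
\frac12\frac{d}{dt}\int_I \omega^{1+2\mu}|\partial_t^6 v|^2 - \int_I \partial_t^6\left(\frac{\omega^{2+2\mu}}{(\eta')^\gamma}\right)\partial_t^6 v'\,e^{S_0} + \varepsilon \int_I \omega^{2+2\mu}|\partial_t^6 v'|^2 e^{S_0} = 0.
\end{equation*}
The only structural change from \eqref{E:5.2b} is the number of commutator terms: expanding $\partial_t^6(\eta')^{-\gamma}$ through $\partial_t(\eta')^{-\gamma}=-\gamma(\eta')^{-\gamma-1}v'$ gives the leading factor $-\gamma(\eta')^{-\gamma-1}\partial_t^5 v'$ together with $\sum_{i=1}^5 c_i\,\partial_t^i(\eta')^{-\gamma-1}\partial_t^{5-i}v'$. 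Following \eqref{E:5.3}, the leading factor converts into the energy term $\frac{\gamma}{2}\frac{d}{dt}\int_I(\eta')^{-\gamma-1}\omega^{2+2\mu}|\partial_t^5 v'|^2 e^{S_0}$ plus the benign term carrying $v'/(\eta')^{\gamma+2}$.

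Next I would integrate in time from $0$ to $t$ and split the right-hand side as $I_1+I_2+I_3+I_4$ as in \eqref{E:5.5a}. The initial-data pieces $I_1,I_2$ are controlled by $\tilde{M}_0$. The term $I_3$ with $v'|\partial_t^5 v'|^2$ is bounded by $C\int_0^t\|v'\|_\infty\|\omega^{1+\mu}\partial_t^5 v'\|_0^2\le C\int_0^t\|v\|_2\|\omega^{1+\mu}\partial_t^5 v'\|_0^2\le CtP(\sup_{[0,t]}\tilde{E})$, using \eqref{E:5.1d} (valid here since \eqref{E:2.9bc} implies it), \eqref{E:3.3b} and $\tfrac12\le\eta'\le\tfrac32$, exactly as in \eqref{E:5.6}. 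The remaining commutator sum $I_4=-\sum_{i=1}^5 c_i\int_0^t\int_I \omega^{2+2\mu}\partial_t^i(\eta')^{-\gamma-1}\partial_t^{5-i}v'\,\partial_t^6 v'\,e^{S_0}$ is integrated by parts in time to remove the top derivative $\partial_t^6 v'$, producing a bulk integral $J$ (in which the time derivative falls on $\partial_t^i(\eta')^{-\gamma-1}\partial_t^{5-i}v'$, leaving $\partial_t^5 v'$) and boundary contributions at $t$ and $0$, mirroring \eqref{E:5.9}.

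The main obstacle — and the reason the authors isolate $\gamma=\frac32$ — is the estimation of $J$, which is the concrete instance of the schematic bound \eqref{E:7.9a}. After the integration by parts, $J$ is a sum of space--time integrals of products of $v'$ and its time derivatives $\partial_t^k v'$ ($k\le 5$) weighted by $\omega^{2+2\mu}=\omega^3$ and powers of $\eta'$. The key is to distribute each product so that one factor carries the top weighted norm $\|\omega^{1+\mu}\partial_t^5 v'\|_0=\|\omega^{3/2}\partial_t^5 v'\|_0$ while the remaining factors are placed in $L^\infty$, $L^4$ or $L^6$ via \eqref{E:3.2a}, \eqref{E:3.3b} and the weighted embedding \eqref{E:3.1a}, each factor being matched to a corresponding term of $\tilde{E}$ in \eqref{E:2.9bc}; the lower-order pieces are recovered from the fundamental theorem of calculus $\partial_t^k v'=u_k'+\int_0^t\partial_t^{k+1}v'$, which supplies the extra factor of $t$ that turns these contributions into $CtP(\sup_{[0,t]}\tilde{E})$. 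The boundary terms are treated as in \eqref{E:5.12a}, yielding the prefactor $\tilde{E}^\alpha$ for some $\alpha\in(0,1)$.

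Finally, writing $\partial_t^5 v=u_5+\int_0^t\partial_t^6 v$ and using the bound on $\|\omega^{1/2+\mu}\partial_t^6 v\|_0$ already obtained from the first term, I would derive $\|\omega^{1+\mu}\partial_t^5 v\|_0^2\le \tilde{E}^\alpha(\tilde{M}_0+CtP(\sup_{[0,t]}\tilde{E}))$ exactly as in \eqref{E:5.36}. Collecting the leading energy terms on the left and all the above bounds on the right gives \eqref{E:7.15} and completes the proof of Proposition \ref{L:7.2}.
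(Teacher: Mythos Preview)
Your proposal is correct and follows essentially the same approach as the paper: the paper likewise reduces to the framework of Proposition~\ref{L:7.1} and singles out the bulk term $I_{41}$ (your $J$) as the only part requiring new work, which it then handles by enumerating fourteen explicit products $j_1,\dots,j_{14}$ and estimating each with the H\"older/Sobolev/weighted-embedding distribution you describe. The remaining steps---the boundary term via \eqref{E:5.12a} and the recovery of $\|\omega^{1+\mu}\partial_t^5 v\|_0^2$ via \eqref{E:5.36}---are exactly as you outline.
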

\begin{proof}
In the above energy estimate for Proposition \ref{L:7.1}, except for
the term $I_{41}$ in \eqref{E:7.9a}, the others are the same as the proof of Proposition \ref{L:7.1}.
Now we focus on the estimates of $I_{41}$. After detailed computations, $I_{41}$ is equal to the sum of the following terms:
$$J:=\int_0^t \sum_{i=1}^{14}J_idt,\quad \text{with}\ J_i=\int_I R(\eta')j_i\omega^{2+2\mu}\partial_t^5 v' dx,$$
where the terms $j_i, i=1,2\cdots ,14 $ are the functions of $v', \partial_t^k v', k=1,2,3,4$ as in the following:
\begin{equation*}\label{E:7.14a}\begin{split}
j_1=&v',\quad j_2=\partial_t^4v'(v')^2,\quad j_3=\partial_t^4v'\partial_t v',\quad j_4=\partial_t^3v'(v')^3,
\quad j_5=\partial_t^3v'\partial_tv'v', \\
j_6=&\partial_t^3v'\partial_t^2v',\quad j_7=\partial_t^2v'(v')^4,\quad j_8=\partial_t^2v'\partial_tv'(v')^2,\quad j_9=\partial_tv'(v')^5,
\quad j_{10}=(\partial_tv')^3v',\\
j_{11}=&(\partial_tv')^2(v')^3,\quad j_{12}=(\partial_tv')^2\partial_t^2v',\quad j_{13}=(v')^7,\quad j_{14}=(\partial_t^2v')^2v'.\\
\end{split}\end{equation*}
For $J_1,$ we have
\begin{equation*}\label{E:7.14b}\begin{split}
|J_1|&\leq C\big|\big|\omega^{1+\mu}\partial_t^5 v'\big|\big|^2_0||v'||_\infty\leq CP\Big(\sup\limits_{[0,t]}\tilde{E}\Big).\\
\end{split}\end{equation*}
For $J_2, J_3, J_4,$ we have
\begin{equation*}\label{E:7.14c}\begin{split}
|J_2|&\leq C\big|\big|\omega^{1+\mu}\partial_t^5 v'\big|\big|_0\big|\big|\omega^{1/2+\mu}\partial_t^4 v'\big|\big|_0||v'||^2_\infty||\omega||^{1/2}_\infty\leq CP\Big(\sup\limits_{[0,t]}\tilde{E}\Big),\\
|J_3|&
\leq C\big|\big|\omega^{1+\mu}\partial_t^5 v'\big|\big|_0\big|\big|\omega^{1+\mu}\partial_t^4v'
\big|\big|_{0}\big|\big|v'\big|\big|^2_{1/2}||v'||_1^{1/2}\leq CP\Big(\sup\limits_{[0,t]}\tilde{E}\Big),\\
|J_4|&\leq C\big|\big|\omega^{1+\mu}\partial_t^5 v'\big|\big|_0\big|\big|
\omega^{1+\mu}\partial_t^3v'\big|\big|_0||v'||^3_{\infty}\leq CP\Big(\sup\limits_{[0,t]}\tilde{E}\Big).\\
\end{split}\end{equation*}
For $J_5,$ we have
\begin{equation*}\label{E:7.14e}\begin{split}
|J_5|&\leq C\big|\big|\omega^{1+\mu}\partial_t^5 v'\big|\big|_0\big|\big|\omega^{1+\mu}\partial_t^3v'
\big|\big|_{L^4}\big|\big|\partial_t v'\big|\big|_{L^4}||v'||_\infty\\
&\leq C\big|\big|\omega^{1+\mu}\partial_t^5 v'\big|\big|_0\big|\big|\omega^{1+\mu}\partial_t^3v'
\big|\big|_{1/2}\big|\big|\partial_t v'\big|\big|_{1/2}||v'||_1\\
&\leq \tilde{M}_0+CtP\Big(\sup\limits_{[0,t]}\tilde{E}\Big),
\end{split}\end{equation*}
where we have used the similar method to \eqref{E:5.11bc} to deal with $||\omega^{1+\mu}\partial_t^3 v'||_{1/2}$.
Similarly, we have
\begin{equation*}\label{E:7.14f}\begin{split}
|J_6|\leq &C \big|\big|\omega^{1+\mu}\partial_t^5v'\big|\big|_0\big|\big|\omega^{1+\mu}\partial_t^3v'\big|\big|_\infty
||\partial_t^2 v'||_0\\
\leq& \big|\big|\omega^{1+\mu}\partial_t^5v'\big|\big|_0\big|\big|\omega^{1+\mu}\partial_t^3v'\big|\big|^{1/2}_{1/2}
\big|\big|\omega^{1+\mu}\partial_t^3v'\big|\big|^{1/2}_{0}||\partial_t^2 v'||_0\\
\leq&\tilde{M}_0+ CtP\Big(\sup\limits_{[0,t]}\tilde{E}\Big),\\
|J_7|\leq& C\big|\big|\omega^{1+\mu}\partial_t^5 v'\big|\big|_0\big|\big|\omega^{1+\mu}\partial_t^2v'
\big|\big|_{0}||v'||^4_{\infty}\leq\tilde{M}_0+  CtP\Big(\sup\limits_{[0,t]}\tilde{E}\Big),\\
|J_8|\leq& C\big|\big|\omega^{1+\mu}\partial_t^5 v'\big|\big|_0\big|\big|\omega^{1+\mu}\partial_t^2v'
\big|\big|_{L^4}\big|\big|\partial_t v'\big|\big|_{L^4}||v'||^2_\infty\\
\leq& C\big|\big|\omega^{1+\mu}\partial_t^5 v'\big|\big|_0\big|\big|\omega^{1/2+\mu}\omega'\partial_t^2v'
+\omega^{3/2+\mu}\partial_t^2v''\big|\big|_{0}\big|\big|\partial_t v'\big|\big|_{1/2}||v'||^2_1\\
\leq&\tilde{M}_0+ CtP\Big(\sup\limits_{[0,t]}\tilde{E}\Big),
\end{split}\end{equation*}
where we have used the fact
$$
\omega^{1+\mu}\partial_t^2 v'=\omega^{1+\mu}u_2'+\int_0^t\omega^{1+\mu}\partial_t^3 v',
$$
and the similar method to \eqref{E:5.11bc} to deal with $||\omega^{1+\mu}\partial_t^2 v'||_{1/2}$.

Using \eqref{E:3.2a}, $J_9, J_{10}, J_{11}, J_{12}, J_{13}$ can be estimated as
\begin{equation*}\label{E:7.14k}\begin{split}
|J_9|&\leq C\big|\big|\omega^{1+\mu}\partial_t^5 v'\big|\big|_0\big|\big|\partial_tv'
\big|\big|_{L^4}||v'||_{L^4}||v'||^4_\infty
\leq \tilde{M}_0+CP\Big(\sup\limits_{[0,t]}\tilde{E}\Big),\\
|J_{10}|&\leq C\big|\big|\omega^{1+\mu}\partial_t^5 v'\big|\big|_0\big|\big|\partial_tv'
\big|\big|^3_{L^6}||v'||^4_\infty
\leq \tilde{M}_0+CP\Big(\sup\limits_{[0,t]}\tilde{E}\Big),\\
|J_{11}|&\leq C\big|\big|\omega^{1+\mu}\partial_t^5 v'\big|\big|_0\big|\big|\partial_tv'
\big|\big|^2_{L^4}||v'||^3_\infty
\leq \tilde{M}_0+CP\Big(\sup\limits_{[0,t]}\tilde{E}\Big),\\
|J_{12}|&\leq C\big|\big|\omega^{1+\mu}\partial_t^5 v'\big|\big|_0\big|\big|\partial_tv'
\big|\big|^2_{L^8}||\omega^{1+\mu}\partial_t^2v'||_{L^4}
\leq \tilde{M}_0+CP\Big(\sup\limits_{[0,t]}\tilde{E}\Big),\\
|J_{13}|&\leq C\big|\big|\omega^{1+\mu}\partial_t^5 v'\big|\big|_0||v'||_0||v'||^6_\infty
\leq CP\Big(\sup\limits_{[0,t]}\tilde{E}\Big).
\end{split}\end{equation*}
For $J_{14},$ we have
\begin{equation*}\label{E:7.14o}\begin{split}
|J_{14}|&\leq C\big|\big|\omega^{1+\mu}\partial_t^5 v'\big|\big|_0\big|\big|\omega^{1+\mu}\partial_t^2 v'\big|\big|_\infty||v'||_\infty||\partial_t^2v'||_0\\
&\leq C\big|\big|\omega^{1+\mu}\partial_t^5 v'\big|\big|_0\big|\big|\omega^{\mu}\omega'\partial_t^2 v'
+\omega^{1+\mu}\partial_t^2 v''\big|\big|_0||v'||_\infty||\partial_t^2v'||_0\\
&\leq CP\Big(\sup\limits_{[0,t]}\tilde{E}\Big).
\end{split}\end{equation*}
Thus, using the same argument as Proposition \ref{L:5.1}, for $0<\alpha<1$, we obtain \eqref{E:7.15}.
\end{proof}
Similarly, we have
\begin{proposition}\label{L:7.3}
For $\gamma=\frac{3}{2}$, there exists a constant $\alpha\in(0,1)$,  such that one has the following $\varepsilon-$independent energy estimates:
\begin{equation}\label{E:7.15a}\begin{split}
\big|\big|\omega^{1/2+\mu}\partial^{4}_t v\big|\big|_0^2 +\big|\big|\omega^{1+\mu}\partial_t^3 v'\big|\big|_0^2
+&\big|\big|\omega^{1+\mu}\partial_t^3 v\big|\big|_0^2
+\varepsilon \int_0^t\big|\big|\omega^{1+\mu}\partial_t^{4}v'\big|\big|_0^2\\
\leq& \tilde{E}^\alpha\left( \tilde{M}_0+C t P\left(\sup\limits_{[0,t]}\tilde{E}\right)\right),
\end{split}\end{equation}
\begin{equation}\label{E:7.15b}\begin{split}
\big|\big|\omega^{1/2+\mu}\partial^{2}_t v\big|\big|_0^2 +\big|\big|\omega^{1+\mu}\partial_t v'\big|\big|_0^2+
&\big|\big|\omega^{1+\mu}\partial_t v\big|\big|_0^2
+\varepsilon \int_0^t\big|\big|\omega^{1+\mu}\partial_t^{2}v'\big|\big|_0^2\\
\leq &\tilde{E}^\alpha\left(\tilde{M}_0+C t P\left(\sup\limits_{[0,t]}\tilde{E}\right)\right).
\end{split}\end{equation}
\end{proposition}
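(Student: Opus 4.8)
The plan is to repeat the energy scheme of Propositions~\ref{L:7.1} and~\ref{L:7.2}, only lowering the number of time derivatives: from $l+1=6$ down to $4$ for \eqref{E:7.15a}, and down to $2$ for \eqref{E:7.15b}. First I would prove \eqref{E:7.15a}. Differentiating $\eqref{E:3.10}_1$ four times in $t$, multiplying by $\partial_t^4 v$, integrating over $I\times[0,t]$ and carrying out the same spatial integration by parts as in \eqref{E:5.3}, one obtains the exact analogue of \eqref{E:7.7}:
\begin{equation*}\begin{split}
\frac12\int_I\omega^{1+2\mu}|\partial_t^4 v|^2&+\frac{\gamma}{2}\int_I\frac{\omega^{2+2\mu}}{(\eta')^{\gamma+1}}|\partial_t^3 v'|^2e^{S_0}+\varepsilon\int_0^t\int_I|\omega^{1+\mu}\partial_t^4 v'|^2=\sum_{i=1}^4 I_i,
\end{split}\end{equation*}
where the left-hand side produces precisely $\|\omega^{1/2+\mu}\partial_t^4 v\|_0^2$, $\|\omega^{1+\mu}\partial_t^3 v'\|_0^2$ and the dissipation term, since $\omega^{1+2\mu}=(\omega^{1/2+\mu})^2$ and $1/2\le\eta'\le3/2$.

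The four terms $I_i$ are handled exactly as in \eqref{E:7.7}--\eqref{E:7.13}. The initial-data pieces $I_1,I_2$ are bounded by $\tilde M_0$; the term $I_3$ carrying the factor $v'\,|\partial_t^3 v'|^2$ is, by $\|v'\|_\infty\le C\|v\|_2\le C\tilde E^{1/2}$ and \eqref{E:7.8}, bounded by $CtP(\sup_{[0,t]}\tilde E)$; and the commutator $I_4$, in which $\partial_t^i(\eta')^{-(\gamma+1)}$ multiplies $\partial_t^{3-i}v'$, is integrated by parts in time as in \eqref{E:7.9} into a volume part $I_{41}$ and a temporal boundary part $I_{42}$. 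The summands $J_i$ of $I_{41}$ are now products of $v'$ and $\partial_t^k v'$ with $k\le3$, which form a lower-order subcollection of the fourteen products $j_1,\dots,j_{14}$ treated in Proposition~\ref{L:7.2}; each is controlled by the same Gagliardo--Nirenberg and weighted-Sobolev interpolations \eqref{E:3.2a}, \eqref{E:3.3b}, \eqref{E:5.11bc} together with the physical vacuum condition \eqref{E:1.8}, yielding $\tilde M_0+CtP(\sup_{[0,t]}\tilde E)$, while $I_{42}$ is bounded as in \eqref{E:7.13} by $\tilde E^{\alpha}\big(\tilde M_0+CtP(\sup_{[0,t]}\tilde E)\big)$. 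Finally the norm $\|\omega^{1+\mu}\partial_t^3 v\|_0^2$ is recovered from $\partial_t^3 v=u_3+\int_0^t\partial_t^4 v$ exactly as in \eqref{E:5.36}, giving $\|\omega^{1+\mu}\partial_t^3 v\|_0^2\le\tilde M_0+Ct\|\omega\|_\infty\|\omega^{1/2+\mu}\partial_t^4 v\|_0^2$. Collecting these bounds yields \eqref{E:7.15a}.

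For \eqref{E:7.15b} I would run the identical computation with $\partial_t^4$ replaced by $\partial_t^2$ and $\partial_t^3 v'$ replaced by $\partial_t v'$. The commutator $I_4$ then involves only $\partial_t^i(\eta')^{-(\gamma+1)}$ acting on $\partial_t^{1-i}v'$, so the list of $J_i$ is shorter and strictly easier; the same interpolation estimates and the recovery of $\|\omega^{1+\mu}\partial_t v\|_0^2$ via $\partial_t v=u_1+\int_0^t\partial_t^2 v$ close the bound.

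As in Proposition~\ref{L:7.2}, the one term that genuinely requires care is the temporal boundary contribution $I_{42}$ produced by integrating $I_4$ by parts in time: unlike the volume terms it cannot be absorbed by the smallness of $t$ alone, and it is precisely this term that forces the loss factor $\tilde E^{\alpha}$ with $0<\alpha<1$ (cf.\ \eqref{E:5.12a} and \eqref{E:7.13}). Keeping $\alpha$ strictly below $1$ is what will later allow $\tilde E^{\alpha}$ to be absorbed into $\sup_{[0,t]}\tilde E$ when the full energy estimate is closed; apart from this, the estimates \eqref{E:7.15a}--\eqref{E:7.15b} are a routine lower-order transcription of the already completed top-order estimate of Proposition~\ref{L:7.2}.
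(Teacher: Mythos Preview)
Your proposal is correct and follows exactly the approach the paper intends: the paper gives no detailed proof of Proposition~\ref{L:7.3} at all, writing only ``Similarly, we have'' after the top-order estimate of Proposition~\ref{L:7.2}, and your reconstruction---running the energy identity at the lower orders $\partial_t^4$ and $\partial_t^2$, recognizing the commutator products as a strict subset of the $j_1,\dots,j_{14}$ already handled, and recovering the undifferentiated-in-$x$ norms via $\partial_t^k v=u_k+\int_0^t\partial_t^{k+1}v$ as in \eqref{E:5.36}---is precisely what that word ``Similarly'' encodes. Your emphasis on the temporal-boundary piece $I_{42}$ as the sole source of the sub-unit exponent $\alpha$ is also the right reading of the argument.
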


\subsection{Estimates of higher-order spatial derivatives for $\gamma=\frac{3}{2}$}

Based on the energy estimate \eqref{E:7.15}, by elliptic estimates we can derive the estimates of the higher-order spatial derivatives  associated with the weights.

\begin{proposition}\label{L:7.5}
For $\gamma=\frac{3}{2}$, there exists a constant $\alpha\in(0,1)$,
such that one has the following estimates:
\begin{equation}\label{E:7.16}\begin{split}
\sup\limits_{[0,t]}\Big(\big|\big|\omega^{1/2+\mu}\partial_t^4v'\big|\big|_0^2
+\big|\big|\omega^{3/2+\mu}\partial_t^4v''\big|\big|_0^2\Big)
\leq C\big(\tilde{E}^{\alpha}+1\big)\left({\tilde{M}_0}+ CtP\Big(\sup\limits_{[0,t]}{\tilde{E}}\Big)\right),\end{split}\end{equation}
\begin{equation}\label{E:7.16ab}\begin{split}
\sup\limits_{[0,t]}\Big(\big|\big|\omega^{1/2+\mu}\partial_t^2v'\big|\big|_0^2+\big|\big|\omega^{1/2+\mu}\partial_t^2v''\big|\big|_0^2
+&\big|\big|\omega^{1/2+\mu}\partial_t^2v'''\big|\big|_0^2
+\big|\big|\omega^{1/2+\mu}v'\big|\big|_0^2+\big|\big|\omega^{1/2+\mu}v''\big|\big|_0^2\\
+\big|\big|\omega^{1/2+\mu}v'''\big|\big|_0^2+\big|\big|\omega^{3/2+\mu}\partial_x^4v\big|\big|_0^2\Big)\leq& C\big(\tilde{E}^{\alpha}+1\big)\left({\tilde{M}_0}+ CtP\Big(\sup\limits_{[0,t]}{\tilde{E}}\Big)\right).\end{split}\end{equation}
\end{proposition}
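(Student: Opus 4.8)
The plan is to follow the elliptic-estimate strategy of Proposition \ref{L:5.4}, adjusting the order $k$ of the time derivative and substituting the energy bounds proved for $\gamma=\frac32$ in Propositions \ref{L:7.2}--\ref{L:7.3}. The common starting point is the $\partial_t^{k+1}$-problem \eqref{E:5.15a}, which already has the form required by Lemma \ref{L:3.2} after multiplication by an appropriate weight. Throughout I would use the multiplier $\omega^{-(1/2+\mu)}$, so that the left-hand side, once expanded by the product rule, reproduces precisely the weighted norms $\|\omega^{3/2+\mu}\partial_t^k v''\|_0$ and $\|\omega^{1/2+\mu}\partial_t^k v'\|_0$ that occur in $\tilde E$.

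For \eqref{E:7.16} I would set $k=4$, multiply \eqref{E:5.15a} by $\omega^{-(1/2+\mu)}$, and apply Lemma \ref{L:3.2} together with the fundamental theorem of calculus, obtaining a bound on $\sup_{[0,t]}\|\omega^{-(1/2+\mu)}(\omega^{2+2\mu}\partial_t^4 v' e^{S_0})'\|_0$ by the initial data plus weighted source terms, in exact parallel to \eqref{E:5.19}. The leading source term $-\frac1\gamma\omega^{1+2\mu}\partial_t^6 v$ contributes $\|\omega^{1/2+\mu}\partial_t^6 v\|_0$, which is controlled by the energy estimate \eqref{E:7.15}; the commutator terms $\sum_{i=1}^4 c_i\,\omega^{-(1/2+\mu)}(\omega^{2+2\mu}\partial_t^i(\eta')^{-(\gamma+1)}\partial_t^{4-i}v'e^{S_0})'$, the term carrying $1-(\eta')^{-(\gamma+1)}$, and the term carrying $\eta''$ are bounded exactly as $I_6$, $I_7$, $I_8$ in the proof of Proposition \ref{L:5.4}, via the physical vacuum condition \eqref{E:1.8}, the weighted embedding \eqref{E:3.1a}, the interpolation \eqref{E:3.3b}, and the definition of $\tilde E$. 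Expanding the left-hand side then yields $\|\omega^{3/2+\mu}\partial_t^4 v''\|_0^2+\|\omega^{1/2+\mu}\partial_t^4 v'\|_0^2$, with the cross terms in $\omega'$ and $S_0'$ absorbed into quantities already bounded by $\tilde M_0+CtP(\sup\tilde E)$, as in the passage following \eqref{E:5.29}.

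For \eqref{E:7.16ab} I would run the same argument with $k=2$ (to produce the $\partial_t^2$ spatial derivatives) and with $k=0$, i.e.\ the static equation (to produce the purely spatial derivatives), again with multiplier $\omega^{-(1/2+\mu)}$; here the leading source terms $\|\omega^{1/2+\mu}\partial_t^4 v\|_0$ and $\|\omega^{1/2+\mu}\partial_t^2 v\|_0$ are controlled by \eqref{E:7.15a} and \eqref{E:7.15b}. To reach the fourth spatial derivative $\|\omega^{3/2+\mu}\partial_x^4 v\|_0^2$ I would differentiate the static equation once more in $x$ before applying the multiplier and Lemma \ref{L:3.2}, which raises each spatial order by one, exactly as indicated for Proposition \ref{L:5.5}; the additional $\eta''$ and $S_0''$ factors are harmless since they are bounded.

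The main obstacle I anticipate is the proliferation of nonlinear source terms at $k=4$: since $l=5$ for $\gamma=\frac32$, the commutator $\partial_t^i(\eta')^{-(\gamma+1)}$ with $i$ up to $4$, once differentiated in space, expands into a long list of products of $v'$ and $\partial_t^j v'$ (the analogue of the fourteen terms $J_1$--$J_{14}$ in Proposition \ref{L:7.2}). For each product I must distribute the weight $\omega^{3/2+\mu}$ so that every factor matches a norm in $\tilde E$---assigning the highest-order time factor the full $\omega^{1+\mu}$ or $\omega^{3/2+\mu}$ weight and placing the low-order factors in $L^\infty$ or $L^4$ through \eqref{E:3.2a}--\eqref{E:3.3b}---while checking that the leftover nonnegative powers of the bounded function $\omega$ cost nothing. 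In contrast with the general $1<\gamma<2$ discussion around \eqref{E:5.25dda}, the negative-power difficulty does not appear here, because $\mu=\frac12$ makes $\omega^{1/2-\mu}=1$; so the essential work is the careful weight-bookkeeping for these many terms rather than any new integrability issue.
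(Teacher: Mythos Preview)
Your approach is correct and matches the paper's strategy: multiply the $\partial_t^{k+1}$-problem \eqref{E:5.15a} by $\omega^{-(1/2+\mu)}$, apply Lemma \ref{L:3.2}, control the leading source $\|\omega^{1/2+\mu}\partial_t^6 v\|_0$ via \eqref{E:7.15}, and then expand the left side to recover the two weighted norms. For \eqref{E:7.16ab} the paper proceeds exactly as you describe, replacing the leading term successively by $\omega^{1/2+\mu}\partial_t^4 v$, $\omega^{1/2+\mu}\partial_t^4 v'$, $\omega^{1/2+\mu}\partial_t^2 v$, $\omega^{1/2+\mu}\partial_t^2 v'$, $\omega^{1/2+\mu}\partial_t^2 v''$ and iterating.

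The one place where the paper's execution differs from your sketch is the commutator term (the paper's $I_2$). Rather than expanding $\partial_t^i(\eta')^{-(\gamma+1)}$ directly as in the $I_{71}$--$I_{78}$ of Proposition \ref{L:5.4}, the paper first invokes the identity \eqref{E:5.22ab} to write $\partial_t^\alpha(\eta')^{-(\gamma+1)}$ in integral form $\int_0^t\partial_t^\alpha\bigl(v'/(\eta')^{\gamma+2}\bigr)$, and then enumerates $74$ terms $K_1,\dots,K_{74}$ arising from the spatial derivative of the full product. This is only an organizational choice---both routes ultimately rely on the fundamental theorem of calculus to manufacture the factor of $t$---but the paper's version makes the small-time structure of each piece explicit from the outset, at the cost of the longer list.

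Your remark that the negative-power difficulty disappears because $\omega^{1/2-\mu}=1$ is correct for that particular factor, but a fractional interpolation of the same flavor as \eqref{E:5.25dda} is still needed (the paper uses it for $K_{10}$, $K_{19}$, $K_{52}$): one must bound $\|\omega^{1/2+\mu}\partial_t^2 v'\|_\infty$, and after interpolating to $\|(\omega^{1/2+\mu}\partial_t^2 v')'\|_{L^r}$ a H\"older splitting with $\|\omega^{-1/2}\|_{L^\beta}$ (finite for $\beta<2$) is used to land back on norms in $\tilde E$. So some integrability bookkeeping with a negative power of $\omega$ remains, just with a milder exponent than in the $2\le\gamma<3$ case.
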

\begin{proof}Choosing $k=4$ in \eqref{E:5.15a}, using the first term of \eqref{E:7.15}, multiplying \eqref{E:5.15a} by
$\omega^{-{1/2+\mu}}$,
then, using Lemma \ref{L:3.2} and fundamental theorem of calculus, we obtain that, for any $t\in[0,T^\varepsilon]$,
\begin{equation}\label{E:7.17}\begin{split}
\sup\limits_{[0,t]}\Bigg|\Bigg|\omega^{-(1/2+\mu)}&\big(\omega^{2+2\mu}\partial_t^4v'e^{S_0}\big)'\Bigg|\Bigg|_{0}\leq
C\sup\limits_{[0,t]}\big|\big|\omega^{1/2+\mu}\partial^6_t v\big|\big|_{0}\\
&+C\sup\limits_{[0,t]}\sum_{\alpha=1}^4\Bigg|\Bigg|\omega^{-(1/2+\mu)}\left(\omega^{2+2\mu}\partial_t^\alpha
\frac{1}{(\eta')^{\gamma+1}}\partial^{4-\alpha}_tv'e^{S_0}\right)'
\Bigg|\Bigg|_{0}\\
&+C\sup\limits_{[0,t]}\Bigg|\Bigg|\omega^{-(1/2+\mu)}
\left[\left(1-\frac{1}{(\eta')^{\gamma+1}}\right)\big(\omega^{2+2\mu}
\partial_t^{4}v'e^{S_0}\big)'\right]\Bigg|\Bigg|_{0}\\
&+C\sup\limits_{[0,t]}\Bigg|\Bigg|\frac{\omega^{3/2+\mu}\partial_t^4v'e^S_0\eta''}{\eta'^{(\gamma+2)}}\Bigg|\Bigg|_{0}
=:\sum\limits_{i=1}^4 I_i.\\
\end{split}\end{equation}

Now we estimate each term on the right-hand side of \eqref{E:7.17}. First, we   use
the estimate \eqref{E:7.15} to obtain,  for each $t\in[0,T_k]$,
\begin{equation}\label{E:7.18a}
I_1=\sup\limits_{[0,t]}\big|\big|\omega^{1/2+\mu}\partial^6_t v\big|\big|_{0}\leq \tilde{E}^{{\alpha/2}}\Big({\tilde{M}}_0
+CtP\Big(\sup\limits_{[0,t]}{\tilde{E}}\Big)\Big).
\end{equation}
The remaining terms will be estimated by using the definition of the energy function $\tilde{{E}}.$
For the second term, from \eqref{E:5.22ab}, after detailed computations, we have
\begin{equation}\label{E:7.19}
\begin{split}
I_2=&\sum_{\alpha=1}^4\Bigg|\Bigg|\omega^{-(1/2+\mu)}\left(\omega^{2+2\mu}\partial_t^\alpha
\frac{1}{(\eta')^{\gamma+1}}\partial^{4-\alpha}_tv'e^S_0\right)'
\Bigg|\Bigg|_{0}\\
=&\sum_{\alpha=1}^4\Bigg|\Bigg|-\frac{1}{\gamma+1}\omega^{-(1/2+\mu)}\left(\omega^{2+2\mu}\partial^{4-\alpha}_tv'e^S_0
\int_0^t\partial_t^\alpha\Big(\frac{v'}{(\eta')^{\gamma+2}}\Big)\right)'
\Bigg|\Bigg|_{0} \leq \sum_{i=1}^{74} K_{i},
\end{split}\end{equation}
where we have used the fact $1/2\leq\eta'\leq 3/2$ and $\underline{S}\leq S'_0\leq \overline{S}.$

For $i=1,2,\cdots, 17$, $K_i=C\big|\big|(\omega^{1/2+\mu}+\omega^{3/2+\mu})k_i\big|\big|_0$ with
\begin{equation} \label{kk117}
\begin{split}
k_1=&\partial_t^3v'\int_0^t (v')^2,\  k_2=\partial_t^3v'\int_0^t \partial_t v',\
k_3=\partial_t^2v'\int_0^t(v')^3,\  k_4=\partial_t^2v'\int_0^t v'\partial_t v',\\
k_5=&\partial_t^2v'\int_0^t \partial_t^2 v', \ k_6=\partial_tv'\int_0^t (v')^4, \
k_7=\partial_tv'\int_0^t (v')^2\partial_t v', \ k_8=\partial_tv'\int_0^t (\partial_t v')^2,\\
k_9=&\partial_tv'\int_0^t v'\partial^2_t v', \ k_{10}=\partial_tv'\int_0^t \partial^3_t v',\
k_{11}=v'\int_0^t (v')^5,\ k_{12}=v'\int_0^t (v')^3\partial_t v',\\
k_{13}=&v'\int_0^t (v')^2\partial^2_t v',\ k_{14}=v'\int_0^t v'(\partial_t v')^2,\
k_{15}=v'\int_0^t \partial_tv'\partial^2_tv',\ k_{16}=v'\int_0^t v'\partial^3_t v',\\
k_{17}=&v'\int_0^t \partial_t^4 v'.
\end{split}\end{equation}
For $i=18,19,\cdots, 58$,  $K_i=C\big|\big|\omega^{3/2+\mu}k_i\big|\big|_0$ with 
\begin{equation*}\begin{split}
k_{18}=&\partial_t^3v'\int_0^t  v'v'',\  k_{19}=\partial_t^3v'\int_0^t \partial_t v'',\
k_{20}=\partial_t^2v'\int_0^t  (v')^2v'',\  k_{21}=\partial_t^2v'\int_0^t v''\partial_t v',\\
k_{22}=&\partial_t^2 v'\int_0^t v'\partial_t v'',\ k_{23}=\partial^2_tv'\int_0^t \partial_t^2v'', \
k_{24}=\partial_tv'\int_0^t (v')^3 v'',\ k_{25}=\partial_tv'\int_0^t v'\partial_t v', \\
k_{26}=&\partial_tv'\int_0^t (v')^2\partial_t v'', \ k_{27}=\partial_tv'\int_0^t \partial_tv'\partial_t v'',\
k_{28}=\partial_t v'\int_0^t \partial_t^2v'v'',\ k_{29}=\partial_tv'\int_0^t v'\partial_t^2v'',\\
 k_{30}=&\partial_t v'\int_0^t \partial^3_t v'',\
k_{31}=v'\int_0^t (v')^4v'',\ k_{32}=v'\int_0^t (v')^2v''\partial_t v',\ k_{33}=v'\int_0^t (v')^3\partial_t v'',\\
 k_{34}=&v'\int_0^t v''\partial_t v',\ k_{35}=v'\int_0^t \partial_t v'\partial_t v'',\ k_{36}=v'\int_0^t \partial^2_t v'',
 \ k_{37}=v'\int_0^t v'v''\partial^2_t v',\\
 k_{38}=&v'\int_0^t \partial_t^2v'\partial_t v'',\ k_{39}=v'\int_0^t \partial_t v'\partial^2_t v'',\ k_{40}=v'\int_0^t v'\partial^3_tv'',
k_{41}=v'\int_0^t \partial_t^3v'v'', \\
 k_{42}=&v'\int_0^t \partial_t^4 v'',\ k_{43}=\partial_t^3v''\int_0^t (v')^2,\ k_{44}=\partial^3_t v''\int_0^t \partial_tv',\
 k_{45}=\partial_t^2v''\int_0^t (v')^3, \\
\end{split}\end{equation*}
\begin{equation*}\begin{split}
k_{46}=&\partial^2_tv''\int_0^tv'' \partial_t v',\ k_{47}=\partial_t^2v''\int_0^t \partial^2_t v',\ k_{48}=\partial_tv''\int_0^t (v')^4,
\ k_{49}=\partial_tv''\int_0^t(v')^2 \partial_tv',\\
 k_{50}=&\partial_tv''\int_0^t (\partial_t v')^2,\ k_{51}=\partial_tv''\int_0^t v'\partial^2_t v',\ k_{52}=\partial_tv''\int_0^t \partial^3_tv',
 \ k_{53}=v''\int_0^t (v')^2(\partial_t v')^2,\\
 k_{54}=&v''\int_0^t(v')^2\partial_t^2 v',\ k_{55}=v''\int_0^t \partial_t v'\partial_t^2v',\ k_{56}=v''\int_0^tv'\partial_t^3v',\
k_{57}=v''\int_0^t\partial_t^4v'.\\
\end{split}\end{equation*}
For $i=58,59,\cdots, 74$,  $K_i=C\big|\big|\omega^{3/2+\mu}k_i\big|\big|_0$, where each $k_i$ has the form  similar to
the term  $k_{i-17}$ in \eqref{kk117} but every integrand function has an additional term $\eta''$, for example,
\begin{equation*}\begin{split}
k_{58}=&\partial_t^3v'\int_0^t (v')^2\eta'',\;  k_{59}=\partial_t^3v'\int_0^t \partial_t v''\eta'',\;
\cdots,\;
k_{74}=v'\int_0^t\partial_t^4 v'\eta''.\\
\end{split}\end{equation*}
Before deriving the estimates of  $K_i,i=1,2,\cdots,74$, similarly to \eqref{E:5.23abc}, one has
\begin{equation}\label{E:5.23abcde}\begin{split}
\sum_{j=1}^3\Big\{\big|\big|\omega^{3/2+\mu}\partial_t^{5-2j}\partial_x^{j+1}v\big|\big|_0^2
&+\sum_{i=1}^j\big|\big|\omega^{1/2+\mu}\partial_t^{5-2j}\partial_x^i v\big|\big|_0^2
+\sum_{i=-1}^j\big|\big|\omega^{1+\mu}\partial_t^{4-2j}\partial_x^{i+1} v\big|\big|_0^2\Big\}\\
\leq & \tilde{{M}}_0+CtP\Big(\sup\limits_{[0,t]}\tilde{{E}}\Big).\\
\end{split}\end{equation}
For ${K_{1},K_2, K_4, K_5}$, using the estimates \eqref{E:5.23abcde}, we have
\begin{equation}\label{E:7.20a}\begin{split}
K_{1}=& C\Big|\Big|(\omega^{1/2+\mu}+\omega^{3/2+\mu})\partial_t^3v'\int_0^t (v')^2 \Big|\Big|_0
\leq C\Big|\Big|\omega^{1/2+\mu}\partial_t^3 v'
\Big|\Big|_{0}\int_0^t||v'||^2_{\infty}\\
\leq& {\tilde{M}_0}+ CtP\Big(\sup\limits_{[0,t]}{\tilde{E}}\Big),\\
K_{2}=& C\Big|\Big|(\omega^{1/2+\mu}+\omega^{3/2+\mu})\partial_t^3v'\int_0^t \partial_t v' \Big|\Big|_0
\leq C\Big|\Big|\omega^{1/2+\mu}\partial_t^3v'\Big|\Big|_0\big(||v'||_{\infty}+||u'_0||_{\infty}\big)\\
\leq&C\tilde{E}^{1/2}\big({\tilde{M}_0}+ \tilde{E}^{1/2}\big),\\
K_4=&C\Big|\Big|(\omega^{1/2+\mu}+\omega^{3/2+\mu})\partial_t^2v'\int_0^t v'\partial_tv' \Big|\Big|_0
\leq C\Big|\Big|\omega^{1/2+\mu}\partial_t^2 v'
\Big|\Big|_{L^4}||\partial_t v'||_{L^4}\int_0^t||v'||_\infty\\
\leq& {\tilde{M}_0}+ CtP\Big(\sup\limits_{[0,t]}{\tilde{E}}\Big),\\
K_5=&C\Big|\Big|(\omega^{1/2+\mu}+\omega^{3/2+\mu})\partial_t^2v'\int_0^t \partial^2_tv' \Big|\Big|_0
\leq C\Big|\Big|\omega^{1/2+\mu}\partial_t^2 v'
\Big|\Big|_{\infty}\int_0^t||\partial_t^2v'||_0\\
\leq& CtP\Big(\sup\limits_{[0,t]}{\tilde{E}}\Big),\\
\end{split}\end{equation}
For $K_7, K_8, K_9,$ we have
\begin{equation*}\label{E:7.22}\begin{split}
K_7=&C\Big|\Big|(\omega^{1/2+\mu}+\omega^{3/2+\mu})\partial_tv'\int_0^t (v')^2\partial_tv' \Big|\Big|_0
\leq C\int_0^t\Big|\Big|\omega^{1/2+\mu}\partial_t v'
\Big|\Big|_{L^8}||\partial_tv'||_{L^8}||v'||^2_{L^4}\\
\leq& CtP\Big(\sup\limits_{[0,t]}{\tilde{E}}\Big),\\
K_8=&C\Big|\Big|(\omega^{1/2+\mu}+\omega^{3/2+\mu})\partial_tv'\int_0^t (\partial_tv')^2 \Big|\Big|_0
\leq C\int_0^t\Big|\Big|\omega^{1/2+\mu}\partial_t v'
\Big|\Big|_{L^4}||\partial_tv'||^2_{L^8}\\
\leq& CtP\Big(\sup\limits_{[0,t]}{\tilde{E}}\Big),\\
K_9=&\Big|\Big|(\omega^{1/2+\mu}+\omega^{3/2+\mu})\partial_tv'\int_0^t \partial_t^2 v'v' \Big|\Big|_0
\leq C\int_0^t\big|\big|\omega^{1/2+\mu}\partial_t v'
\big|\big|_{\infty}||\partial^2_tv'||_{0}||v'||_{\infty}\\
\leq& CtP\Big(\sup\limits_{[0,t]}{\tilde{E}}\Big).\\
\end{split}\end{equation*}
{ For $K_{10}$, using the same method as dealing with $I_{78}$ in \eqref{E:5.24abcd} and \eqref{E:5.25dda}, we have
\begin{equation*}\label{E:7.23}\begin{split}
K_{10}=&\Big|\Big|(\omega^{1/2+\mu}+\omega^{3/2+\mu})\partial_tv'\int_0^t \partial_t^3 v' \Big|\Big|_0
\leq C\Big|\Big|\omega
^{1/2+\mu}\partial_t^2 v'
\Big|\Big|_{\infty}||\partial_t v'||_{0}\\
\leq& C\big|\big|\omega
^{1/2+\mu}\partial_t^2 v'\big|\big|_{3/4}||\partial_t v'||_{0}
\leq C\big|\big|\omega
^{1/2+\mu}\partial_t^2 v'\big|\big|^{1-\alpha}_{0}\big|\big|(\omega
^{1/2+\mu}\partial_t^2 v')'\big|\big|^{\alpha}_{L^r}||\partial_t v'||_{0}\\
\leq&  C\big|\big|\omega
^{1/2+\mu}\partial_t^2 v'\big|\big|^{1-\alpha}_{0}\big(\big|\big|\omega
^{\mu-1/2}\partial_t^2 v'\big|\big|^{\alpha}_{L^r}+\big|\big|\omega
^{1/2+\mu}\partial_t^2 v''\big|\big|^{\alpha}_{L^r}\big)||\partial_t v'||_{0}\\
\leq&  C\big|\big|\omega
^{1/2+\mu}\partial_t^2 v'\big|\big|^{1-\alpha}_{0}\big(||\omega^\mu||_\infty\Big|\Big|
\frac{\partial_t^2 v'}{\sqrt{\omega}}\Big|\Big|^{\alpha}_{L^r}+\big|\big|\omega
^{1/2+\mu}\partial_t^2 v''\big|\big|^{\alpha}_{L^r}\big)||\partial_t v'||_{0}\\
\leq&  C\big|\big|\omega
^{1/2+\mu}\partial_t^2 v'\big|\big|^{1-\alpha}_{0}\big(\Big|\Big|
\frac{1}{\sqrt{\omega}}\Big|\Big|^{\alpha}_{L^{\beta}}||\partial_t^2 v'||_0^\alpha+\big|\big|\omega
^{1/2+\mu}\partial_t^2 v''\big|\big|^{\alpha}_{L^{r}}\big)||\partial_t v'||_{0}\\
\leq &C \tilde{E}^{\alpha/2}.\\
\end{split}\end{equation*}
To ensure $\frac{1}{2}\beta<1$, here we choose
$\frac{1}{\beta}\in\big(\frac{1}{2},\frac{1}{r}\big)$, and $\alpha_0<\alpha<1$ with $0\leq \alpha_0=\frac{3}{4(1+\mu)}<\frac{3}{4}$
as $1<\gamma<2$ and $\frac{1}{r}=\frac{3}{2}-\frac{3}{4\alpha}$. }Similarly,
\begin{equation*}\label{E:7.23a}\begin{split}
K_{19}=&\Big|\Big|\omega^{3/2+\mu}\partial^3_tv'\int_0^t \partial_t v'' \Big|\Big|_0
\leq C\big(\Big|\Big|\omega
^{1/2+\mu}v''
\Big|\Big|_{\infty}+\Big|\Big|\omega
^{1/2+\mu}u''_0
\Big|\Big|_{\infty}\big)||\omega^{1/2+\mu}\partial^3_t v'||_{0}\\
\leq& C\tilde{E}^{\alpha/2},\\
K_{52}=&\Big|\Big|\omega^{3/2+\mu}\partial_tv''\int_0^t \partial^3_t v' \Big|\Big|_0
\leq C\Big|\Big|\omega
^{1/2+\mu}\int_0^t\partial^3_tv'
\Big|\Big|_{\infty}||\omega^{1/2+\mu}\partial_t v''||_{0}
\leq C\tilde{E}^{{\alpha/2}}.\\
\end{split}\end{equation*}
For $K_{18}$, we have
\begin{equation*}\label{E:7.24}\begin{split}
K_{18}=\Big|\Big|\omega^{3/2+\mu}\partial_t^3v'\int_0^t  v'v'' \Big|\Big|_0
\leq C\int_0^t\big|\big|\omega
^{3/2+\mu}
{\partial^3_t v''}
\big|\big|_{\infty}|| v''||_{0}||v'||_\infty
\leq {\tilde{M}_0}+C tP\Big(\sup\limits_{[0,t]}{\tilde{E}}\Big).\\
\end{split}\end{equation*}
For $K_{27}$, we have
\begin{equation*}\label{E:7.25}\begin{split}
K_{27}=&\Big|\Big|\omega^{3/2+\mu}\partial_tv'\int_0^t \partial_t v'\partial_tv'' \Big|\Big|_0
\leq C\int_0^t\big|\big|\omega
^{3/2+\mu}
{\partial_t v''}
\big|\big|_{\infty}||\partial_t v'||^2_{L^4}
\leq {\tilde{M}_0}+C tP\Big(\sup\limits_{[0,t]}{\tilde{E}}\Big).
\end{split}\end{equation*}
By detailed analysis, we find that the estimates of $K_i, i=3,6,11,12,13,16,17,31,33,36,40,42,43,\\44,45,48,51,57$ are the same   as $K_1$;
the estimates of $K_i, i=14,15,29,50$
are the same as $K_4$; the estimates of $K_i, i=23,39$
are the same as $K_5$; the estimates of $K_i, i=53,54,55,56$ are the same as
$K_{9}$; the estimates of $K_i, i=20,21,22,24,28,32,34,37,38,41,46,47,49$
are the same as $K_{18}$, thus we omit them.
Due to $\eta''=\int_0^tv''$, similarly to $K_{18}$, we also have
\begin{equation}\label{E:7.30a}\begin{split}
K_{58}=&C\Big|\Big|\omega^{3/2+\mu}\partial_t^3 v'\int_0^t  (v')^2\eta'' \Big|\Big|_0
\leq \int_0^t||v'||^2_\infty\big|\big|\omega^{3/2+\mu}\partial_t^3 v'\big|\big|_\infty
\Big|\Big|\int_0^t v''\Big|\Big|_0\\
\leq&\tilde{M}_0+ C tP\Big(\sup\limits_{[0,t]}{\tilde{E}}\Big),\\
\end{split}\end{equation}
and the estimates of $K_i, i=59,\cdots,74,$ can also be obtained in the same way. Substituting  all the estimates of $K_i$
into \eqref{E:7.19}, one has
\begin{equation}\label{E:7.30ab}\begin{split}
I_2
\leq&\tilde{E}^{{\alpha/2}}\Big(\tilde{M}_0+ C tP\Big(\sup\limits_{[0,t]}{\tilde{E}}\Big)\Big)
+CtP\Big(\sup\limits_{[0,t]}{\tilde{E}}\Big).\\
\end{split}\end{equation}
From \eqref{E:5.22ab},
the third term $I_3$ on the right-hand side of \eqref{E:7.17} can be controlled by
\begin{equation*}\label{E:7.31}\begin{split}
I_3=&\Bigg|\Bigg|\omega
\left[\left(1-\frac{1}{(\eta')^{\gamma+1}}\right)\big(\omega^{2+2\mu}
\partial_t^{4}v'e^S_0\big)'\right]\Bigg|\Bigg|_{0}
\leq CtP\Big(\sup\limits_{[0,t]}{\tilde{E}}\Big).\\
\end{split}\end{equation*}
The last term of \eqref{E:7.17} can be estimated as
\begin{equation}\label{E:7.36}\begin{split}
I_4
\leq C\big|\big|\omega^{2+\mu}\partial_t^4v'\big|\big|_\infty\Big|\Big|\int_0^tv''\Big|\Big|_0
\leq CtP\Big(\sup\limits_{[0,t]}{\tilde{E}}\Big).\\
\end{split}\end{equation}
Substituting \eqref{E:7.18a}, \eqref{E:7.30ab} and \eqref{E:7.36} into \eqref{E:7.17}, we have
\begin{equation*}\label{E:7.37}\begin{split}
\Big|\Big|\omega^{-(1/2+\mu)}&\big(\omega^{2+2\mu}\partial_t^4 v'e^S_0\big)'\Big|\Big|_0^{{2}}
\leq C\big(\tilde{E}^{\alpha}+1\big)\left({\tilde{M}_0}+ tP\Big(\sup\limits_{[0,t]}{\tilde{E}}\Big)\right)
+C tP\Big(\sup\limits_{[0,t]}{\tilde{E}}\Big).\\
\end{split}\end{equation*}
Similarly to   \eqref{E:5.29}-\eqref{E:5.32}, we have
\begin{equation*}\label{E:7.38}\begin{split}
\sup\limits_{[0,t]}\Big(\big|\big|\omega^{1/2+\mu}\partial_t^4 v'\big|\big|^{{2}}_0&+\big|\big|\omega^{3/2+\mu}\partial_t^4 v''\big|\big|^{2}_0\Big)\\
\leq& C\big(1+\tilde{E}^{\alpha}\big)\left({\tilde{M}_0}+ tP\Big(\sup\limits_{[0,t]}{\tilde{E}}\Big)\right)
+CtP\Big(\sup\limits_{[0,t]}{\tilde{E}}\Big).\\
\end{split}\end{equation*}
Choosing the multipliers $\omega^{-(1/2+\mu)}$, and replacing the first term $\omega^{1/2+\mu}\partial_t^{6} v$ on the right hand-side of \eqref{E:7.17}
by $\omega^{1/2+\mu}\partial_t^{4} v,$ $\omega^{1/2+\mu}\partial_t^{4} v',$ $\omega^{1/2+\mu}\partial_t^{2} v,$
$\omega^{1/2+\mu}\partial_t^{2} v',$ $\omega^{1/2+\mu}\partial_t^{2} v'',$ respectively, we can obtain the
estimates \eqref{E:7.16ab}.
\end{proof}

Furthermore,  Choosing the multiplier $\omega^{-\mu}$, and replacing $\omega^{1/2+\mu}\partial_t^{k+2} v$
in \eqref{E:7.17}
by $\omega^{1+\mu}\partial_t^{5} v,$ $\omega^{1+\mu}\partial_t^{5} v',$ $\omega^{1+\mu}\partial_t^{3} v',$
$\omega^{1+\mu}\partial_t^{3} v'',$  respectively, we have the following proposition:
\begin{proposition}\label{L:7.6}
For $\gamma=\frac{3}{2}$, there exists a constant $\alpha\in(0,1)$, such that one has the following estimate:
\begin{equation*}\label{E:7.16aab}\begin{split}
\sup\limits_{[0,t]}\Big(\big|\big|\omega^{1+\mu}\partial_t^3v'\big|\big|_0^2+\big|\big|\omega^{1+\mu}\partial_t^3v''\big|\big|_0^2
+&\big|\big|\omega^{2+\mu}\partial_t^3v'''\big|\big|_0^2+\big|\big|\omega^{1+\mu}\partial_tv''\big|\big|_0^2
+\big|\big|\omega^{1+\mu}\partial_tv'''\big|\big|_0^2\\
+\big|\big|\omega^{2+\mu}\partial_t\partial_x^4v\big|\big|_0^2\Big)
\leq& C\big(\tilde{E}^{\alpha}+1\big)\left({\tilde{M}_0}+ CtP\Big(\sup\limits_{[0,t]}{\tilde{E}}\Big)\right).
\end{split}\end{equation*}
\end{proposition}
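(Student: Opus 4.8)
The plan is to mirror the proof of Proposition \ref{L:7.5}, replacing the multiplier $\omega^{-(1/2+\mu)}$ by $\omega^{-\mu}$, and to run the resulting elliptic argument four times, once for each of the source substitutions listed above. For a pair $(k,m)$ with $k\in\{1,3\}$ and $m\in\{0,1,2\}$ I would start from the $\partial_t^{k+1}$-problem \eqref{E:5.15a}, differentiate it $m$ times in space, and then multiply through by $\omega^{-\mu}$. This brings the equation into the form $f-\frac{\varepsilon}{\gamma}f_t=g$ required by Lemma \ref{L:3.2}, with $f=\omega^{-\mu}\big(\omega^{2+2\mu}\partial_t^k\partial_x^{m+1}v\,e^{S_0}\big)'$, so that $\sup_{[0,t]}\|f\|_0\le C\max\{\|f(0)\|_0,\sup_{[0,t]}\|g\|_0\}$. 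The four choices $(k,m)=(3,0),(3,1),(1,1),(1,2)$ are precisely those whose leading source term is, respectively, $\omega^{1+\mu}\partial_t^5v$, $\omega^{1+\mu}\partial_t^5v'$, $\omega^{1+\mu}\partial_t^3v'$, and $\omega^{1+\mu}\partial_t^3v''$, matching the substitutions announced before the statement.

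For each choice the first term of $g$, after multiplication by $\omega^{-\mu}$, is the corresponding $\omega^{1+\mu}\partial_t^{k+2}\partial_x^m v$, which is controlled directly by the energy estimates \eqref{E:7.15} (Proposition \ref{L:7.2}) and \eqref{E:7.15a}--\eqref{E:7.15b} (Proposition \ref{L:7.3}); this plays the role of $I_1$ in \eqref{E:7.17} and produces the factor $\tilde E^{\alpha/2}$. The remaining contributions to $g$ --- the commutator terms coming from the time derivatives of $(\eta')^{-(\gamma+1)}$, the factor $1-(\eta')^{-(\gamma+1)}$ expanded through \eqref{E:5.22ab}, and the term carrying $\eta''$ --- are estimated exactly as $I_2,I_3,I_4$ and the family $K_1,\dots,K_{74}$ in \eqref{E:7.19}--\eqref{E:7.36}, using the weighted interpolation bounds collected in \eqref{E:5.23abcde}. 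The only genuinely new bookkeeping is that differentiating \eqref{E:5.15a} twice in space (the case $m=2$) enlarges this family of commutators, but each new term carries the same weight budget and is handled by the same Gagliardo--Nirenberg and Hardy-type inequalities \eqref{E:3.2a}, \eqref{E:3.3b}, \eqref{E:3.1a}.

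Having bounded $\sup_{[0,t]}\|f\|_0^2$, I would then expand the square of $f$ by the product rule, exactly as in the computation preceding \eqref{E:5.32}:
\begin{equation*}
\omega^{-\mu}\big(\omega^{2+2\mu}\partial_t^k\partial_x^{m+1}v\,e^{S_0}\big)'
=(2+2\mu)\,\omega^{1+\mu}\omega'\,\partial_t^k\partial_x^{m+1}v\,e^{S_0}
+\omega^{2+\mu}\partial_t^k\partial_x^{m+2}v\,e^{S_0}
+\omega^{2+\mu}\partial_t^k\partial_x^{m+1}v\,e^{S_0}S_0'.
\end{equation*}
The physical vacuum condition \eqref{E:1.8} ($|\omega'|\ge C>0$ near $\partial I$) turns the first squared term into a lower bound for $\|\omega^{1+\mu}\partial_t^k\partial_x^{m+1}v\|_0^2$, the second gives $\|\omega^{2+\mu}\partial_t^k\partial_x^{m+2}v\|_0^2$, and the $S_0'$-term together with all cross terms is absorbed into lower-order quantities using \eqref{E:1.3a} and Cauchy--Schwarz, precisely as in the passage from \eqref{E:5.29} to \eqref{E:5.32}. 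Summing the four instances then yields all six weighted norms in the statement, the two weaker $\omega^{2+\mu}$-weighted second-derivative norms produced along the way being dominated by their $\omega^{1+\mu}$-weighted counterparts since $\omega$ is bounded on $I$.

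The main obstacle, as for Proposition \ref{L:7.5}, is the non-isentropic correction: the factor $e^{S_0}$ and its derivative $S_0'$ propagate into both the source $g$ and the expansion of $\|f\|_0^2$, and must be controlled uniformly via \eqref{E:1.3a}. The sharpest point is the degenerate inverse-weight estimate of the type \eqref{E:5.25dda}: with $\gamma=\frac{3}{2}$ one has $\mu=\frac{1}{2}$, so the relevant Lebesgue exponent $\beta$ must satisfy $\frac{1}{2}\beta<1$ for $\|\omega^{-1/2}\|_{L^\beta}$ to be finite, and $\beta$ has to be chosen compatibly with the interpolation exponent $\alpha\in(\alpha_0,1)$, $\alpha_0=\frac{3}{4(1+\mu)}=\frac{1}{2}$, and $\frac{1}{r}=\frac{3}{2}-\frac{3}{4\alpha}$. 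This is the same delicate balancing already carried out for $K_{10}$ in \eqref{E:7.23}, and it is what forces $\alpha$ strictly below $1$ in the final bound.
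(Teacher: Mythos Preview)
Your proposal is correct and follows essentially the same approach as the paper. The paper's own justification of Proposition~\ref{L:7.6} is the single sentence preceding it---choose the multiplier $\omega^{-\mu}$ and replace the leading source in \eqref{E:7.17} by $\omega^{1+\mu}\partial_t^{5}v$, $\omega^{1+\mu}\partial_t^{5}v'$, $\omega^{1+\mu}\partial_t^{3}v'$, $\omega^{1+\mu}\partial_t^{3}v''$---and your four cases $(k,m)=(3,0),(3,1),(1,1),(1,2)$ reproduce exactly this list, with the expansion of $f$ and the use of \eqref{E:1.8}, \eqref{E:1.3a} matching the passage from \eqref{E:5.29} to \eqref{E:5.32}. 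One small point of bookkeeping: when you differentiate \eqref{E:5.15a} in $x$ before multiplying by $\omega^{-\mu}$, the derivative also hits the factor $\omega^{1+2\mu}$ in the source, producing an extra term of the type $\omega^{\mu}\omega'\partial_t^{k+2}v$; for $\gamma=\tfrac32$ this is $\omega^{1/2}\omega'\partial_t^{k+2}v$, which is harmless since it is dominated by quantities already controlled at the previous step (or directly by $\tilde E$ via \eqref{E:5.1d}), but it should be accounted for alongside the ``leading'' source you list.
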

\section{The proof of Theorem \ref{T:1.1}}

We are now ready to finish the proof of Theorem \ref{T:1.1} as follows.
From \eqref{E:2.10}, the Propositions \ref{L:5.1}-\ref{L:5.5} for $2\leq \gamma<3$ and Propositions \ref{L:7.2}-\ref{L:7.5} for $\gamma=\frac{3}{2}$,
one has
$$
\sup\limits_{[0,T]}{E}(t)\leq {M}_0+CtP\Big(\sup\limits_{[0,t]}{E}\Big)+C\sup\limits_{[0,t]}{E}^\alpha\left({M}_0
+tP\Big(\sup\limits_{[0,t]}{E}\Big)\right),
$$
where $M_0=P(E(0))$,  $P(\cdot)$ is some polynomial function,  and $0<\alpha<1.$
By Young's inequality and adjusting the constants, one gets
$$
\sup\limits_{[0,t]}{E}(t)\leq {M}_0+CtP\Big(\sup\limits_{[0,t]}{E}\Big),
$$
which yields as in \cite{Coutand2}   a time of existence $T_1$ independent of $\varepsilon$,
as well as  an energy estimate on the time interval $(0,T_1)$ independent of $\varepsilon$ of the form:
\begin{equation}\label{E:8.1}\begin{split}
\sup\limits_{[0,t]}{E}(t)\leq 2 {M}_0.
\end{split}\end{equation}
{By the $\epsilon-$independent estimate \eqref{E:8.1}, there exists a subsequence of $v^\epsilon$
converging to $v$ in $L^2(0,T;H^2(I))$ with $\eta=x+\int_0^t v(x,s)ds.$ The standard compactness
arguments shows that $v$ is a solution to \eqref{E:2.7}.}
Thus, we can prove Theorem \ref{T:1.1} for $2\leq\gamma<3$ and $\gamma=\frac{3}{2}$.
Moreover, in the above process, we find that the method can be extended to all the cases of $1<\gamma<2$, thus
Theorem \ref{T:1.1} can be proved for all the general case $1<\gamma<3.$

Now, we are ready to prove the uniqueness of solutions.
For two solutions $v_1(x,t),v_2(x,t)$ satisfying Theorem \ref{T:1.1} to the free-boundary problem of the compressible Euler equations  \eqref{E:1.1}, we want to prove $v_1(x,t)=v_2(x,t)$.
{In fact, from \eqref{E:2.11}, there exits three positive constants $c_1,c_2,c_3$ such that
\begin{equation}\label{E:2.11a}c_1\leq \partial_x\eta_i(x,t)\le c_2, \ \ \text{and}\ \
|\partial_xv_i(x,t)|\leq c_3, \quad (x,t)\in[0,1]\times [0,T],\quad i=1,2.\end{equation}}
We define $\delta v=v_1-v_2$, then $\delta v$ satisfy the following equation:
\begin{equation}\label{E:9.1}
\omega^{1+2\mu}\partial_t\delta v+\Big[\omega^{2+2\mu}e^{S_0}\Big(\frac{1}{(\eta_1')^\gamma}
-\frac{1}{(\eta_2')^\gamma}\Big)\Big]'=0.
\end{equation}

By considering the fifth differential version of \eqref{E:9.1}, 
from the Proposition \ref{L:7.1}, we have the following equation for $\delta v$:
\begin{equation}\label{E:5.5}\begin{split}
\frac{1}{2}\int_{I} &\omega^{1+2\mu}\big|\partial_t^5 \delta v\big|^2+\frac{\gamma}{2}\int_I\big|\partial_t^4\delta v'\big|^2\frac{\omega^{2+2\mu}}
{(\eta_1')^{\gamma+1}}e^{S_0}+\epsilon\int_I\omega^{2+2\mu}|\partial_t^5\delta v'|^2 e^{S_0}\\
=&-\frac{(\gamma+1)\gamma}{2}\int_0^t\int_I\omega^{2+2\mu}e^{S_0} \Big(\frac{1}{(\eta_1')^{\gamma+2}}v_1'\big|\partial_t^4v_1'\big|^2 -\frac{1}{(\eta_2')^{\gamma+2}}v_2'\big|\partial_t^4v_2'\big|^2\Big)\\
&-\sum_{\alpha=1}^4 c_\alpha\int_0^t\int_I\omega^{2+2\mu}e^{S_0}\Big(\partial_t^\alpha\frac{1}{(\eta_1')^{\gamma+1}}
\partial_t^{4-\alpha}v_1'
\partial^5_t v_1'-\partial_t^\alpha\frac{1}{(\eta_2')^{\gamma+1}}
\partial_t^{4-\alpha}v_2'
\partial^5_t v_2' \Big)\\
&-\gamma\int_I\omega^{2+2\mu}e^{S_0}\Big(\frac{\partial_t^4 v_1'\partial_t^4 v_2'}{(\eta_1')^{\gamma+1}}
+\frac{1}{2}\big(\frac{1}{(\eta_1')^{\gamma+1}}+\frac{1}{(\eta_2')^{\gamma+1}}\big)|\partial_t^4v_2'|^2\Big)\\
&+2\epsilon\int_I\omega^{2+2\mu}\partial_t^5 v_1'\partial_t^5 v_2'e^{S_0}.\\
\end{split}\end{equation}
Using the energy estimate and weighted embedding inequality
\eqref{E:3.1a}, we obtain the analogous  version of  Proposition \ref{L:7.1} and Proposition \ref{L:5.3} for
$\delta v,$ and $\partial_t^4 \delta v\in L^\infty(0,T;L^2(I))$.
Next, we consider the elliptic estimates for higher-order spatial derivatives, $\delta v$ satisfies the following
equation
\begin{equation*}\label{E:9.2}\begin{split}
\big(\omega^{2+2\mu}\partial^k_t \delta v'e^{S_0}\big)'&-\frac{\varepsilon}{\gamma}\partial_t\big(\omega^{2+2\mu}\partial_t^k \delta v'
e^{S_0}\big)'=\delta g,\\
\end{split}\end{equation*}
where
\begin{equation*}\begin{split}
\delta g=&-\frac{1}{\gamma}\omega^{1+2\mu}\partial_t^{k+2}\delta v+\sum_{\alpha=1}^k\Big(c_\alpha\omega^{2+2\mu}e^{S_0}\big(\partial_t^\alpha
\frac{1}{(\eta_1')^{\gamma+1}}\partial^{k-\alpha}_tv_1'-\partial_t^\alpha
\frac{1}{(\eta_2')^{\gamma+1}}\partial^{k-\alpha}_tv_2'\big)\Big)'\\
&+\left[\left(1-\frac{1}{(\eta_1')^{\gamma+1}}\right)\big(
\partial_t^{k}v_1'e^{S_0}\big)'-\left(1-\frac{1}{(\eta_2')^{\gamma+1}}\right)\big(\omega^{2+2\mu}
\partial_t^{k}v_2'e^{S_0}\big)'\right]\\
&-(\gamma+1)\omega^{2+2\mu}e^{S_0}\Big(\frac{\partial_t^k v_1'\eta_1''}{(\eta'_1)^{\gamma+2}}
-\frac{\partial_t^k v_2'\eta_2''}{(\eta'_2)^{\gamma+2}}\Big).\\
\end{split}\end{equation*}
From  $\eta''_i=\int_0^t v_i'' dt$, using \eqref{E:2.11a} to control $\eta_i'$,  by the weighted embedding inequality
\eqref{E:3.1a} and repeating elliptic estimates, we have
$$\sup\limits_{t\in[0,T]}\sum_{s=0}^4
||\partial_t^{s}\delta v||^2_{2-s/2}\leq CT P\big(\sup\limits_{t\in[0,T]}\sum_{s=0}^4
||\partial_t^{s}\delta v||^2_{2-s/2}\big),$$
which shows that $\delta v=0.$

\bigskip

\section*{Acknowledgments}
Y. Geng's research was supported in part by the National Natural Science Foundation of China through grant 11201308 and the innovation program of Shanghai Municipal Education Commission (13ZZ136).
Y. Li's research was supported in part by National Natural Science Foundation of China through grants 11571232 and 11831011.
D. Wang's research was supported in part by the National Science Foundation under grants DMS-1312800 and DMS-1613213.
R. Xu's research was supported in part by the National Natural Science Foundation of China through grant 11471087.


\begin{thebibliography}{SK}




\normalsize
\baselineskip=17pt


\bibitem{danchin} Bahouri. H, Chemin. J and Danchi. R, \textit{Fourier Analysis and Nonlinear
 Partial differential equations}. Springer Heidelburg Dordrecht London New York,  2011.

\bibitem{Bouchut} Bouchut. F, \textit{ Nonlinear Stability of Finite Volume Methods for Hyperbolic Conservation Laws
and Well-Balanced Schemes for Sources}.  Frontiers in Mathematics. Birkh$\ddot{\text{a}}$user, Basel, 2004.


\bibitem{chen} Chen. G.-Q, Remarks on R. J. Diperna's paper:``Convergence of the viscosity method for isentropic
gas dynamics" [Comm. Math. Phys. 91 (1983) \textbf{1}, 1-30].  \textit{ Proc. Amer. Math. Soc.}
\textbf{125} (1997), 2981-2986.

\bibitem{Coutand1} Coutand. D, Lindblad. H and Shkoller. S, A priori estimates for the free-boundary 3D compressible
Euler equations in physical vacuum. \textit{ Commu. Math. Phys.} \textbf{296} (2010), 559-587.

\bibitem{Coutand2} Coutand. D and Shkoller. S, The interaction between quasilinear elastodynamics and the Navier-Stokes
 equations. \textit{Arch. Ration. Mech. Anal.} \textbf{179} (2006), 303-352.

\bibitem{Coutand3} Coutand. D and Shkoller. S, Well-posedness of the free-surface incompressible Euler equations
with or without surface tension. \textit{J. Amer. Math. Soc.} \textbf{20} (2007), 829-930.

\bibitem{Coutand4} Coutand. D and Shkoller. S, Well-posedness in smooth function spaces for the moving-boundary 1-D
 compressible Euler equations in physical vacuum. \textit{Comm. Pure. Appl. Math.} \textbf{LXIV} (2011), 328-366.

\bibitem{Coutand5} Coutand. D and Shkoller. S, Well-posedness in smooth function spaces for the moving-boundary 3-D
 compressible Euler equations in physical vacuum. \textit{Arch. Ration. Mech. Anal.} \textbf{206} (2012), 515-616.

\bibitem{Diperna} Diperna. R. J, Convergence of the viscosity method for isentropic gas dynamics.
\textit{Comm. Math. Phys.} \textbf{91} (1983), 1-30.


\bibitem{Greenspan} Greenspan. H. P and Butler. D. S,  On the expansion of a gas into vacuum.
\textit{J. Fluid Mech.} \textbf{13} (1962), 101-119.

\bibitem{Geng} Geng. Y and Li. Y, Local smooth solutions to the 3-dimensional isentropic
relativistic Euler equations.
\textit{Chin. Ann. Math.} \textbf{35B(2)} (2014), 301-318.

\bibitem{Lei} Gu. X and Lei. Z, Well-posedness of 1-D compressible Euler-Poisson equations with physical vacuum.
\textit{J. Differential Equations} \textbf{252} (2012), 2160-2188.

\bibitem{HLarma}
Hao, C., Luo, T.,  A priori estimates for free boundary problem of
  incompressible inviscid magnetohydrodynamic flows.
   \textit{Arch. Rat. Mech. Anal.}  \textbf{212} (2014), 805-847.


\bibitem{HaoW}
  Hao, C,  Wang, D.,
A priori estimates for the free boundary problem of incompressible neo-Hookean elastodynamics.
 \textit{J. Differential Equations}  \textbf{261} (2016), no. 1, 712-737.



\bibitem{Huang} Huang. F, Marcati. P, and Pan. R, Convergence to the Barenblatt solution for
the compressible Euler equations with damping and vacuum. \textit{Arch. Ration.
Mech. Anal.}  \textbf{176} (2005), 1šC24.

\bibitem{jang1} Jang. J and Masmoudi. N, Well-posedness for compressible Euler equations with physical singularity.
\textit{Commu. Pure Appl. Math.} \textbf{62} (2009), 1327-1385.


\bibitem{jang3} Jang. J and Masmoudi. N, Well and ill-posedness for compressible Euler equations with vacuum.
\textit{J. Math. Phys.} \textbf{53} (2012), 1-11.


\bibitem{jang2} Jang. J and Masmoudi. N, Well-posedness of compressible Euler equations in a physical vacuum.
\textit{Commu. Pure Appl. Math.} \textbf{68} (2015), 61-111.


\bibitem{jang4} Jang. J and Masmoudi. N, Vacuum in gas and fluid dynamics.
\textit{Nonlinear Conservation Laws and Applications,} 315-329, IMA Vol. Math. Appl.
\textbf{153}, Springer, New York, 2011.

\bibitem{jang5} Jang. J, Local well-posedness of dynamics of viscous gaseous stars.
\textit{Arch. Rational. Mech. Anal.} \textbf{153} (2011), 315-329.

 \bibitem{Kato} Kato. T, The Cauchy problem for quasilinear symmetric hyperbolic systems.  \textit{Arch.
Rat. Mech. Anal.}  \textbf{58} (1945), 181-205.

\bibitem{Kufner} Kufner. A, Maligranda. L, Persson. L, \textit{The Hardy inequality.}  Vydavatelsky Servis,
Plzen, 2007. About its history and some related results.

\bibitem{Phlippe}
Lefloch. P and Uka. S, A symmetrization of the relativistic Euler
equations in sevaral spatial variables. {\it Kinet. Relat. Modles.}
{\bf 2} (2009), 275--292.

\bibitem{Lin} Lin. L, On the vacuum state for the equations of isentropic gas dynamics.  \textit{J. Math. Anal. Appl. }  \textbf{121}
(1987), 406-425.

\bibitem{Lind} Lindblad. H, Well posedness for the motion of a compressible liquid with free
surface boundary.  \textit{Comm. Math. Phys. }  \textbf{260}
(2005), 319-392.


\bibitem{Lions} Lions. P-L, Perthame. B and Souganidis. P-E, Existence and stability of entropy solutions for the hyperbolic systems
for the hyperbolic systems of isentropic gas dynamics in Eulerian and Lagrangian coordinates.  \textit{Comm. Pure Appl. Math.}
 \textbf{49} (1996), 599-638.

\bibitem{liu} Liu. T.-P, Compressible flow with damping and vacuum. \textit{Jappan. J. Indust. Appl. Math.} \textbf{13} (1996), 25-32.



 \bibitem{liu1} Liu. T.-P, and J. Smoller, On the vacuum state for the isentropic gas dynamics equations.  \textit{Adv.
in Appl. Math.}  \textbf{1} (1980), 345-359.

 \bibitem{liu3} Liu. T.-P, and Yang. T, Compressible Euler equations with vacuum.  \textit{J.
Differential Equations.}  \textbf{140} (1997), 223-237.

 \bibitem{liu2} Liu. T.-P, and Yang. T, Compressible fluid with vacuum and physical singularity.  \textit{Methods Appl.
Anal.}  \textbf{7} (2000), 495-509.

\bibitem{luo} Luo. T and Yang. T, Interface behavior of  compressible Navier-Stokes equations
with vacuum. \textit{SIAM. J. Math. Anal.} \textbf{31} (2000), 1175-1191.



\bibitem{luo2} Luo. T, Xin. Z and Zeng. H, Well-posedness for the motion of physical vacuum
of three-dimensional compressible Euler equations with or without self-gravitation.
\textit{Arch. Rational. Mech. Anal.} \textbf{213} (2014), 763-831.

\bibitem{luo1} Luo. T and Zeng. H, Global Existence of Smooth Solutions and Convergence
to Barenblatt Solutions for the Physical Vacuum
Free Boundary Problem of Compressible Euler Equations
with Damping. \textit{Comm. Pure Appl. Math.} \textbf{69}(7) (2016), 1154-1396.

\bibitem{amj} Majda. A, \textit{Compressible fluid flow and systems of conservation laws in several space variables}. Applied Mathematical Science \textbf{53}, Spinger-Verlag: New York, Berlin Heidelberg,  1986.

\bibitem{tms1} Makino. T,  Ukai. S and  Kawashima. S, Sur la solution $\grave{\text{a}}$ support compact de equations d'Euler compressible. \textit{Japan J.  Appl.  Math.} \textbf{33} (1986),  249-257.


\bibitem{makino}  Makino. T, On a local existence theorem for the evolution of gaseous stars. \textit{Pattens and Waves}
(eds. T. Nishida and H. Fujii), North Holland/Kinokuniya, \textbf{18} (1986),  459-479.


\end{thebibliography}
\end{document}